\theoremstyle{plain}
\newtheorem{thm}{Theorem}[section]
\newtheorem{lem}[thm]{Lemma}
\newtheorem{prop}[thm]{Proposition}
\newtheorem{cor}[thm]{Corollary}
\theoremstyle{definition}
\newtheorem*{Ack}{Acknowledgement}
\newtheorem{deff}[thm]{Definition}
\newtheorem{remark}[thm]{Remark}
\newtheorem{question}[thm]{Question}
\theoremstyle{remark}
\def\k{\ensuremath{\bold{k}}}
\def\U{\mathcal{U}}
\newcommand*{\Hom}{\ensuremath{\text{\upshape Hom}}}
\newcommand*{\Homology}{\ensuremath{H}}
\newcommand*{\RHom}{\ensuremath{\text{\upshape RHom}}}
\newcommand*{\HP}{\ensuremath{HP}}
\newcommand*{\HH}{\ensuremath{HH}}
\newcommand*{\gr}{\ensuremath{\text{\upshape gr}}}
\newcommand*{\Ext}{\ensuremath{\text{\upshape Ext}}}
\newcommand*{\Tor}{\ensuremath{\text{\upshape Tor}}}
\newcommand*{\Spec}{\ensuremath{\text{\upshape Spec}}}
\newcommand*{\Aut}{\ensuremath{\text{\upshape Aut}}}
\newcommand*{\Der}{\ensuremath{\text{\upshape Der}}}
\newcommand*{\Pic}{\ensuremath{\text{\upshape Pic}}}
\newcommand*{\Id}{\ensuremath{\text{\upshape Id}}}
\def\dim{\operatorname{dim}}
\begin{document}
\thispagestyle{empty}

\title{Homological unimodularity and Calabi-Yau condition for Poisson algebras}

\author{Jiafeng L\"u}
\address{L\"u: Department of Mathematics, Zhejiang Normal University, Jinhua, Zhejiang 321004, P.R. China}
\email{jiafenglv@zjnu.edu.cn, jiafenglv@gmail.com}

\author{Xingting Wang}
\address{Wang: Department of Mathematics, Temple University, Philadelphia 19122, USA }
\email{xingting@temple.edu}

\author{Guangbin Zhuang}
\address{Zhuang: Department of Mathematics,
University of Southern California, Los Angeles 90089-2532, USA}
\email{gzhuang@usc.edu}

\begin{abstract}
In this paper, we show that the twisted Poincar\'e duality between Poisson homology and cohomology can be derived from the Serre invertible bimodule. This gives another definition of a unimodular Poisson algebra in terms of its Poisson Picard group. We also achieve twisted Poincar\'e duality for Hochschild (co)homology of Poisson bimodules using rigid dualizing complex. For a smooth Poisson affine variety with the trivial canonical bundle, we prove that its enveloping algebra is a Calabi-Yau algebra if the Poisson structure is unimodular.
\end{abstract}
\subjclass[2010]{16E40, 17B35, 17B63}
\keywords{Poisson algebra, Calabi-Yau algebra, Hochschild (co)homology, Poisson (co)homology,  dualizing complex}
\maketitle

\section{Introduction}
Poisson geometry is originated in classical mechanics where one describes the time evolution of a mechanical system by solving HamiltonÕs equations in terms of the Hamiltonian vector field.  This inspires the definition of a Poisson manifold $M$ which is equipped with a Lie bracket (called Poisson bracket) on the algebra $C^\infty(M)$ of smooth functions on $M$ subject to the Leibniz rule. From an algebraic point of view, the structure of a Poisson bracket is understood through the following definition of a Poisson algebra.
\begin{deff}
A Poisson algebra is a commutative algebra $A$ over a base field $k$, which is equipped with a bilinear map $\{-,-\}: A\otimes_k A\to A$ satisfying
\begin{enumerate}
\item skew symmetry: $\{a,b\}=-\{b,a\}$,
\item Jacobi identity: $\{a,\{b,c\}\}+\{b,\{c,a\}\}+\{c,\{a,b\}\}=0$,
\item Leibniz rule: $\{ab,c\}=a\{b,c\}+\{a,c\}b$,
\end{enumerate}
for all $a,b,c\in A$.
\end{deff}
Recently, the development of Poisson geometry has deeply entangled with noncommutative algebras and noncommutative geometry. For instance, in the deformation quantization of a Poisson algebra, the Poisson (co)homology of the Poisson algebra and the Hochschild (co)homology of its deformation quantization are connected by the Brylinsky spectral sequence \cite{Bry88, EG}. As an application, Van den Bergh \cite{VdB94}, Marconnet \cite{Mar04} and Berger-Pichereau \cite{BP11} computed the Hochschild homology of some three-dimensional Calabi-Yau algebras, by considering them as deformations of polynomial Poisson algebras with three variables  respectively and computing the corresponding Poisson homology.

This prompts us to study the representation theory of a Poisson algebra $A$. Let us first recall what happens in Poisson manifold. Let $E\to M$ be a vector bundle on a Poisson Manifold $M$, and let $\Gamma^\infty (E)$ be the space of smooth sections of $E$, regarded as a right finitely generated projective module over $C^\infty(M)$. Then any Poisson vector bundle structure on $E\to M$ is a Poisson $\Gamma^\infty(M)$-module structure on $\Gamma^\infty(E)$, or equivalently, it is a flat contravariant connection on $E$. Now we provide its algebraic version.

\begin{deff}\label{PMod}\cite{Oh} Let $A$ be a Poisson algebra over a base field $k$. A left {\it Poisson module} $M$ over $A$ is a left $A$-module with a linear map
$$\{-, -\}_M: A\otimes_k M\rightarrow M$$
satisfying
\begin{enumerate}
\item $\{ \{a, b\}_A, m\}_M=\{a, \{b, m\}_M\}_M-\{b, \{a, m\}_M\}_M,$
\item $\{ab, m\}_M=a\{b, m\}_M+b\{a, m\}_M$,
\item $\{a, bm\}_M=\{a, b\}m+b\{a, m\}_M$,
\end{enumerate}
for any $a, b\in A$ and $m\in M$. For two left Poisson $A$-modules $M$ and $N$, a {\it Poisson module morphism} $f: M\rightarrow N$ is an $A$-module map $f$ such that
$$f(\{a, m\}_M)=\{a, f(m)\}_N$$ for any $a\in A$ and $m\in M$. We define $A^{op}$ to be the {\it opposite Poisson algebra} of $A$, where $A^{op}=A$ as commutative algebras and $\{-,-\}_{A^{op}}=-\{-,-\}_A$. Similarly, one can define right Poisson $A$-modules to be left Poisson $A^{op}$-modules. We denote by $A\text{-PMod}$ (resp. $A^{op}\text{-PMod}$) the category of left (resp. right) Poisson modules over $A$.
\end{deff}

The first important result of the representation theory of a Poisson algebra $A$ is the following equivalence of categories, which enables us to express the Poisson homology and cohomology in terms of the torsion and extension groups via the universal enveloping algebra of $A$; see Definition \ref{UnivP}.
\begin{thm}\cite[Corollary 1]{Um}\label{T:Equi}
There is an equivalence of categories $A\text{-PMod}\equiv \U(A)\text{-Mod}$, where $\U(A)$ is the enveloping algebra of the Poisson algebra $A$.
\end{thm}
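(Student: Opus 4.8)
The plan is to upgrade the statement to an honest isomorphism of categories by transporting structure along the defining relations of $\U(A)$. Recall from Definition~\ref{UnivP} that $\U(A)$ is generated over $k$ by two linear copies of $A$, written $a\mapsto m_a$ and $a\mapsto h_a$, subject to the relations making $m\colon A\to\U(A)$ a unital algebra map, making $h\colon A\to\U(A)$ a Lie-algebra map for the bracket, and intertwining the two via
\[
h_a m_b-m_b h_a=m_{\{a,b\}},\qquad h_{ab}=m_a h_b+m_b h_a ;
\]
equivalently, $\U(A)$ is the universal associative $k$-algebra receiving such a pair of maps from $A$. The key observation, which drives everything, is that this list of relations is precisely the operator-level transcription of the Poisson-module axioms (1)--(3) of Definition~\ref{PMod}: $m_{\{a,b\}}=h_a m_b-m_b h_a$ is axiom (3), $h_{ab}=m_a h_b+m_b h_a$ is axiom (2), and $h_{\{a,b\}}=h_ah_b-h_bh_a$ is axiom (1).

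Concretely, I would define $F\colon\U(A)\text{-Mod}\to A\text{-PMod}$ by equipping a left $\U(A)$-module $M$ with the $A$-module structure $a\cdot x:=m_ax$ and the bracket action $\{a,x\}_M:=h_ax$; the displayed relations together with $m_{ab}=m_am_b$ then say exactly that $(M,\{-,-\}_M)$ is a left Poisson $A$-module, and a $\U(A)$-linear map is automatically $A$-linear and commutes with every $h_a$, hence is a Poisson-module morphism. In the other direction, $G\colon A\text{-PMod}\to\U(A)\text{-Mod}$ sends a Poisson module $M$ to the $\U(A)$-module determined by the algebra homomorphism $\U(A)\to\End_k(M)$ obtained from the universal property applied to the two $k$-linear maps $a\mapsto(x\mapsto ax)$ and $a\mapsto(x\mapsto\{a,x\}_M)$; axioms (1)--(3) are exactly what is needed for these to satisfy the defining relations, and a Poisson-module morphism intertwines both, hence is $\U(A)$-linear since the $m_a$ and $h_a$ generate $\U(A)$. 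Finally, $F$ and $G$ leave the underlying $k$-vector space and all structure maps untouched, merely repackaging them, so $F\circ G$ and $G\circ F$ are the identity; in fact one obtains an isomorphism of categories, a priori stronger than the asserted equivalence.

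The only genuine work is the bookkeeping in the first paragraph: one must pin down the presentation of $\U(A)$ in Definition~\ref{UnivP} and check that its relations are in exact bijection with axioms (1)--(3), with none redundant and none missing, so that ``satisfies the relations of $\U(A)$'' and ``is a Poisson $A$-module'' name literally the same structure on a $k$-vector space. Once that dictionary is fixed, functoriality of $F$ and $G$ and the inverse property are formal, and this is essentially the content of \cite[Corollary~1]{Um}; I would cite that reference for the verification and keep the explicit functors $F$ and $G$ on record, since they are what let us re-express Poisson (co)homology as $\Tor^{\U(A)}$ and $\Ext_{\U(A)}$ in the sequel.
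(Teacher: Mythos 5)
Your proposal is correct and is essentially the argument the paper relies on: the paper simply cites \cite[Corollary 1]{Um} and records exactly this translation (after Definition \ref{UnivP}, a Poisson module $M$ becomes a $\U(A)$-module via $a\cdot m=am$, $H_a\cdot m=\{a,m\}_M$), which is the relation-by-relation dictionary you spell out. Your observation that one in fact gets an isomorphism, not merely an equivalence, of categories is consistent with this and needs no further justification.
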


%In order to understand the homological behavior of a Poisson algebra $A$, the bounded derived category of $A\text{-PMod}$ becomes the core of our interests. There are several basic questions we like to first ask before we turn to the main objects of our paper.
%\begin{enumerate}
%\item When are two Poisson algebras $A$ and $B$ bounded derived equivalent, i.e.,
%\[\text{D}^{b}(A\text{-PMod})\equiv \text{D}^{b}(B\text{-PMod})?\]
%\item When is a Poisson algebra $A$ bounded derived equivalent to a finite-dimensional algebra $R$, i.e.,
%\[\text{D}^{b}(A\text{-PMod})\equiv \text{D}^{b}(R\text{-Mod})?\]
%\end{enumerate}
%We will see soon any Poisson algebra is always bounded equivalent to its enveloping algebra (or precisely it is Morita equivalence). But the enveloping algebra is, most of the time, infinite-dimensional. Hence any answer to our second question will be able to bring us back to the classical picture of the representation of finite-dimensional algebras.

Regarding the homological behavior of a Poisson algebra, the phenomena of twisted Poincar\'e duality between Poisson homology and cohomology has been widely observed in many Poisson structures, i.e., polynomial Poisson algebras with quadratic Poisson structures \cite{LR07}, affine Poisson algebras \cite{Zhu} and later for any polynomial Poisson algebras \cite{LWW}. This twisted duality enables us to decode the rich information of Poisson structures, that is carried by Poisson cohomology but hard to compute, through Poisson homology that is sometimes more computable.

In the light of Proposition \ref{P:THPoisson}, we see that the representation category of a Poisson algebra $A$ is a monoidal category, where the tensor product is given by $\otimes_A$ and the identity object is the trivial Poisson module $A$. Moreover, for any Poisson module $M$ that is a line bundle over $A$ (locally free of rank one), we have its left and right dual given by $M^*:=\Hom_A(A,M)$; see \Cref{RankonePMod}. This yields the definition of the Poisson Picard group of $A$ in the following way.

\begin{deff}\label{D:PicardG}
Let $A$ be a Poisson algebra. We define the {\it Poisson Picard group} of $A$, $\Pic_P(A)$ to be the set of isomorphism classes of Poisson modules that are line bundles over $A$, with multiplication given by $\otimes_A$ and inverses given by $M\to M^*$.
\end{deff}

Throughout the paper, we are interested in affine smooth Poisson algebra $A$. In this case, the differential forms of maximal degree $\omega_A=\Omega_A^\ell$ for $\ell=\dim A$ is a line bundle over $A$. Moreover, its inverse is given by $\omega_A^*=\Ext_{A\otimes A}^\ell(A,A\otimes A)$, which turns out to be the Serre invertible bimodule; see subsection \ref{S:Rigid}.  According to Lemma \ref{PAExt}, both $\omega_A$ and $\omega_A^*$ are equipped with Poisson module structures; and hence belong to the Poisson Picard group $\Pic_P(A)$. The following theorem shows that $\omega_A^*$ plays an important role in the twisted Poincar\'e duality for Poisson homology and cohomology.

\begin{thm}[Theorem \ref{TwistedP}]\label{Intro:T1}
Let $A$ be an affine smooth Poisson algebra of dimension $\ell$. Then for any right Poisson $A$-module $M$, we have
\[
\HP^{i}(M)=\HP_{\ell-i}(M\otimes_A \omega_A^*),
\]
where $\HP^i(M)$ is the $i$-th Poisson cohomology of $A$ with values in $M$, and $\HP_{\ell-i}(M)$ is the $(\ell-i)$-th Poisson homology of $A$ with values in the tensor product of Poisson modules $M\otimes_A\omega^*$.
\end{thm}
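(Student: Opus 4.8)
The plan is to pass to the Poisson enveloping algebra $\U(A)$, express both sides of the asserted identity as derived functors over $\U(A)$, and deduce it from a self-duality of the trivial Poisson module $A$. By Theorem \ref{T:Equi} we identify Poisson modules with $\U(A)$-modules; since $A$ is commutative, a left Poisson module becomes a right one upon negating the action $\{a,m\}\mapsto-\{a,m\}$, so left and right $\U(A)$-modules are interchangeable and $M$ may be viewed on whichever side is convenient. As recalled after Theorem \ref{T:Equi}, $\HP^i(M)=\Ext^i_{\U(A)}(A,M)$ and $\HP_j(N)=\Tor^{\U(A)}_j(N,A)$ for $A$ the trivial Poisson module, so the theorem is equivalent to
$$\Ext^i_{\U(A)}(A,M)\;\cong\;\Tor^{\U(A)}_{\ell-i}\!\bigl(M\otimes_A\omega_A^*,\,A\bigr).$$

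First I would resolve $A$ over $\U(A)$. The Poisson bracket makes $(A,\Omega_A^1)$ a Lie--Rinehart algebra with anchor $da\mapsto\{a,-\}$, and $\U(A)$ is its enveloping algebra, so Rinehart's theorem supplies the Chevalley--Eilenberg resolution $\U(A)\otimes_A\wedge_A^{\bullet}\Omega_A^1\twoheadrightarrow A$. Since $A$ is affine and smooth of dimension $\ell$, $\Omega_A^1$ is finitely generated projective of rank $\ell$, hence each $\wedge_A^k\Omega_A^1$ is finitely generated projective over $A$ and vanishes for $k>\ell$; thus $A$ is a perfect $\U(A)$-module with a resolution by finitely generated projectives of length at most $\ell$. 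Applying $\Hom_{\U(A)}(-,M)$ to this resolution recovers the Lichnerowicz complex computing $\HP^{\bullet}(M)$, and applying $(-)\otimes_{\U(A)}A$ recovers the Koszul--Brylinski complex for $\HP_{\bullet}$.

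The crux is to establish $\RHom_{\U(A)}(A,\U(A))\simeq\omega_A^*[-\ell]$ as complexes of right $\U(A)$-modules, where $\omega_A^*=\Ext^\ell_{A\otimes A}(A,A\otimes A)=\wedge^\ell_A\Der(A)$ is the Serre invertible bimodule endowed with the Poisson-module structure of Lemma \ref{PAExt}. This has three ingredients: (i) the cohomology of $\RHom_{\U(A)}(A,\U(A))$ is concentrated in degree $\ell$ --- vanishing above $\ell$ comes from the length-$\ell$ resolution, and vanishing below $\ell$ from regularity of $\U(A)$, which holds because $\U(A)$ is filtered with $\gr\U(A)\cong\mathrm{Sym}_A\Omega_A^1$, a smooth commutative algebra of dimension $2\ell$; (ii) dualizing the Rinehart resolution into $\U(A)$ and using finite projectivity of the $\wedge_A^k\Omega_A^1$ identifies $\Ext^\ell_{\U(A)}(A,\U(A))$ with $\Hom_A(\wedge^\ell_A\Omega_A^1,A)=\omega_A^*$ as $A$-modules; (iii) the right $\U(A)$-module structure so induced on $\Ext^\ell_{\U(A)}(A,\U(A))$ coincides with the Poisson-module structure of Lemma \ref{PAExt} on the Serre bimodule $\omega_A^*$. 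Granting all this, perfectness of $A$ yields the standard identity $\RHom_{\U(A)}(A,M)\simeq\RHom_{\U(A)}(A,\U(A))\otimes^{\mathrm L}_{\U(A)}M\simeq\bigl(\omega_A^*\otimes^{\mathrm L}_{\U(A)}M\bigr)[-\ell]$, hence $\Ext^i_{\U(A)}(A,M)\cong\Tor^{\U(A)}_{\ell-i}(\omega_A^*,M)$.

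It remains to rewrite this last group as the Poisson homology of a tensor module. Since $A\text{-PMod}$ is monoidal under $\otimes_A$ with unit $A$ (Proposition \ref{P:THPoisson}), the Lie--Rinehart analogue of the $\Tor$-reduction over an enveloping algebra gives $\omega_A^*\otimes^{\mathrm L}_{\U(A)}M\simeq(\omega_A^*\otimes_A M)\otimes^{\mathrm L}_{\U(A)}A$, where $\omega_A^*\otimes_A M=M\otimes_A\omega_A^*$ carries the tensor Poisson-module structure (here one uses $A$-flatness of the line bundle $\omega_A^*$). Taking homology, $\Tor^{\U(A)}_{\ell-i}(\omega_A^*,M)\cong\Tor^{\U(A)}_{\ell-i}(M\otimes_A\omega_A^*,A)=\HP_{\ell-i}(M\otimes_A\omega_A^*)$, which is the claim. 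The principal obstacle is ingredient (iii): matching the $\U(A)$-action on $\Ext^\ell_{\U(A)}(A,\U(A))$ with the intrinsic Poisson structure of the Serre bimodule $\omega_A^*$ is exactly what pins the twist of the duality to $\omega_A^*$ and nothing more, whereas the surrounding homological manipulations are routine.
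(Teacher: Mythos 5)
Your overall route is the same as the paper's: identify $\HP^\bullet$ and $\HP_\bullet$ with $\Ext$ and $\Tor$ over $\U(A)$ (Proposition \ref{Poissonext}), use perfectness of $A$ over $\U(A)$ coming from the Rinehart resolution to write $\RHom_{\U(A)}(A,M)\simeq\RHom_{\U(A)}(A,\U(A))\otimes^{L}_{\U(A)}M$, reduce everything to the identification $\RHom_{\U(A)}(A,\U(A))\simeq\omega_A^*[-\ell]$ with its Poisson (right $\U(A)$-module) structure, and finally convert $\Tor^{\U(A)}_{\ell-i}(\omega_A^*,M)$ into $\Tor^{\U(A)}_{\ell-i}(M\otimes_A\omega_A^*,A)$ using that $(-)\otimes_A\omega_A^*$ is an equivalence (Lemma \ref{EquivMod}); those outer steps match the paper's proof of Theorem \ref{TwistedP}. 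The genuine gap is that the step you yourself call the crux is never proved: in the paper it is Proposition \ref{ExtEq}, whose proof is a real computation, not a formality. Locally one dualizes the Rinehart resolution, computes the top differential in a regular system of parameters $x_1,\dots,x_\ell$, and shows its image is the right ideal generated by the elements $H_{x_i}-\delta(x_i)$, where $\delta$ is the local modular derivation of Lemma \ref{LMDF}; hence $\Ext^\ell_{\U(A_p)}(A_p,\U(A_p))\cong A_p^{\delta}\cong\omega_{A_p}^*$ as right Poisson modules, and one globalizes by the explicit map $\Hom_A(\omega_A,A)\hookrightarrow\Hom_A(\omega_A,\U(A))\twoheadrightarrow\Ext^\ell_{\U(A)}(A,\U(A))$ checked to be an isomorphism by localization (Lemma \ref{Local}). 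Your ingredient (ii) does not follow from finite projectivity alone: a priori $\Ext^\ell_{\U(A)}(A,\U(A))$ is a quotient of $\Hom_A(\wedge^\ell\Omega_A,\U(A))$, which is much larger than $\omega_A^*$, and identifying the image of the dualized differential is exactly where the modular derivation enters; your ingredient (iii), which you concede is ``the principal obstacle,'' is settled by that same computation. Since you ``grant all this,'' the heart of the theorem is missing from your argument.

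A secondary error: your justification of ingredient (i) is wrong as stated. Regularity of $\U(A)$ (finite global dimension, here $2\ell$) gives vanishing of $\Ext^i_{\U(A)}(A,\U(A))$ above the global dimension, not for $i<\ell$. The lower vanishing is a grade statement: one passes to $\gr_{\mathscr F}\U(A)\cong S_A(\Omega_A)$ (Proposition \ref{PBWP}), where locally $A_p$ is cut out by the regular sequence $dx_1,\dots,dx_\ell$, so $\Ext^i$ into the graded ring vanishes for $i\neq\ell$ by the Koszul complex; one then lifts this through the filtration spectral sequence and localizes over $\Spec(A)$, exactly as in the proof of Proposition \ref{ExtEq}. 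So even the concentration-in-degree-$\ell$ part of your sketch needs the graded/local argument rather than ``regularity.''
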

In particular when $A$ has trivial canonical bundle, or $A$ is a commutative Calabi-Yau algebra by Proposition \ref{ComCY}, we can recover the modular derivation of $A$ \cite[\S 2.2]{LWW} from the Poisson module structure on $\omega_A$ by Lemma \ref{L:APDer}. In this case, $M\otimes_A \omega_A^*=M^{\delta}$, where $M^{\delta}$ denotes the twisted Poisson module of $M$ by the modular derivation $\delta$ of $A$; refer to Corollary \ref{C:TwistD}.

Our next goal is to explore the unimodularity of a Poisson algebra. In the seminal paper \cite{Wein}, Weinstein introduced a notion of modular class of a smooth real Poisson manifold $M$, which belongs to the $1$-th Poisson cohomology group $\HP^1(M)$. The notion was independently introduced by Brylinski-Zuckerman \cite{BZ99} in the context of complex analytic Poisson manifold. We say a Poisson manifold is unimodular if the modular class of $M$ equals zero in $\HP^1(M)$. Later in \cite{Xu99}, Xu proved that any Poisson manifold that is unimodular satisfies  Poincar\'e duality between Poisson homology and cohomology.

Let us return to the algebraic setting. Let $A$ be a smooth Poisson algebra that has trivial canonical bundle. When we regard the unimodularity of $A$, the $1$-th Poisson cohomology group
\[\HP^1(A)=\{\text{Poisson derivations} \}/\{\text{Hamiltonian derivations}\}\]
needs to be replaced by
\[\mathscr{HP}^1(A)=\{\text{Poisson derivations} \}/\{\text{log-Hamiltonian derivations}\}.\]
Then we can say that $A$ is unimodular if the modular class of $A$, which is represented by the modular derivation of $A$, equals zero in $\mathscr{HP}^1(A)$. See \cite{Dol,LWW}. Our Theorem \ref{Intro:T1} reveals a closed connection between unimodularity and (untwisted) Poincar\'e duality between Poisson homology and cohomology. Hence we provide a version of unimodularity in terms of the Poisson Picard group.
\begin{deff}[Definition \ref{Hunimod}]
Let $A$ be an affine smooth Poisson algebra. We say $A$ is {\it homologically unimodular} if the class of $\omega_A$ equals zero in the Poisson Picard group $\Pic_P(A)$.
\end{deff}
%Therefore in view of Theorem \ref{Intro:T1}, homological unimodularity directly implies Poincar\'e duality between Poisson homology and cohomology.
Finally, we study the connection between unimodularity of a Poisson algebra and Calabi-Yau condition of its enveloping algebra. Grothendieck in \cite{Hart} introduced dualizing complex to develop duality theory for singular curves. The noncommutative version of the dualizing complex was introduced by Yekutieli in \cite{Yek}, and it becomes one of the standard homological tools of noncommutative ring theory. Van den Bergh later defined rigid dualizing complex for any noetherian ring \cite{VDB}. It appears that the existence of the rigid dualizing complex has closed relationship with Calabi-Yau algebra defined by Ginzburg \cite{Gin}. See Proposition \ref{CYRigid}. The following result is obtained by applying the formula of the rigid dualizing complex for algebras of generalized differential operators \cite{Chem99}.

\begin{thm}[Theorem \ref{RigidDCP}]\label{Intro:T2}
Let $A$ be a smooth affine Poisson algebra of dimension $\ell$. The rigid dualizing complex of the enveloping algebra $\U(A)$ is
\[
\U(A)\otimes_A\mathcal L_A\,[2\ell],
\]
where $\mathcal L_A:=\omega_A\otimes_A\omega_A$.
\end{thm}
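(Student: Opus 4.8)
The plan is to recognize $\U(A)$ as an algebra of generalized differential operators over $A$ and to feed it into Chemla's formula \cite{Chem99} for the rigid dualizing complex of such algebras. Recall from Definition \ref{UnivP} that $\U(A)$ can be described as the enveloping algebra of the Lie--Rinehart algebra (cotangent Lie algebroid) $(A,\Omega^1_A)$ whose anchor $\Omega^1_A\to\Der(A)$ carries $da$ to the Hamiltonian derivation $\{a,-\}$ and whose bracket is the Koszul bracket determined by $[da,db]=d\{a,b\}$; under this identification $\Omega^1_A$ sits in degree one of the canonical filtration of $\U(A)$. Since $A$ is affine smooth of dimension $\ell$, the $A$-module $\Omega^1_A$ is finitely generated projective of rank $\ell$, and the Poincar\'e--Birkhoff--Witt theorem for Lie--Rinehart algebras yields $\gr\U(A)\cong\mathrm{Sym}_A(\Omega^1_A)$. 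The right-hand side is the coordinate ring of a smooth affine variety of dimension $2\ell$ (the total space of the tangent bundle of $\Spec A$), so the standard lifting results for filtered rings show that $\U(A)$ is noetherian, Auslander regular, and Cohen--Macaulay of Gelfand--Kirillov dimension $2\ell$; in particular it possesses a rigid dualizing complex and Chemla's framework applies.

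It then remains to read off the twist. For the enveloping algebra $V(A,L)$ of a Lie--Rinehart algebra with $A$ affine smooth of dimension $d$ and $L$ projective of rank $m$, Chemla's formula identifies the rigid dualizing complex with
\[
V(A,L)\otimes_A\Bigl(\textstyle\bigwedge^m_A L\otimes_A\omega_A\Bigr)[\,d+m\,],
\]
the bimodule structure on the twisting term being induced by the $L$-action on $\bigwedge^m_A L$ together with the canonical action on $\omega_A$. As a consistency check, for $L=\Der(A)$, so $V(A,L)=\mathcal D_A$, one has $\bigwedge^\ell_A\Der(A)\cong\omega_A^{-1}$ and the formula collapses to the known answer $\mathcal D_A[\,2\ell\,]$; for $A=\field$ and $L=\g$ a Lie algebra it recovers the twist of $U(\g)$ by $\operatorname{tr}\circ\ad$. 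Applying it with $L=\Omega^1_A$ and $d=m=\ell$, and using the identification $\bigwedge^\ell_A\Omega^1_A=\Omega^\ell_A=\omega_A$, we conclude that the rigid dualizing complex of $\U(A)$ is
\[
\U(A)\otimes_A(\omega_A\otimes_A\omega_A)[\,2\ell\,]\;=\;\U(A)\otimes_A\mathcal L_A[\,2\ell\,].
\]
Here the $\U(A)$-bimodule structure on $\U(A)\otimes_A\mathcal L_A$ is the one arising from the Poisson module structure on $\mathcal L_A=\omega_A\otimes_A\omega_A$ supplied by Lemma \ref{PAExt} and the monoidal structure of $A\text{-PMod}$ (Proposition \ref{P:THPoisson}), transported through the equivalence of Theorem \ref{T:Equi}.

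The difficulty here is bookkeeping rather than conceptual. One must match the conventions of \cite{Chem99} with ours: on which side the dualizing complex is taken, the precise description of the ``canonical module'' $\bigwedge^m_A L\otimes_A\omega_A$ as a $V(A,L)$-bimodule, and the homological shift. One must also check that Chemla's result does not tacitly require the anchor $\Omega^1_A\to\Der(A)$ to be an isomorphism, since that holds only in the symplectic case whereas we need the formula for an arbitrary Poisson structure; the $\omega_A^{\otimes 2}$ twist is precisely what separates this case from $\mathcal D_A$, reflecting that $\gr\U(A)$ is the coordinate ring of the tangent bundle (canonical class $\omega_A^{\otimes 2}$) rather than of the symplectic cotangent bundle. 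Finally, the filtered--graded transfer of Auslander regularity and Cohen--Macaulayness to $\U(A)$ should be carried out explicitly, reducing to the corresponding properties of the smooth $2\ell$-dimensional algebra $\mathrm{Sym}_A(\Omega^1_A)$.
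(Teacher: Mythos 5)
Your proposal follows essentially the same route as the paper: identify $\U(A)$ with the enveloping algebra of the Lie--Rinehart pair $(A,\Omega_A)$ (anchor $da\mapsto\{a,-\}$, bracket $[da,db]=d\{a,b\}$) and invoke Chemla's rigid dualizing complex formula \cite[Theorem 4.4.1]{Chem99}, with the paper writing the twist as $\Hom_A(\omega_A^*,\omega_A)$, which is exactly your $\wedge^\ell_A\Omega_A\otimes_A\omega_A=\omega_A\otimes_A\omega_A=\mathcal L_A$, and the shift $[2\ell]$ agreeing with your $[d+m]$. The structural hypotheses you flag (PBW, noetherianity, Auslander regularity via the filtration) are handled in the paper by Proposition \ref{PBWP} and Corollary \ref{PropAPE}, so your argument is correct and matches the paper's proof.
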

Therefore the special class $\omega_A$ in the Poisson Picard group $\Pic_P(A)$ contributes in the rigid dualizing complex of $\U(A)$ by a factor of two. As a consequence, the class $\mathcal L_A\in \Pic_P(A)$ is involved in the twisted Poincar\'e duality between Hochschild homology and cohomology of $\U(A)$. Indeed, $\mathcal L_A^*\otimes_A\U(A)$ turns out to be the Van den Bergh invertible bimodule \cite{VDB2} due to Corollary \ref{VDBDualityH}. Moreover,  $\U(A)$ is a Calabi-Yau algebra if the Poisson structure of $A$ is unimodular.

\begin{cor}[\Cref{SCYP}\&\Cref{CYUni}]
Let $A$ be a Calabi-Yau Poisson algebra. Then the enveloping algebra $\U(A)$ is skew Calabi-Yau of dimension $2\ell$ with Nakayama automorphism given by $2\delta$, where $\delta$ is the modular derivation of $A$. Moreover, $\U(A)$ is Calabi-Yau if the Poisson structure of $A$ is unimodular.
\end{cor}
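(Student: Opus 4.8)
The plan is to deduce both statements from the description of the rigid dualizing complex of $\U(A)$ in Theorem \ref{Intro:T2}, together with the characterization of (skew) Calabi-Yau algebras via rigid dualizing complexes in Proposition \ref{CYRigid}. First I would note that, since $A$ is smooth affine of dimension $\ell$, its enveloping algebra $\U(A)$ is Noetherian of finite global dimension $2\ell$ (it carries a filtration whose associated graded is a smooth commutative algebra of dimension $2\ell$, as already used for Theorem \ref{Intro:T2}), hence homologically smooth. This puts us in the situation where Proposition \ref{CYRigid} applies: $\U(A)$ is skew Calabi-Yau of dimension $d$ with Nakayama automorphism $\mu$ precisely when its rigid dualizing complex is isomorphic, as a complex of $\U(A)$-bimodules, to ${}^{\mu}\U(A)^{1}[d]$, and it is Calabi-Yau exactly when $\mu$ can be chosen inner. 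By Theorem \ref{Intro:T2} the rigid dualizing complex is $\U(A)\otimes_A\mathcal L_A\,[2\ell]$ with $\mathcal L_A=\omega_A\otimes_A\omega_A$, so the whole problem reduces to identifying the $\U(A)$-bimodule $\U(A)\otimes_A\mathcal L_A$ (equivalently, by Corollary \ref{VDBDualityH}, the Van den Bergh invertible bimodule of $\U(A)$).

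Next I would use that $A$, being a Calabi-Yau Poisson algebra, is in particular a commutative Calabi-Yau algebra (Proposition \ref{ComCY}), so $\omega_A$ is free of rank one as an $A$-module; by Lemma \ref{L:APDer} its Poisson module structure is that of the trivial Poisson module $A$ twisted by the modular derivation $\delta$, that is, $\omega_A\cong A^{\delta}$ in $\Pic_P(A)$. Since twisting by a Poisson derivation is additive with respect to the monoidal product $\otimes_A$ of \Cref{RankonePMod}, this gives $\mathcal L_A=\omega_A\otimes_A\omega_A\cong A^{2\delta}$ as Poisson $A$-modules, where $2\delta$ denotes the Poisson derivation $a\mapsto 2\delta(a)$.

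The remaining, and main, ingredient is a bimodule identification: for any Poisson derivation $\eta$ of $A$ one should have $\U(A)\otimes_A A^{\eta}\cong{}^{\psi_\eta}\U(A)^{1}$ as $\U(A)$-bimodules, where $\psi_\eta$ is the algebra automorphism of $\U(A)$ that restricts to the identity on $A$ and sends each Hamiltonian generator $h_a$ (in the notation of Definition \ref{UnivP}) to $h_a+\eta(a)$. A direct check on the generators and defining relations of $\U(A)$ shows $\psi_\eta$ is a well-defined automorphism with inverse $\psi_{-\eta}$, and that the canonical left $\U(A)$-module isomorphism $\U(A)\otimes_A A^{\eta}\xrightarrow{\ \sim\ }\U(A)$ coming from $A^{\eta}=A$ as $A$-modules carries the right $\U(A)$-action — which is built from the twisted Poisson structure of $A^{\eta}$, not merely its $A$-module structure — onto the right regular action precomposed with $\psi_\eta$. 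Taking $\eta=2\delta$ and substituting into Theorem \ref{Intro:T2}, the rigid dualizing complex of $\U(A)$ becomes ${}^{\psi_{2\delta}}\U(A)^{1}[2\ell]$, so Proposition \ref{CYRigid} gives that $\U(A)$ is skew Calabi-Yau of dimension $2\ell$ with Nakayama automorphism $\psi_{2\delta}=\psi_{\delta}^{2}$, the automorphism denoted $2\delta$ in the statement. Finally, if the Poisson structure of $A$ is unimodular, so that the class of $\omega_A$ vanishes in $\Pic_P(A)$ (this is homological unimodularity, Definition \ref{Hunimod}), then so does the class of $\mathcal L_A=\omega_A\otimes_A\omega_A$; therefore $\U(A)\otimes_A\mathcal L_A\cong\U(A)$ as $\U(A)$-bimodules, the rigid dualizing complex is $\U(A)[2\ell]$, and $\U(A)$ is Calabi-Yau of dimension $2\ell$.

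I expect the genuine obstacle to be precisely the bimodule identification $\U(A)\otimes_A A^{\eta}\cong{}^{\psi_\eta}\U(A)^{1}$: one must pin down the $\U(A)$-bimodule structure on $\U(A)\otimes_A M$ for a Poisson module $M$ (which genuinely uses the Poisson action on $M$), then keep careful track of left-versus-right actions and the sign of the twist and check compatibility with all the defining relations of $\U(A)$. The remaining ingredients — Noetherianity and homological smoothness of $\U(A)$, freeness of $\omega_A$ over $A$, and additivity of Poisson twists under $\otimes_A$ — are standard or already in hand, so the unimodular case then follows at once.
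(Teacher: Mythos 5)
Your proposal is correct and follows essentially the same route as the paper: it combines Theorem \ref{RigidDCP} (whose CY case already contains exactly the bimodule identification $\U(A)\otimes_A{}^{2\delta}A\cong\U(A)^{-2\delta}$ that you single out as the main ingredient, modulo your left/right twist conventions) with Proposition \ref{CYRigid}, using that $\U(A)^e=\U(A\otimes A)$ is noetherian of finite global dimension — note the hypothesis to verify is on $\U(A)^e$, not just $\U(A)$. For the unimodular case you trivialize $\mathcal L_A$ in $\Pic_P(A)$ so that the rigid dualizing complex becomes $\U(A)[2\ell]$, while the paper (Theorem \ref{EqHUCY} and Remark \ref{CYUni}) instead notes that $2\delta$ is then log-Hamiltonian and hence the Nakayama automorphism is inner by Lemma \ref{ModAut}; these are equivalent steps, both resting on Lemma \ref{HomU}.
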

In the deformation quantization of a Poisson algebra, Dolgushev showed that the deformation quantization algebra is a Calabi-Yau algebra if and only if the corresponding Poisson structure is unimodular \cite{Dol}. Our result further confirms that unimodularity of a Poisson algebra plays the same role as Calabi-Yau condition in its representation category.

We want to make a remark that a general duality theorem is proved by Huebschmann \cite{Hue99} and Chemia \cite{Chem94,Chem99,Chem04} in the setting of Lie-Rinehart algebras and Lie algebroids.

The paper is organized as follows. Basic definitions of Poisson (co)homology, Calabi-Yau algebra and rigid dualizing complex are recalled in Section \ref{S:Pre}. Twisted Poisson module structures are discussed in Section \ref{S:Twisted}, where we prove that the tensor $\otimes_A$ and Hom $\Hom_A(-,-)$ are two well-defined operators on the category of Poisson modules. In Section \ref{S:HomM}, we introduce the concept of homological unimodularity for any smooth affine Poisson algebra. Using the invertible Serre bimodule of a Poisson algebra, we prove the twisted Poincar\'e duality between Poisson homology and cohomology in Section \ref{S:PoinP}. At the last Section \ref{S:CY}, we establish several equivalent conditions involving unimodularity of a Poisson algebra and the Calabi-Yau condition of its enveloping algebra.

\section{Preliminary}\label{S:Pre}
Throughout the paper, we work over a base field $k$, algebraically closed of characteristic zero. The unadorned tensor product $\otimes$ means $\otimes_k$. We keep the notation $A$ as an affine Poisson algebra.

\subsection{Poisson universal enveloping algebra}
In \cite{Oh}, Oh introduced the universal enveloping algebra of $A$, denoted by $\U(A)$. Its constructive definition in terms of generators and relations is given as follows.

\begin{deff}\cite[\S 2]{Um}\label{UnivP}
Let $V=A\oplus A$ be the direct sum of two copies of $A$ with corresponding inclusions of $A$ denoted by $M$ and $H$. The {\it universal enveloping algebra} $\U(A)$ of $A$ is defined to be the tensor algebra $T\langle V\rangle $ modulo the following relations, for all $a,b\in A$,
\begin{align*}
M_{ab}&\,=M_aM_b\\
H_{\{a,b\}}&\, =H_aH_b-H_bH_a\\
H_{ab}&\, =M_aH_b+M_bH_a\\
M_{\{a,b\}}&\, =H_aM_b-M_bH_a=[H_a, M_b]\\
M_1&\, =1.
\end{align*}
\end{deff}
The presentation of $\U(A)$ results in an injective algebra map $M: A\rightarrow \U(A)$ and a Lie algebra map $H: A\rightarrow \U(A)$. Therefore we will simply consider $A$ as a subalgebra of $\U(A)$. By Theorem \ref{T:Equi}, we have an equivalence of categories $A\text{-PMod}\equiv \U(A)\text{-Mod}$. Explicitly, for any $M\in A\text{-PMod}$, we can consider $M$ as a left $\U(A)$-module where $a\cdot m=am$ and $H_a\cdot m=\{a,m\}_M$, for all $a\in A$ and $m\in M$.

Now let $B$ be another Poisson algebra. The tensor algebra $A\otimes B$ is equipped with a Poisson bracket given by
\[
\{a_1\otimes b_1,a_2\otimes b_2\}:=\{a_1,a_2\}\otimes b_1b_2+a_1a_2\otimes \{b_1,b_2\}
\]
for all $a_i\in A$ and $b_i\in B$. In particular, $A^e:=A\otimes A^{op}$ is a Poisson algebra. The following lemma is an application of the general result for DG Poisson algebras, which can be proved by the universal property of $\U(A)$; see \cite[\S 1.2]{LWZ2}.

\begin{lem}\cite[Theorem 4.5]{LWZ3}\label{TensorU}
There are algebra isomorphisms: $\U(A^{op})=\U(A)^{op}$, and $\U(A^e)=\U(A)\otimes \U(A)^{op}=:\U(A)^e$.
\end{lem}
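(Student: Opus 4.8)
The plan is to prove both isomorphisms by exploiting the universal property of $\U(-)$ encoded in Definition \ref{UnivP}: for any associative algebra $R$, an algebra map $\U(C)\to R$ is the same as a pair $(f,g)$ with $f\colon C\to R$ an algebra map and $g\colon C\to R$ a $k$-linear map satisfying $g(\{a,b\})=[g(a),g(b)]$, $g(ab)=f(a)g(b)+f(b)g(a)$ and $f(\{a,b\})=[g(a),f(b)]$. I will first settle the opposite identity $\U(A^{op})\cong\U(A)^{op}$, then prove the more general statement that $\U(A\otimes B)\cong\U(A)\otimes\U(B)$ for the tensor Poisson structure, and finally combine the two using $A^e=A\otimes A^{op}$.

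For the opposite algebra I send the generators of $\U(A^{op})$ to $\bar M_a\mapsto M_a$ and $\bar H_a\mapsto H_a$, now viewed inside $\U(A)^{op}$. The point is that in $\U(A)^{op}$ the commutator $[x,y]_{op}=-[x,y]$ carries a sign that exactly cancels the sign coming from $\{-,-\}_{A^{op}}=-\{-,-\}_A$; for instance the relation $\bar M_{\{a,b\}_{A^{op}}}=[\bar H_a,\bar M_b]_{op}$ becomes $-M_{\{a,b\}}=-[H_a,M_b]=-M_{\{a,b\}}$, and the Lie and Leibniz relations check out the same way (the symmetric Leibniz relation uses $H_bM_a+H_aM_b=M_aH_b+M_bH_a$, itself a consequence of $[H_a,M_b]=M_{\{a,b\}}$). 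Thus $(f,g)=(M,H)$ defines an algebra map $\U(A^{op})\to\U(A)^{op}$; applying the same recipe to $A^{op}$ and taking opposites produces a map $\U(A)^{op}\to\U(A^{op})$ that is the identity on generators, so the two are mutually inverse.

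For the tensor product I build maps in both directions. In one direction, the assignments $M_a\mapsto M_{a\otimes1}$, $H_a\mapsto H_{a\otimes1}$ and $M_b\mapsto M_{1\otimes b}$, $H_b\mapsto H_{1\otimes b}$ define algebra maps $\alpha\colon\U(A)\to\U(A\otimes B)$ and $\beta\colon\U(B)\to\U(A\otimes B)$, the relations transporting because $\{a\otimes1,a'\otimes1\}=\{a,a'\}\otimes1$ and similarly for $B$. Their images commute: since $A\otimes B$ is commutative the $M$-generators commute, while $\{a\otimes1,1\otimes b\}=0$ forces $[H_{a\otimes1},H_{1\otimes b}]=0$ and $[H_{a\otimes1},M_{1\otimes b}]=M_{\{a\otimes1,1\otimes b\}}=0$. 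Hence we obtain an algebra map $\Psi\colon\U(A)\otimes\U(B)\to\U(A\otimes B)$, $\Psi(x\otimes y)=\alpha(x)\beta(y)$, which is surjective because $M$ and $H$ are $k$-linear and $H_{a\otimes b}=M_{a\otimes1}H_{1\otimes b}+M_{1\otimes b}H_{a\otimes1}$ lies in the image. In the other direction I use the universal property to define $\Phi\colon\U(A\otimes B)\to\U(A)\otimes\U(B)$ by
\[
f(a\otimes b)=M_a\otimes M_b,\qquad g(a\otimes b)=H_a\otimes M_b+M_a\otimes H_b .
\]
A short check on generators, using $M_1=1$ and the forced identity $H_1=0$ (from $H_1=2H_1$) together with $[H_{a\otimes1},M_{1\otimes b}]=0$, shows $\Psi\circ\Phi$ and $\Phi\circ\Psi$ are the identity, so $\Phi=\Psi^{-1}$.

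The main obstacle is verifying that $\Phi$ is well defined, i.e.\ that $g$ satisfies the Lie relation $g(\{p,q\})=[g(p),g(q)]$ in $\U(A)\otimes\U(B)$. Expanding $[g(p),g(q)]$ with $p=a_1\otimes b_1$ and $q=a_2\otimes b_2$ produces two diagonal contributions, which yield $H_{\{a_1,a_2\}}\otimes M_{b_1b_2}$ and $M_{a_1a_2}\otimes H_{\{b_1,b_2\}}$ directly, together with four cross terms; the cross terms recombine into $M_{\{a_1,a_2\}}\otimes H_{b_1b_2}+H_{a_1a_2}\otimes M_{\{b_1,b_2\}}$ precisely through the commutation rule $[H_a,M_{a'}]=M_{\{a,a'\}}$ in each factor, matching $g(\{p,q\})$ for the tensor bracket. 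Alternatively one can bypass this bookkeeping by proving $\Psi$ bijective through the PBW filtration: $\gr\U(C)\cong S_C(\Omega^1_C)$, and $\Omega^1_{A\otimes B}\cong(\Omega^1_A\otimes B)\oplus(A\otimes\Omega^1_B)$ gives $\gr\U(A\otimes B)\cong\gr\U(A)\otimes\gr\U(B)$, so the filtered map $\Psi$ is an isomorphism. Combining the two isomorphisms yields $\U(A^e)=\U(A\otimes A^{op})\cong\U(A)\otimes\U(A^{op})\cong\U(A)\otimes\U(A)^{op}=\U(A)^e$, which is the second assertion.
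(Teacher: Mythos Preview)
Your proof is correct and follows exactly the approach the paper indicates: the paper does not prove the lemma but cites \cite[Theorem 4.5]{LWZ3} and remarks that it ``can be proved by the universal property of $\U(A)$,'' which is precisely what you do by constructing mutually inverse algebra maps on generators and verifying the defining relations. One small caveat: your alternative PBW argument for the injectivity of $\Psi$ invokes $\gr\,\U(C)\cong S_C(\Omega_C)$, which is Rinehart's theorem (Proposition~\ref{PBWP}) and needs $\Omega_C$ projective over $C$, i.e.\ smoothness of $C$; since the lemma is stated without a smoothness hypothesis, only your direct universal-property argument covers the general case.
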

A {\it Poisson bimodule} over $A$ is a left Poisson module over $A^e$. Denote by $A^e\text{-PMod}$ the category of all Poisson $A$-bimodules. Similarly, there is an equivalence of categories $A^e\text{-PMod}\equiv \U(A)^{e}\text{-Mod}$.

The method of localization is available for Poisson algebras. We list some of the results needed later in this paper.
\begin{lem}\label{Local}
Let $A$ be a Poisson algebra. Suppose $S$ is a multiplicative set of $A$, $M,N$ are two Poisson $A$-modules, and $L$ is a Poisson $A$-bimodule.
\begin{enumerate}
\item The Poisson structure of $A$ extends uniquely to $AS^{-1}$. The set $S$ is an Ore set of $\U(A)$ and $\U(AS^{-1})\cong \U(A)S^{-1}$.
\item The localization $MS^{-1}$ is a Poisson $AS^{-1}$-module.
\item $\Ext_{\U(A)}^*(M,L)$ is equipped with a right Poisson structure coming from $L$. And $\Ext_{\U(A)}^*(M,L)S^{-1}\cong \Ext_{\U(AS^{-1})}^*(MS^{-1},LS^{-1})$ as right Poisson $AS^{-1}$-modules.
\item For any Poisson module morphism $f: M\to N$ and $p\in \Spec(A)$, the localization $f_p:M_p\to N_p$ is a Poisson $A_p$-module morphism . Moreover, $f$ is injective (resp. surjective) if and only if $f_p$ is injective (resp. surjective) for all $p\in \Spec(A)$.
\item For any $A$-module morphism $f: M\to N$, $f$ is a Poisson module morphism if and only if $f_p$ is a Poisson module morphism for all $p\in \Spec(A)$.
\end{enumerate}
\end{lem}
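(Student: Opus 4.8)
The plan is to make part (a) the foundation and to derive (b)--(e) from it together with the categorical equivalence of \Cref{T:Equi} and the standard exactness/flatness properties of localization. For the first sentence of (a), the extension of the Poisson bracket to $AS^{-1}$ is forced: applying the Leibniz rule to $1=s\cdot s^{-1}$ gives $\{s^{-1},b\}=-s^{-2}\{s,b\}$, so that the general formula
\[
\{as^{-1},bt^{-1}\}=\frac{\{a,b\}}{st}-\frac{a\{s,b\}}{s^2t}-\frac{b\{a,t\}}{st^2}+\frac{ab\{s,t\}}{s^2t^2}
\]
is the only possible one; I would then verify it is well defined and satisfies skew-symmetry, Jacobi and Leibniz by a direct (if tedious) computation, yielding both existence and uniqueness. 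For the Ore condition I would use the order filtration on $\U(A)$ in which $A=M(A)$ sits in degree $0$ and each $H_a$ in degree $1$; by the Poisson PBW theorem (viewing $\U(A)$ as the enveloping algebra of the Lie--Rinehart algebra $\Omega^1_A$) the associated graded $\gr\U(A)\cong\mathrm{Sym}_A(\Omega^1_A)$ is commutative and Noetherian, whence $\U(A)$ is Noetherian. The defining relation $H_as-sH_a=\{a,s\}$ shows that every commutator with $s\in S$ drops filtration degree, so the (trivial) Ore condition for $S$ in the commutative ring $\gr\U(A)$ lifts to an Ore condition in $\U(A)$; the isomorphism $\U(AS^{-1})\cong\U(A)S^{-1}$ then follows by comparing universal properties, where on the right $H$ extends via $H_{as^{-1}}=M_{s^{-1}}H_a-M_{as^{-2}}H_s$.

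Parts (b) and (c) are then base-change statements along the flat ring map $\U(A)\to\U(A)S^{-1}=\U(AS^{-1})$. For (b) I would identify $MS^{-1}\cong M\otimes_{\U(A)}\U(AS^{-1})$, which is automatically a $\U(AS^{-1})$-module and hence a Poisson $AS^{-1}$-module by \Cref{T:Equi}; one checks that the induced bracket is the expected localized one. For (c), the right $\U(A)$-module structure on $L$ coming from its $A^{op}$-factor (the right Poisson action) descends to a right Poisson, i.e. right $A$-module, structure on $\Ext^*_{\U(A)}(M,L)$. Since $\U(A)$ is Noetherian and $M$ is finitely generated, flat base change for Ext gives $\Ext^*_{\U(A)}(M,L)\otimes_{\U(A)}\U(AS^{-1})\cong\Ext^*_{\U(AS^{-1})}(MS^{-1},LS^{-1})$, and the left-hand side is the Ore localization $\Ext^*_{\U(A)}(M,L)S^{-1}$ of the right module; tracking the right action of $L$ through this isomorphism shows it is Poisson $AS^{-1}$-linear.

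Part (d) follows from exactness of localization: the localized map $f_p=f\otimes_A A_p$ is $\U(A_p)$-linear by functoriality, hence a Poisson $A_p$-module morphism by (b) and \Cref{T:Equi}; and since $\Ker(f)_p=\Ker(f_p)$ and $\Img(f)_p=\Img(f_p)$ while a module vanishes iff all its localizations do, $f$ is injective (resp. surjective) iff every $f_p$ is. The conceptual core is (e): for an $A$-linear $f:M\to N$ and $a\in A$ set $\phi_a:=f\circ H_a-H_a\circ f:M\to N$, measuring the failure of $f$ to commute with the Poisson action. Using $H_a(bm)=bH_a(m)+\{a,b\}m$ together with $A$-linearity of $f$, a short computation shows $\phi_a$ is itself $A$-linear, and $f$ is a Poisson morphism iff $\phi_a=0$ for all $a$. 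Localizing at $p$, the Poisson-module axioms established in (b) give $(\phi_a)_p(m/s)=f_p(\{a/1,m/s\}_{M_p})-\{a/1,f_p(m/s)\}_{N_p}$, so $f_p$ being Poisson forces $(\phi_a)_p=0$, while conversely $\phi_a=0$ localizes to $(\phi_a)_p=0$. Hence $f$ is Poisson iff every $(\phi_a)_p$ vanishes, and since an $A$-linear map is zero iff it is locally zero, this is equivalent to each $f_p$ being a Poisson morphism.

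The main obstacle is part (a): establishing the Ore property and the enveloping-algebra isomorphism. The bracket extension and the identities in (e) are routine once set up, and (b)--(d) are formal consequences of flatness and exactness of localization together with \Cref{T:Equi}; but the Ore condition genuinely uses the noncommutative structure of $\U(A)$, and the cleanest route is the filtered-ring lifting argument resting on the Poisson PBW theorem. I would therefore concentrate the effort on making the filtration and the identity $H_as-sH_a=\{a,s\}$ do the work, and keep the verifications in (b)--(e) brief.
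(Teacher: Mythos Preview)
Your overall plan is sound and far more detailed than the paper's own proof, which simply says ``all of the items can be checked directly'' and then gives a two-line argument for (e). For (e) the paper takes the $A$-submodule $K\subset N$ generated by all elements $\{a,f(m)\}_N-f(\{a,m\}_M)$ and observes $K_p=0$ for all $p$ forces $K=0$; your maps $\phi_a=f\circ H_a-H_a\circ f$ are just the $A$-linear maps whose images span $K$, so the two arguments are the same idea in different packaging.

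There are, however, two places where you silently add hypotheses the lemma does not have. In (a) you invoke the Poisson PBW theorem and Noetherianity of $\gr\U(A)\cong S_A(\Omega_A)$, but \Cref{PBWP} requires $A$ affine smooth, which this lemma does not assume. Fortunately your real argument does not need this: the defining relations of $\U(A)$ already force $\gr_{\mathscr F}\U(A)$ to be commutative (all commutators of generators drop filtration degree), and the Ore condition for $S\subset\mathscr F_0$ then lifts from $\gr$ by a straightforward induction on filtration degree, with no Noetherian hypothesis needed. So drop the appeal to PBW and phrase the lift directly. In (c) you assume $\U(A)$ Noetherian and $M$ finitely generated to get flat base change for $\Ext$; neither is in the statement. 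Since the paper only ever applies (c) with $M=A$ over an affine smooth $A$, this is harmless in practice, but as written your argument does not prove the lemma in the stated generality. Either note the extra hypotheses explicitly, or restrict to the case actually used.
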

\begin{proof}
All of the items can be checked directly. For (e), consider the $A$-submodule of $N$ spanned by elements $\{a,f(m)\}_N-f(\{a,m\}_M)$ for all $a\in A,m\in M$. Denote it by $K$. Then the condition implies that $K_p=0$ for all $p\in \Spec(A)$. Hence $K=0$ and $f$ is a Poisson module morphism.
\end{proof}

Recall that the module $\Omega_A$ of K\"ahler differentials of $A$ is equipped with the Lie-Rinehart algebra \cite[\S 2]{Rinehart} structure derived from the Poisson structure of $A$. As a consequence, it is observed, by several authors \cite{LWZ, Towers}, that $\U(A)$ is canonically isomorphic to the Lie-Rinehart enveloping algebra $V(A,\Omega_A)$. For a detailed account of this isomorphism, one can refer to \cite[Proposition 5.7]{LWZ}. The advantage of identifying $\U(A)$ with $V(A, \Omega_A)$ is that standard results from the theory of Lie-Rinehart algebras can be applied to $\U(A)$. For example, the algebra $V(A, \Omega_A)$ carries a filtration which naturally passes to $\U(A)$ via the canonical isomorphism such that
\begin{align}\label{FAPE}
\mathscr F_{0}=M_A,\ \mathscr F_{1}=M_A+H_A,\ \mathscr F_{i}=(\mathscr F_{1})^i,\ \text{for}\ i\ge 2.
\end{align}
\begin{prop}\label{PBWP}\cite[Theorem 3.1]{Rinehart}
If $A$ is an affine smooth Poisson algebra, then there is an $A$-algebra isomorphism
\begin{align*}
\gr_{\mathscr F} \U(A)\cong S_A(\Omega_A),
\end{align*}
where $S_A(\Omega_A)$ is the symmetric $A$-algebra on $\Omega_A$. In particular for any $p\in \Spec(A)$, $\gr_{\mathscr F} \U(A_p)\cong A_p[dx_1,dx_2,\dots,dx_\ell]$, where $dx_1,\cdots,dx_\ell$ is a local free basis for $\Omega_{A_p}$ over $A_p$.
\end{prop}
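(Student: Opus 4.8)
The plan is to deduce this PBW-type isomorphism from Rinehart's theorem on Lie--Rinehart algebras, using the canonical isomorphism $\U(A)\cong V(A,\Omega_A)$ recalled above: under it the filtration \eqref{FAPE} corresponds to the standard filtration of the Lie--Rinehart enveloping algebra, with $M_a\mapsto a$ and $H_a\mapsto da$. First I would produce the canonical surjection $S_A(\Omega_A)\twoheadrightarrow\gr_{\mathscr F}\U(A)$ of graded $A$-algebras. The relation $H_{ab}=M_aH_b+M_bH_a$ shows that $a\mapsto\overline{H_a}$ is a $k$-linear derivation of $A$ into the $A$-module $\mathscr F_1/\mathscr F_0$, hence factors through an $A$-linear map $\Omega_A\to\mathscr F_1/\mathscr F_0$; and since $[H_a,H_b]=H_{\{a,b\}}$ lies in $\mathscr F_1$ while $[H_a,M_b]=M_{\{a,b\}}$ lies in $\mathscr F_0$, the algebra $\gr_{\mathscr F}\U(A)$ is commutative and, because $\mathscr F_i=(\mathscr F_1)^i$, is generated over $\mathscr F_0=A$ by the image of $\Omega_A$. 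The universal property of the symmetric algebra then gives the surjection, so everything reduces to proving injectivity.

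Injectivity of a map of $A$-modules is a stalk-local condition on $\Spec(A)$, so I would localize. By Lemma \ref{Local}(a), for $p\in\Spec(A)$ and $S=A\setminus p$ one has $\U(A_p)\cong\U(A)S^{-1}$, the filtration $\mathscr F$ localizes to the analogous filtration of $\U(A_p)$, and exactness of localization lets it commute with $\gr$; so the localization at $p$ of the above map is the corresponding map $S_{A_p}(\Omega_{A_p})\to\gr_{\mathscr F}\U(A_p)$. Since $A$ is smooth, $\Omega_{A_p}$ is $A_p$-free of rank $\ell=\dim A$; choosing $x_1,\dots,x_\ell\in A_p$ whose differentials form an $A_p$-basis of $\Omega_{A_p}$, it remains to show that the ordered monomials $H_{x_1}^{i_1}\cdots H_{x_\ell}^{i_\ell}$, for $(i_1,\dots,i_\ell)\in\mathbb{N}^\ell$, form a free $A_p$-basis of $\U(A_p)$.

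This local claim is precisely the Poincar\'e--Birkhoff--Witt theorem for the now-free Lie--Rinehart algebra $(A_p,\Omega_{A_p})$, and I expect it to be the main obstacle. Spanning is an easy induction on filtration degree, reordering products of the generators $M_a$, $H_a$ by means of the defining relations. Linear independence is the crux: one constructs a faithful $\U(A_p)$-module that detects leading symbols, e.g.\ by letting $\U(A_p)$ act on $A_p[\xi_1,\dots,\xi_\ell]=S_{A_p}(\Omega_{A_p})$ with $M_a$ acting by multiplication and $H_{x_j}$ acting as multiplication by $\xi_j$ corrected by a first-order term built from the Hamiltonian derivation $\{x_j,-\}$ so that $[H_a,M_b]=M_{\{a,b\}}$ is respected, and then checking against \eqref{FAPE} that this representation separates the ordered monomials of each total degree modulo $\mathscr F_{n-1}$. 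This is exactly \cite[Theorem 3.1]{Rinehart}, so one may simply invoke it. Gluing the resulting local isomorphisms yields $\gr_{\mathscr F}\U(A)\cong S_A(\Omega_A)$ in general, and the ``in particular'' assertion is the local statement we have just established: $\gr_{\mathscr F}\U(A_p)\cong S_{A_p}(\Omega_{A_p})\cong A_p[dx_1,\dots,dx_\ell]$.
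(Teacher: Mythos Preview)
Your proposal is correct and matches the paper's treatment: the paper gives no proof at all, simply citing \cite[Theorem 3.1]{Rinehart}, since smoothness of $A$ makes $\Omega_A$ projective over $A$ and Rinehart's PBW theorem then applies directly to the Lie--Rinehart pair $(A,\Omega_A)$ via the identification $\U(A)\cong V(A,\Omega_A)$. Your write-up is a faithful unpacking of that citation, with the added localization step making the ``in particular'' clause explicit; the only remark is that the detour through stalks is not strictly needed, as Rinehart's theorem already yields the global isomorphism under the projectivity hypothesis.
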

Therefore the algebra $\U(A)$ inherits nice ring-theoretic and homological properties from  $\gr_{\mathscr F} \U(A)$ by the standard results of filtered algebras \cite{HO}.
\begin{cor}\label{PropAPE}
Suppose $A$ is an affine smooth Poisson algebra. Then $\U(A)$ is projective over its subalgebra $A$. Moreover, $\U(A)$ is a noetherian, Auslander-regular domain which is a maximal order in its quotient division ring of fractions.
\end{cor}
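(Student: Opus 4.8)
The plan is to deduce every assertion from Proposition \ref{PBWP}, which identifies $\gr_{\mathscr F}\U(A)$ with the symmetric algebra $S_A(\Omega_A)$, together with the standard lifting results for filtered rings collected in \cite{HO}. First I would record that the filtration $\mathscr F$ of \eqref{FAPE} is positive, exhaustive and separated, and that $\gr_{\mathscr F}\U(A)$ is noetherian: since $A$ is affine, $\Omega_A$ is a finitely generated $A$-module, so $S_A(\Omega_A)$ is a finitely generated commutative $A$-algebra and hence noetherian by the Hilbert basis theorem. Consequently $\mathscr F$ is a Zariskian filtration, and noetherianity, the domain property, Auslander-regularity and the maximal-order property all propagate from $\gr_{\mathscr F}\U(A)$ up to $\U(A)$.

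The next step is to verify these properties for $S_A(\Omega_A)$. Smoothness of $A$ means $\Omega_A$ is a finitely generated projective $A$-module, locally free of rank $\ell$, so for each $p\in\Spec A$ one has $S_A(\Omega_A)_p\cong A_p[dx_1,\dots,dx_\ell]$, a polynomial ring over the regular local ring $A_p$. Hence $S_A(\Omega_A)$ is commutative, noetherian, of finite global dimension (in fact $2\ell$), and therefore regular; a commutative regular ring is automatically Auslander-regular, so this transfers to $\U(A)$. For the domain property we may assume $A$ is a domain (being the coordinate ring of a smooth connected affine variety), and then $S_A(\Omega_A)$ embeds into $S_{\mathrm{Frac}(A)}\bigl(\Omega_A\otimes_A\mathrm{Frac}(A)\bigr)$, a polynomial ring over a field. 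Finally, $A$ is integrally closed, and a ring that is locally a polynomial ring over a normal domain is normal, so $S_A(\Omega_A)$ is an integrally closed noetherian domain, i.e.\ a maximal order in its fraction field. Feeding these facts into the lifting theorems of \cite{HO} yields that $\U(A)$ is a noetherian, Auslander-regular domain that is a maximal order in its quotient division ring of fractions.

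For projectivity over $A$ I would argue directly from the filtration. Each graded component $\mathscr F_i/\mathscr F_{i-1}\cong S_A^i(\Omega_A)$ is projective over $A$, since a symmetric power of a projective module is projective, and $\mathscr F_0=S_A^0(\Omega_A)=A$ with $\mathscr F_{-1}=0$. Choosing, for each $i$, an $A$-linear section $s_i\colon S_A^i(\Omega_A)\to\mathscr F_i$ of the symbol map $\mathscr F_i\twoheadrightarrow\mathscr F_i/\mathscr F_{i-1}$ (with $s_0=\mathrm{id}_A$), the map $\sum_i s_i\colon\bigoplus_{i\ge0}S_A^i(\Omega_A)\to\U(A)$ is an $A$-module isomorphism: injectivity and surjectivity follow by comparing top-degree symbols and inducting on the filtration degree. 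Hence $\U(A)\cong\gr_{\mathscr F}\U(A)=S_A(\Omega_A)$ as $A$-modules, which is projective.

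The hard part will not be any individual computation but making sure the hypotheses of the filtered-to-graded transfer results are genuinely met — in particular that $\mathscr F$ is Zariskian and that the maximal-order statement admits a lifting theorem in the precise form required; the commutative-algebra input for $S_A(\Omega_A)$ is routine for smooth affine varieties.
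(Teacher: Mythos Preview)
Your proposal is correct and follows essentially the same route as the paper: identify $\gr_{\mathscr F}\U(A)\cong S_A(\Omega_A)$ via Proposition~\ref{PBWP}, check the relevant properties for this commutative ring (the paper does this by localizing at primes of $A$, you do it a bit more globally but still invoke the local description for regularity and normality), and then lift everything through the Zariskian filtration using \cite{HO}. Your explicit splitting argument for $A$-projectivity and your verification that $\mathscr F$ is Zariskian simply fill in details the paper leaves implicit; the only tacit hypothesis in both arguments is that $A$ itself is a domain, without which neither $S_A(\Omega_A)$ nor $\U(A)$ can be one.
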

\begin{proof}
By Proposition \ref{PBWP}, it is clear that the properties in the statement are satisfied for the localization of the associated graded algebra $\gr_\mathscr F\U(A)$ at any $p\in \Spec(A)$. An easy exercise of localization shows that the same properties hold for $\gr_\mathscr F\U(A)$ as well. Then one can apply the standard results of Zariskian filtrations \cite{HO}.
\end{proof}

\subsection{Poisson homology and Poisson cohomology}
Let $M$ be a right Poisson $A$-module. Then there is a chain complex on the $A$-module $M\otimes_A \Omega^n_A$, where $\Omega^n_A=\wedge^n \Omega_A$ denotes the module of K\"ahler differential $n$-forms. The boundary operator $\partial_n: M\otimes_A\Omega_A^n\to M\otimes_A \Omega^{n-1}_A$ is defined by
\begin{align}\label{PHom}
&\partial_n(m\otimes da_1\wedge \cdots \wedge da_n)=\sum_{1\le i\le n} (-1)^{i+1}\{m,a_i\}_M \otimes da_1\wedge \cdots \wedge \widehat{da_i}\wedge \cdots \wedge da_n\\
&\quad\quad+\sum_{1\le i<j\le n} (-1)^{i+j} m\otimes d\{a_i,a_j\}\wedge da_1\wedge \cdots \wedge \widehat{da_i}\wedge \cdots \wedge \widehat{da_j}\wedge \cdots \wedge da_n.\notag
\end{align}
It is easy to see that $\partial_{n-1}\partial_n=0$. The homology of this complex is denoted by $\HP_*(M)$ and is called the {\it Poisson homology} of the Poisson algebra $A$ with coefficients in the Poisson module $M$ \cite{Mas}.

On the other hand, denote by $\mathfrak{X}^n(M)$ the space of all skew-symmetric $n$-linear maps $\wedge^n A\to M$ that are derivations in each argument. Then there is a cochain complex $(\mathfrak{X}^*(M),\delta^*)$, where $\delta^n: \mathfrak{X}^n(N)\to \mathfrak{X}^{n+1}(N)$ is defined by
\begin{align*}
&\delta^n(f)(a_0\wedge a_1\wedge \cdots \wedge a_n)=\sum_{0\le i\le n} (-1)^{i+1}\{f(a_0\wedge \cdots \wedge \widehat{a_i}\wedge \cdots \wedge a_n), a_i\}_M\\
&\quad\quad +\sum_{0\le i<j\le n} (-1)^{i+j} f(\{a_i,a_j\}\wedge a_0\wedge \cdots \wedge \widehat{a_i}\wedge \cdots \wedge \widehat{a_j}\wedge \cdots \wedge a_n)
\end{align*}
for all $f\in \mathfrak{X}^n(M)$. One sees that $\delta^n$ is well-defined. The cohomology of this complex is denoted by $\HP^*(M)$ and is called the {\it Poisson cohomology} of the Poisson algebra $A$ with coefficients in the Poisson module $M$ \cite{Lic77, Hue}.

When $A$ is affine smooth, then Poisson homology and cohomology can be interpreted as torsion and extension groups via the enveloping algebra $\U(A)$. First of all, $A$ can be viewed as a left $\U(A)$-module as well as a right $\U(A)$-module through the natural Poisson structure on $A$. By \cite[Lemma 4.1]{Rinehart}, the complex $(\U(A)\otimes_A \Omega^*_A,\partial_*)$ with differentials given by \eqref{PHom} ($M=\U(A)$) is a projective resolution of $A$ in the category $\U(A)\text{-Mod}$. The following proposition, to our knowledge, is first explicitly spelled out in \cite{Hue}.

\begin{prop}\label{Poissonext}
Let $A$ be an affine smooth Poisson algebra and $M$ be a right Poisson $A$-module. Then
\begin{equation*}
\HP_*(M)\cong \Tor_*^{\U(A)}(M, A),\quad \HP^*(M)\cong \Ext^*_{\U(A)^{op}}(A, M).
\end{equation*}
\end{prop}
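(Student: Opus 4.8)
The plan is to use the explicit projective resolution of $A$ over $\U(A)$ afforded by the Lie--Rinehart Chevalley--Eilenberg complex $(\U(A)\otimes_A\Omega^*_A,\partial_*)$, which is quoted just before the statement from \cite[Lemma 4.1]{Rinehart}. Granting that this is a projective resolution of $A$ in $\U(A)\text{-Mod}$ (each $\U(A)\otimes_A\Omega^n_A$ is projective over $\U(A)$ because $\Omega^n_A$ is a finitely generated projective $A$-module and $\U(A)$ is projective over $A$ by \Cref{PropAPE}), the two isomorphisms become essentially bookkeeping: one applies $M\otimes_{\U(A)}-$ for the homology statement and $\Hom_{\U(A)^{op}}(-,M)$ for the cohomology statement, and then identifies the resulting complexes with the Poisson chain and cochain complexes defined in \eqref{PHom} and the formula for $\delta^n$.

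First I would treat the homology side. Here I tensor the resolution on the left by the right $\U(A)$-module $M$. Using the isomorphism $M\otimes_{\U(A)}(\U(A)\otimes_A\Omega^n_A)\cong M\otimes_A\Omega^n_A$, the induced differential carries $m\otimes da_1\wedge\cdots\wedge da_n$ to the expression obtained by moving the $H$-part of $\partial_n$ across the tensor: the term $H_{a_i}$ acts on the right on $M$, and by the $\U(A)$-module structure translating to the Poisson action this is exactly $\{m,a_i\}_M$ (with the appropriate sign), while the purely $A$-linear term $d\{a_i,a_j\}\wedge\cdots$ passes through unchanged. This reproduces $\partial_n$ of \eqref{PHom} verbatim, so $\Tor_*^{\U(A)}(M,A)=H_*(M\otimes_A\Omega^*_A,\partial_*)=\HP_*(M)$. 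For the cohomology side, I apply $\Hom_{\U(A)^{op}}(-,M)$ to the resolution; since $\U(A)\otimes_A\Omega^n_A\cong \Omega^n_A\otimes_A\U(A)$ as right $\U(A)$-modules (using the standard left-right swap for Lie--Rinehart enveloping algebras, or simply quoting \Cref{TensorU}/the self-duality of $\U(A)$ under $A\mapsto A^{op}$), one gets $\Hom_{\U(A)^{op}}(\U(A)\otimes_A\Omega^n_A,M)\cong\Hom_A(\Omega^n_A,M)=\mathfrak X^n(M)$, the skew-symmetric derivation-valued $n$-forms. A direct check that the transpose of $\partial_{n+1}$ agrees with $\delta^n$ then yields $\Ext^*_{\U(A)^{op}}(A,M)=H^*(\mathfrak X^*(M),\delta^*)=\HP^*(M)$.

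The main obstacle is not conceptual but one of careful identification: keeping track of the left/right module conventions for $\U(A)$ and $A$, and verifying that the sign conventions and the distinction between the left Poisson action (used on $A$ inside $\U(A)$) and the right Poisson action on $M$ are matched so that the differentials obtained from the resolution coincide \emph{on the nose} with \eqref{PHom} and the displayed formula for $\delta^n$, rather than merely up to an automorphism of the complex. In particular the cohomology computation requires the left-right flip of the resolution, and one must confirm that this flip sends the Chevalley--Eilenberg differential to the transpose differential without introducing extra signs; this is where I would spend the most care, localizing at primes $p\in\Spec(A)$ via \Cref{Local} and \Cref{PBWP} if a coordinate computation is needed to pin down the signs. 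Everything else reduces to the hom-tensor identities for the finitely generated projective $A$-modules $\Omega^n_A$ together with \Cref{PropAPE}.
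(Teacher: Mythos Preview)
Your proposal is correct and follows exactly the approach the paper sets up: the paper does not give an independent proof of this proposition but cites \cite{Hue} and, in the paragraph immediately preceding the statement, records that $(\U(A)\otimes_A\Omega^*_A,\partial_*)$ is a projective resolution of $A$ in $\U(A)\text{-Mod}$ via \cite[Lemma~4.1]{Rinehart}. Your plan of applying $M\otimes_{\U(A)}-$ and $\Hom_{\U(A)^{op}}(-,M)$ to this resolution and identifying the resulting complexes with the Poisson chain and cochain complexes is precisely the argument implicit in that setup; the sign and left/right bookkeeping you flag is real but routine.
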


\subsection{Calabi-Yau algebra and rigid dualizing complex}\label{S:Rigid}
Let $B$ be an associative algebra, and $B^e=B\otimes B^{op}$. Let $M$ be a $B$-bimodule, or equivalently, a left $B^e$-module. For every pair of algebra automorphisms $\sigma,\tau$ of $B$, we write $\!^\sigma M^\tau$ for the $B$-bimodule defined by $r\cdot m \cdot s = \sigma(r)m\tau(s)$ for all $r,s\in B$ and $m\in M$. When one or the other of $\sigma,\tau$ is the identity map, we shall simply omit it, writing for example $M^\tau$ for $\!^1M^\tau$. The Van den Bergh condition will constitute a key hypothesis in Poincar\'e duality between Poisson and Hochschild (co)homology.

\begin{deff}\cite[Theorem 1]{VDB2}\label{VDBcondition}
Suppose that $B$ has finite injective dimension $d$ on both sides. Then $B$ satisfies the {\it Van den Bergh condition} if
\begin{align*}
\Ext_{B^e}^i( B,B^e)=
\begin{cases}
0   &   i\neq d\\
U &  i=d,
\end{cases}
\end{align*}
where $U$ is an invertible $B$-bimodule, i.e., there exists another $B$-bimodule $U^{-1}$ such that $U\otimes_BU^{-1}\cong B\cong U^{-1}\otimes_BU$ as $B$-bimodules.
\end{deff}
The definition of Calabi-Yau algebra is due to Ginzburg \cite{Gin}.

\begin{deff}\label{CY}
We say $B$ is {\it skew Calabi-Yau} (skew CY) of dimension $d$ if the following conditions hold:
\begin{enumerate}
\item $B$ is homologically smooth, that is, $B$ has a projective resolution in the category $B^e\text{-Mod}$ that has finite length and such that each term in the
projective resolution is finitely generated, and
\item there is an algebra automorphism $\nu$ of $B$ such that $B$ satisfies the Van den Bergh condition with $U=B^\nu$.
\end{enumerate}
In this case, $\nu$ is said to be the {\it Nakayama automorphism} of $B$ (up to some inner automorphism). Moreover, we say $B$ is {\it Calabi-Yau} (CY) if $\nu$ can be chosen as an inner automorphism.
\end{deff}

Denote by $\text{D}(B^e\text{-Mod})$ ($\text{D}^{\text{b}}(B^e\text{-Mod})$) the (bounded) derived category of all $B$-bimodules. There is a close relationship between $B$ is skew CY and the existence of a rigid dualizing complex in $\text{D}^{\text{b}}(B^e\text{-Mod})$. The next definition is due to Yekutieli \cite{Yek}.
\begin{deff}
Let $B$ be a left and right noetherian algebra. A complex $R\in \text{D}^{\text{b}}(B^e\text{-Mod})$ is called a {\it dualizing complex} over $B$ if it satisfies the following conditions:
\begin{enumerate}
\item $R$ has finite injective dimension over $B$ and over $B^{op}$ respectively.
\item $R$ is homologically finite over $B$ and over $B^{op}$ respectively.
\item The canonical morphisms $B \to \RHom_B(R,R)$ and $B\to \RHom_{B^{op}}(R,R)$ are isomorphisms in $\text{D}(B^e\text{-Mod})$.
\end{enumerate}
\end{deff}
The next definition is due to Van den Bergh \cite{VDB}.
\begin{deff}
Let $B$ be a left and right noetherian algebra. A dualizing complex $R$ is {\it rigid} if
\[
R\cong \RHom_{B^e}\left(B,\!_B R\otimes R_B\right)
\]
in $\text{D}(B^e\text{-Mod})$. The notations $\!_BR$ and $R_B$ mean that we take the $\RHom$ over the left and the right $B$-structures of $R$ respectively.
\end{deff}

The rigid dualizing complex, if it exists, is unique up to isomorphism \cite[Proposition 8.2]{VDB}. The next result is motivated by \cite[Proposition 8.4]{VDB} and \cite[Proposition 4.3]{BZ}.

\begin{prop}\label{CYRigid}
Let $B$ be a left and right noetherian algebra. Then the Van den Bergh condition holds if and only if $B$ has a rigid dualizing complex $V[s]$, where $V$ is invertible and $s\in \mathbb Z$. In this case $U =V^{-1}$ and $s=d$. Moreover if $B^e$ is also noetherian and has finite global dimension. Then $B$ is skew CY of dimension $d$ if and only if $B$ has a rigid dualizing complex $B^{\sigma}[s]$ for some $\sigma\in \Aut(B)$ and $s\in \mathbb Z$. In this case, $d=s$ and $\sigma^{-1}$ is the Nakayama automorphism of $B$.
\end{prop}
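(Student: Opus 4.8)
The plan is to reduce everything to the single derived-category identity $\RHom_{B^e}(B,B^e)\cong V^{-1}[-s]$ and then read off both the Van den Bergh condition and, under the extra hypotheses on $B^e$, the skew Calabi--Yau property.

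\emph{From the Van den Bergh condition to a rigid dualizing complex.} Assume $B$ has finite injective dimension $d$ on both sides and $\Ext^i_{B^e}(B,B^e)$ is $0$ for $i\neq d$ and an invertible bimodule $U$ for $i=d$; equivalently $\RHom_{B^e}(B,B^e)\cong U[-d]$ in $\text{D}(B^e\text{-Mod})$. I would set $V:=U^{-1}$ and verify that $R:=V[d]$ is a rigid dualizing complex. Since $V$ is invertible it is finitely generated and projective on each side, so $R$ is homologically finite over $B$ and over $B^{op}$; moreover $V\otimes_B-$ (and $-\otimes_BV$) is an exact equivalence preserving injectives, so $R$ has the same finite injective dimension $d$ over $B$ and over $B^{op}$ as $B$, and the canonical maps $B\xrightarrow{\sim}\RHom_B(R,R)$, $B\xrightarrow{\sim}\RHom_{B^{op}}(R,R)$ are isomorphisms for the same reason; hence $R$ is a dualizing complex. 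For rigidity I would rewrite ${}_BR\otimes R_B=(V\otimes_k V)[2d]$ as $V\otimes_B B^e\otimes_B V\,[2d]$, with the two outer tensor factors carrying precisely the $B^e$-structure used by $\RHom_{B^e}(B,-)$, and pull the flat (indeed finitely generated projective) invertible bimodules out of $\RHom_{B^e}(B,-)$, obtaining
\begin{align*}
\RHom_{B^e}\bigl(B,\,{}_BR\otimes R_B\bigr)
&\cong V\otimes_B\RHom_{B^e}(B,B^e)\otimes_B V\,[2d]\\
&\cong V\otimes_B U[-d]\otimes_B V\,[2d]\;\cong\;V[d]=R,
\end{align*}
using $V\otimes_B U\cong B\cong U\otimes_B V$. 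This exhibits a rigid dualizing complex $V[s]$ with $U=V^{-1}$ and $s=d$.

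\emph{From a rigid dualizing complex $V[s]$ back to the Van den Bergh condition.} First I would observe that $V=R[-s]$ has finite injective dimension over $B$ and over $B^{op}$ (because $R$ does), hence so does $B=V^{-1}\otimes_B V$. Then I would run the above computation in reverse: rigidity of $R=V[s]$ says $V[s]\cong\RHom_{B^e}\bigl(B,(V\otimes_k V)[2s]\bigr)$, and pulling the invertible bimodules through as before turns this into $V\otimes_B\RHom_{B^e}(B,B^e)\otimes_B V\cong V[-s]$; tensoring with $V^{-1}$ on each side gives $\RHom_{B^e}(B,B^e)\cong V^{-1}[-s]$, i.e. $\Ext^i_{B^e}(B,B^e)$ is $0$ for $i\neq s$ and equals the invertible bimodule $V^{-1}$ for $i=s$. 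It remains to identify the shift $s$ with the injective dimension $d$: this is the step I expect to be the main obstacle, since the displayed computation only concentrates $\Ext_{B^e}(B,B^e)$ in the single degree $s$, and one must invoke the standard relation between the homological shift of such a ``trivial'' dualizing complex and the injective dimension of $B$ on each side (as in the dualizing-complex theory of Yekutieli and Van den Bergh) to force $d=s$. Granting this, we recover the Van den Bergh condition with $U=V^{-1}$ and $d=s$.

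\emph{The skew Calabi--Yau statement.} Now add the hypothesis that $B^e$ is noetherian of finite global dimension. If $B$ is skew CY of dimension $d$ with Nakayama automorphism $\nu$, then by definition $B$ satisfies the Van den Bergh condition with $U=B^\nu$, so the first part yields the rigid dualizing complex $B^{\nu^{-1}}[d]=B^\sigma[s]$ with $\sigma=\nu^{-1}$ and $s=d$. Conversely, if $B$ has rigid dualizing complex $B^\sigma[s]$, the second part gives the Van den Bergh condition with $U=B^{\sigma^{-1}}$ and $d=s$, which is exactly condition (2) in the definition of skew CY with $\nu=\sigma^{-1}$; condition (1), homological smoothness, is automatic because $B$ is a cyclic---hence finitely generated---module over the noetherian ring $B^e$ of finite global dimension, so it admits a finite resolution by finitely generated projective $B^e$-modules. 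Therefore $B$ is skew CY of dimension $d=s$ with Nakayama automorphism $\sigma^{-1}$, completing the proof.
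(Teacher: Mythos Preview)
The paper does not actually prove the first part at all: it simply cites \cite[Proposition~4.3]{BZ} and moves on. Your attempt to supply the argument directly---reducing rigidity of $V[s]$ to the single identity $\RHom_{B^e}(B,B^e)\cong V^{-1}[-s]$ by pulling the invertible bimodules through $\RHom_{B^e}(B,-)$---is precisely the computation that underlies that reference, and your outline of it is correct. You are also right to isolate the identification $s=d$ (with $d$ the injective dimension of $B$) as the one step that does not fall out of the rigidity manipulation alone; in the cited literature this is extracted from the general theory of dualizing complexes, and the paper does not reproduce it either. So on the first part you are doing strictly more than the paper, along the expected lines, with the genuine subtlety correctly flagged.

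For the second (skew Calabi--Yau) part your argument coincides with the paper's: in one direction apply the first part to the invertible bimodule $B^{\nu}$; in the other, use the first part to obtain the Van den Bergh condition with $U=B^{\sigma^{-1}}$, and obtain homological smoothness from the hypotheses on $B^e$ (noetherian of finite global dimension) by taking a finite resolution of the finitely generated $B^e$-module $B$ by finitely generated projectives. There is nothing to add here.
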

\begin{proof}
The first part is exactly \cite[Proposition 4.3]{BZ}. Regarding the second part, we assume $B$ to be skew CY of dimension $d$. It is well known that $B$ has finite global dimension. Then $B$ satisfies the Van den Bergh condition with $U=B^{\nu}$, where $\nu$ is the Nakayama automorphism of $B$. Thus it follows from the first part. Conversely, say $B$ has a rigid dualizing complex $B^{\sigma}[s]$. Still by the first part, $B$ satisfies Definition \ref{CY} (b).  By the assumptions of $B^e$, we can find a projective resolution of $B$ in the category $B^e\text{-Mod}$ such that it is of finite length and each term of the resolution is finitely generated. Hence $B$ is skew CY.
\end{proof}

Suppose $B$ is an affine smooth commutative algebra of dimension $d$. By \cite[Example 3.2.1]{Gin}, we obtain
\begin{align}\label{SerreB}
\Ext_{B^e}^i(B,B^e)=
\begin{cases}
0  &  i\neq d\\
\wedge^d\, \Der_k(B)&  i=d,
\end{cases}
\end{align}
as $B$-modules. Note that in Serre duality, the $B$-bimodule $\Ext_{B^e}^d(B,B^{e})$ is often called the invertible Serre bimodule. Then one deduces that $B$ satisfies the Van den Bergh condition and it  has rigid dualizing complex $\Hom_A(\wedge^d \Der_k(B),A)=\wedge^d\, \Omega_B[d]$.

\begin{prop}\label{ComCY}
Let $B$ be an affine commutative algebra. Then the following are equivalent.
\begin{enumerate}
\item $B$ is skew CY.
\item $B$ is CY.
\item $B$ is smooth and has trivial canonical bundle.
\item $B$ is smooth and has rigid dualizing complex $B[d]$ for some $d\in \mathbb Z$.
\end{enumerate}
\end{prop}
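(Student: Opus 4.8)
The plan is to run the cycle of implications
(c)$\,\Rightarrow\,$(d)$\,\Rightarrow\,$(a)$\,\Rightarrow\,$(b)$\,\Rightarrow\,$(c),
using \eqref{SerreB}, the remark immediately preceding the statement, and Proposition \ref{CYRigid}.
Write $d=\dim B$. A preliminary observation will be used repeatedly: since $B$ is commutative, every $B$-module is canonically a symmetric $B$-bimodule, and for $\sigma\in\Aut(B)$ one has $B^{\sigma}\cong B$ as $B$-bimodules \emph{if and only if} $\sigma=\Id$. Indeed, a left $B$-linear isomorphism $f\colon B\to B^{\sigma}$ is multiplication by the unit $c=f(1)$, and imposing right $B$-linearity forces $bc=c\,\sigma(b)$ for all $b$, whence $\sigma(b)=b$ because $B$ is commutative and $c$ is invertible. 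More generally, if a symmetric bimodule $N$ that is free of rank one as a one-sided module is isomorphic to $B^{\sigma}$, then $B^{\sigma}\cong B^{\Id}$ and $\sigma=\Id$.

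For (c)$\,\Rightarrow\,$(d): if $B$ is smooth, the remark preceding the statement provides the rigid dualizing complex $\Hom_B(\wedge^{d}\Der_k(B),B)[d]\cong\wedge^{d}\Omega_B[d]=\omega_B[d]$; triviality of the canonical bundle means $\omega_B\cong B$, so this rigid dualizing complex is $B[d]$. For (d)$\,\Rightarrow\,$(a): $B$ affine and smooth implies $B^e=B\otimes B$ is affine and smooth of dimension $2d$, hence noetherian of finite global dimension, and $B$ is homologically smooth over $k$; since $B$ has a rigid dualizing complex $B[d]=B^{\Id}[d]$, the second part of Proposition \ref{CYRigid} applies directly and shows $B$ is skew CY (of dimension $d$, with Nakayama automorphism $\Id$).

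For (a)$\,\Rightarrow\,$(b): skew CY forces $B$ to be homologically smooth, hence smooth (see the obstacle below), so \eqref{SerreB} is available. By Definition \ref{CY} there is $\nu\in\Aut(B)$ with $\Ext_{B^{e}}^{i}(B,B^{e})=0$ for $i\neq d$ and $\Ext_{B^{e}}^{d}(B,B^{e})\cong B^{\nu}$ as $B$-bimodules; comparing with \eqref{SerreB}, whose right-hand side is the symmetric bimodule $\wedge^{d}\Der_k(B)$, we get $B^{\nu}\cong\wedge^{d}\Der_k(B)$ as bimodules. In particular $\wedge^{d}\Der_k(B)$ is free of rank one as a left $B$-module, so by the preliminary observation $\nu=\Id$ and $B$ is CY. For (b)$\,\Rightarrow\,$(c): CY again forces $B$ smooth, and the Van den Bergh condition with $U=B^{\Id}=B$ together with \eqref{SerreB} gives $\wedge^{d}\Der_k(B)\cong B$; dualizing the locally free rank-one module $\Der_k(B)=(\Omega_B)^{*}$ yields $\omega_B=\wedge^{d}\Omega_B\cong(\wedge^{d}\Der_k(B))^{*}\cong B$, i.e.\ $B$ has trivial canonical bundle.

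The only non-formal ingredient, and the step I expect to need the most care, is the equivalence between homological smoothness in the sense of Definition \ref{CY}(a) and geometric smoothness for an affine commutative $k$-algebra. The direction ``smooth $\Rightarrow$ homologically smooth'' is the PBW/regularity argument already in force for $\U(A)$ (cf.\ Proposition \ref{PBWP} and Corollary \ref{PropAPE} in the smooth case, applied to $B^{e}$): $B^{e}$ is regular noetherian of finite global dimension and $B$ is a finitely generated $B^{e}$-module, hence has a finite finitely generated projective $B^{e}$-resolution. The converse, ``homologically smooth $\Rightarrow$ smooth'', rests on the fact that a commutative affine algebra over a perfect field with finite Hochschild dimension is regular (equivalently, $\mathrm{pd}_{B^{e}}B<\infty$ localizes to show each $B_{\mathfrak p}$ has finite global dimension, hence is regular); this should be cited rather than reproved. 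Everything else is bookkeeping with the bimodule $\wedge^{d}\Der_k(B)$ and the uniqueness of the rigid dualizing complex.
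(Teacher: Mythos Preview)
Your proposal is correct and runs the same cycle of implications as the paper, using the same three inputs: the Serre computation \eqref{SerreB}, the identification of the rigid dualizing complex as $\omega_B[d]$, and Proposition~\ref{CYRigid}. The only substantive difference is in (a)$\Leftrightarrow$(b): the paper simply cites \cite[Proposition~4.4(b)]{BZ}, whereas you give a direct argument exploiting that $\wedge^d\Der_k(B)$ is a \emph{symmetric} bimodule, so an isomorphism $B^\nu\cong\wedge^d\Der_k(B)$ forces $\nu=\Id$. Your route is more self-contained but obliges you to justify ``homologically smooth $\Rightarrow$ smooth'' before invoking \eqref{SerreB}; the paper sidesteps this by doing (a)$\Leftrightarrow$(b) first via the citation and only needing smoothness in (b)$\Rightarrow$(c), where it follows from the easier ``finite global dimension $\Rightarrow$ regular'' for affine $k$-algebras in characteristic zero. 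Both arguments are sound; yours trades one external reference for another (the finite-Hochschild-dimension criterion for regularity), which you correctly flag.
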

\begin{proof}
(a)$\Leftrightarrow$(b) is based on \cite[Proposition 4.4 (b)]{BZ}.

(b)$\Rightarrow$(c) Clearly $B$ has finite global dimension, and hence it is smooth. By \eqref{SerreB}, $B$ is CY implies that $\wedge^d \Der_k(B)\cong B$ where $d=\dim B$. Then $\wedge^d\Omega_B=\Hom_B(\wedge^d \Der_k(B),B)\cong B$. This means that $B$ has trivial canonical bundle.

(c)$\Rightarrow$(d) It follows from the fact that the rigid dualizing complex of $B$ is given by $\wedge^d\, \Omega_B[d]$, for $d=\dim B$.

(c)$\Rightarrow$(e) We apply Proposition \ref{CYRigid} by using the fact that $B\otimes B$ is noetherian smooth (see \cite[lemma 1]{VDB3}).
\end{proof}

\section{Twisted Poisson module structure}\label{S:Twisted}
In the remaining of the paper, we assume $A$ to be an affine smooth Poisson algebra of dimension $\ell$. The differential forms of maximal degree of $A$ is denoted by $\omega_A=\wedge^\ell \Omega_A$. It is clear that $\wedge^\ell\, \Der_k(A)=\Hom_A(\omega_A,A):=\omega_A^*$, Some of our results hold more generally for arbitrary Poisson algebras, but we will not state them with a specification.

\begin{deff}
A {\it Poisson derivation} of $A$ is a derivation $\delta\in\Der_k(A)$ satisfying
\[\delta\{a, b\}=\{\delta(a),b\}+\{a,\delta(b)\}\]
for any $a,b\in A$. In particular, a Poisson derivation given by $u^{-1}\{u,-\}$ for some $u\in A^\times$ is called a {\it log-Hamiltonian derivation}.
\end{deff}
We denote by $\Der_P(A)$ the set of all Poisson derivations of $A$. For any $u,v\in A^\times$, one sees that $u^{-1}\{u,-\}+v^{-1}\{v,-\}=(uv)^{-1}\{uv,-\}$. Hence the set of all log-Hamiltonian derivations of $A$ forms an additive subgroup of $\Der_P(A)$. We use $\mathscr{HP}^1(A)$ to denote the quotient group of all Poisson derivations modulo log-Hamiltonian derivations. Note that $\mathscr{HP}^1(A)$ differs from the $1$-th Poisson cohomology $\HP^1(A)=\Der_P(A)/\{\text{Hamiltonian derivations}\}$.

In the following, we write $\Id_{\mathscr F}\U(A)$ as the set of all automorphisms $\sigma$ of $\U(A)$ satisfying: (i) $\sigma$ preserves the standard filtration \eqref{FAPE} on $\U(A)$, and (ii) $\sigma=\Id$ when passing to the associated graded algebra $\gr_{\mathscr F}\U(A)$. The next lemma shows that any Poisson derivation of $A$ can be derived from such automorphisms of $\U(A)$, where log-Hamiltonian derivations correspond to those inner automorphisms.

\begin{lem}\label{ModAut}
There is a bijection between
\[
\xymatrix{
\Der_P(A) \ar@<+3pt>[rr]^{\varphi}  && \Id_{\mathscr F}\U(A)\ar@<+3pt>[ll]^{\phi}
}
\]
given by $\varphi(\delta)(M_a)=M_a,\varphi(\delta)(H_a)=H_a+M_{\delta(a)}$ and $M_{\phi(f)(a)}=f(H_a)-H_a$ for any $\delta\in \Der_P(A)$, $f\in \Id_{\mathscr F}(\U(A))$ and $a\in A$. Moreover, the following are bijective:
\begin{enumerate}
\item inner automorphisms of $\U(A)$;
\item inner automorphisms of $\U(A)$ in $\Id_{\mathscr F}\U(A)$;
\item log-Hamiltonian derivations of $A$.
\end{enumerate}
\end{lem}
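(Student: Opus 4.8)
The plan is to verify directly that the two displayed maps are well defined and mutually inverse, and then to determine which automorphisms in $\Id_{\mathscr F}\U(A)$ are inner. For the first map, since $\U(A)$ is presented by the generators $M_a,H_a$ together with the five families of relations of Definition \ref{UnivP}, it suffices to check that the assignment $M_a\mapsto M_a$, $H_a\mapsto H_a+M_{\delta(a)}$ respects each of those relations. The relations $M_{ab}=M_aM_b$, $M_1=1$ and $M_{\{a,b\}}=[H_a,M_b]$ are immediate because the $M$'s commute, so that $[M_{\delta(a)},M_b]=0$; the relation $H_{ab}=M_aH_b+M_bH_a$ reduces to the Leibniz identity $\delta(ab)=a\delta(b)+b\delta(a)$; and expanding $[H_a+M_{\delta(a)},\,H_b+M_{\delta(b)}]$ by means of the relations for $H_{\{a,b\}}$ and $M_{\{a,b\}}$ gives $H_{\{a,b\}}+M_{\{\delta(a),b\}+\{a,\delta(b)\}}$, which agrees with $H_{\{a,b\}}+M_{\delta(\{a,b\})}$ exactly because $\delta$ is a \emph{Poisson} derivation. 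Since $M_{\delta(a)}\in\mathscr F_0$, the resulting endomorphism $\varphi(\delta)$ preserves the filtration \eqref{FAPE} and induces the identity on $\gr_{\mathscr F}\U(A)$; moreover $\varphi$ is additive in $\delta$, so $\varphi(\delta)$ is invertible with inverse $\varphi(-\delta)$, whence $\varphi(\delta)\in\Id_{\mathscr F}\U(A)$.

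Going the other way, for $f\in\Id_{\mathscr F}\U(A)$ one uses that $f$ is the identity on $\gr_{\mathscr F}\U(A)$ and that $\mathscr F_0=M_A$ to see $f(M_a)=M_a$ for all $a$, and that $f(H_a)\in\mathscr F_1=M_A+H_A$ is congruent to $H_a$ modulo $\mathscr F_0$ to produce, via the injectivity of $M\colon A\to\U(A)$, a unique element $\phi(f)(a)\in A$ with $f(H_a)=H_a+M_{\phi(f)(a)}$. Feeding this into the relations for $H_{ab}$ and $H_{\{a,b\}}$ and comparing the terms lying in $\mathscr F_0=M_A$ runs the previous computation backwards and shows $\phi(f)\in\Der_P(A)$; the two formulas then make $\varphi\circ\phi=\Id$ and $\phi\circ\varphi=\Id$ transparent.

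For the three-way bijection, the key point is that every unit of $\U(A)$ lies in $A^\times$. Reducing to the case that $A$ is a domain (idempotents of $A$ are Poisson-central, so that $\U(A)$ decomposes over the connected components of $\Spec A$), Proposition \ref{PBWP} identifies $\gr_{\mathscr F}\U(A)$ with $S_A(\Omega_A)$, which is then a domain; hence the filtration degree is additive on products of nonzero elements, and $0=\deg(u)+\deg(u^{-1})$ forces $u,u^{-1}\in\mathscr F_0=A$. Consequently every inner automorphism of $\U(A)$ is conjugation by some $\tilde u\in A^\times$, and a short computation using $[H_a,M_{\tilde u}]=M_{\{a,\tilde u\}}$ shows that this conjugation fixes each $M_a$ and sends $H_a$ to $H_a+M_{\tilde u^{-1}\{\tilde u,a\}}$; that is, it coincides with $\varphi(\delta)$ for the log-Hamiltonian derivation $\delta=\tilde u^{-1}\{\tilde u,-\}$. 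This shows that the inner automorphisms of $\U(A)$ are exactly the inner automorphisms lying in $\Id_{\mathscr F}\U(A)$, and, via the bijection $\varphi$, that these correspond precisely to the log-Hamiltonian derivations, since $\tilde u\mapsto\tilde u^{-1}\{\tilde u,-\}$ is onto the set of log-Hamiltonian derivations.

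I expect the main obstacle to be the well-definedness of $\varphi$ in the first step: checking that the relation $H_{\{a,b\}}=[H_a,H_b]$ is preserved is exactly where the Poisson derivation identity is forced into play, and that computation is really the heart of the lemma. The rest is bookkeeping with the standard filtration and the presentation of $\U(A)$, together with the short unit argument in $\U(A)$; the only mild subtlety in the latter is remembering to reduce to the connected (domain) case before applying the PBW-type result of Proposition \ref{PBWP}.
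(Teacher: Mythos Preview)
Your proposal is correct and follows essentially the same approach as the paper: verify on the presentation of $\U(A)$ that the assignment $M_a\mapsto M_a$, $H_a\mapsto H_a+M_{\delta(a)}$ defines an automorphism precisely when $\delta$ is a Poisson derivation, and then identify the inner automorphisms by showing that units of $\U(A)$ lie in $A^\times$ via the associated graded. The only minor difference is your explicit reduction to the connected (domain) case before invoking Proposition~\ref{PBWP}; the paper simply asserts that $\gr_{\mathscr F}\U(A)$ is a domain, so your version is slightly more careful on this point.
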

\begin{proof}
Since $M_A$ and $H_A$ are generators of $\U(A)$, any automorphism $f\in \Id_{\mathscr F}\U(A)$ is given by some linear map $\delta: A\to A$ such that $f(M_a)=M_a, f(H_a)=H_a+M_{\delta(a)}$ for all $a\in A$. Using Definition \ref{UnivP}, it is straightforward to show that $f$ is well-defined if and only if $\delta\in \Der_P(A)$.

By Proposition \ref{PBWP}, $\gr_{\mathscr F} \U(A)$ is a domain. Hence the units of $\U(A)$ belong to $\mathscr F_0 \U(A)=A$, which are exactly the units of $A$. Hence any inner automorphism $f$ of $\U(A)$ is given by
\begin{align*}
f(M_a)&=M_{u}M_aM_{u^{-1}}=M_a\\
f(H_a)&=M_{u}H_aM_{u^{-1}}=(H_aM_u-M_{\{a,u\}})M_{u^{-1}}=H_a+M_{u^{-1}\{u,a\}}
\end{align*}
for some $u\in A^\times$. Hence $f\in \Id_{\mathscr F}\U(A)$ and is given by the log-Hamiltonian derivation such that $f=\varphi(u^{-1}\{u,-\})$. The inverse correspondence  can be proved similarly.
\end{proof}
As a consequence, there is a one-to-one correspondence between the following.
\begin{align}\label{DerInner}
\xymatrix{
\mathscr {HP}^1(A)\ar@<+3pt>[rr]^-{\varphi} && \Id_{\mathscr F}\U(A)/\{\text{inner automorphisms}\}\ar@<+3pt>[ll]^-{\phi}.
}
\end{align}

Let $M$ be a left $\U(A)$-module, and $\sigma$ an automorphism of $\U(A)$. We consider the twisted $\U(A)$-module $\!^\sigma M$. Lemma \ref{ModAut} implies that if $\sigma \in\Id_{\mathscr F}\U(A)$, i.e., it is given by some $\delta\in\Der_P(A)$, then in $\!^\sigma M$ we have
\[
M_a\cdot m=M_am,\quad   H_a\cdot m=H_am+M_{\delta(a)}m
\]
for all $a\in A$ and $m\in M$. Applying the equivalence of categories $A\text{-PMod} \equiv \U(A)\text{-Mod}$, it means that we can twist any Poisson module $(M,\cdot,\{-,-\}_M)$ by some $\delta\in \Der_P(A)$ such that
\[
a\cdot_\delta m=a\cdot m,\  \{a,m\}_\delta=\{a,m\}_M+\delta(a)\cdot m.
\]
In this case, we simply write $\!^\delta M$ as the twisted Poisson module $(M,\cdot_\delta,\{-,-\}_\delta)$. Note that it provides another explanation of \cite[Proposition 2.7]{LWW}. The following lemma shows that twisted Poisson module structure occurs naturally. We will use the forgetful functor from $A\text{-PMod}$ to $A\text{-Mod}$.

\begin{lem}\label{L:APDer}
Let $M$ be a left (resp. right) Poisson $A$-module.  If $M\cong A$ as $A$-modules, then $M\cong \!^\delta A$ (resp. $M\cong A^\delta$) for some $\delta\in \Der_P(A)$. Moreover, the class of $\delta$ in $\mathscr{HP}^1(A)$ is uniquely determined by $M$.
\end{lem}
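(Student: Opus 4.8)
The plan is to exploit the equivalence of categories $A\text{-PMod}\equiv\U(A)\text{-Mod}$ together with Lemma \ref{ModAut}, reducing the statement to a claim about automorphisms of $\U(A)$. Suppose $M$ is a left Poisson $A$-module with an $A$-module isomorphism $\theta\colon A\to M$. Set $m_0=\theta(1)$, so that $M=Am_0$ and $m_0$ is a free generator over $A$. Viewing $M$ as a $\U(A)$-module, the action of $H_a$ on $m_0$ lies in $M$, hence can be written uniquely as $H_a\cdot m_0 = M_{\delta(a)}\cdot m_0$ for a unique element $\delta(a)\in A$; this defines a $k$-linear map $\delta\colon A\to A$. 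The first task is to check that $\delta\in\Der_P(A)$, i.e.\ that $\delta$ is a derivation and a Poisson derivation. This is a direct computation using the defining relations of $\U(A)$ from Definition \ref{UnivP}: the relation $H_{ab}=M_aH_b+M_bH_a$ applied to $m_0$ gives the Leibniz rule $\delta(ab)=a\delta(b)+b\delta(a)$, and the relation $H_{\{a,b\}}=H_aH_b-H_bH_a$ applied to $m_0$ (expanding $H_b\cdot m_0$ and using that $M_c$ acts $A$-linearly) yields $\delta\{a,b\}=\{\delta(a),b\}+\{a,\delta(b)\}$. Given such a $\delta$, the map $\theta$ intertwines the $\U(A)$-action on $\!^\delta A$ — in which $H_a$ acts as $H_a+M_{\delta(a)}$ on $1$, i.e.\ $H_a\cdot_\delta 1 = \delta(a)$ — with the action on $M$, since $\theta(H_a\cdot_\delta b)=\theta(ab'+\text{correction})$ matches $H_a\cdot(b m_0)$ by the Leibniz-type relations; thus $M\cong\!^\delta A$ as Poisson modules. (The right-module case is identical, replacing $A$ by $A^{op}$ and using $\U(A^{op})=\U(A)^{op}$ from Lemma \ref{TensorU}.)

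For the uniqueness of the class of $\delta$ in $\mathscr{HP}^1(A)$, suppose $\!^\delta A\cong\!^{\delta'} A$ as left Poisson $A$-modules. An isomorphism of the underlying $A$-modules is multiplication by some unit $u\in A^\times$, and requiring it to be a Poisson module morphism forces, after unwinding the twisted brackets, $\delta'(a)-\delta(a)=u^{-1}\{u,a\}$ for all $a\in A$; that is, $\delta-\delta'$ is a log-Hamiltonian derivation, so $\delta$ and $\delta'$ represent the same class in $\mathscr{HP}^1(A)$. Conversely this computation also shows that $\!^\delta A\cong\!^{\delta'}A$ whenever $\delta-\delta'$ is log-Hamiltonian, which pins down the class exactly. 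This last step can also be read off cleanly from the bijection \eqref{DerInner}: twisting $A$ by $\delta$ corresponds to twisting the trivial $\U(A)$-module by $\varphi(\delta)\in\Id_{\mathscr F}\U(A)$, and two such twists of $A$ are isomorphic iff the automorphisms differ by an inner one, i.e.\ iff the classes in $\mathscr{HP}^1(A)$ agree.

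I expect the only mildly delicate point to be the verification that the map $\delta$ produced above is genuinely a \emph{Poisson} derivation rather than merely a derivation — the Jacobi/Poisson-compatibility identity requires carefully expanding the bracket relation $H_{\{a,b\}}\cdot m_0 = (H_aH_b-H_bH_a)\cdot m_0$ and using that $M_c$ acts as the commutative multiplication while $H_a$ acts through $\delta$, keeping track of which terms are $A$-module actions and which involve $\delta$ applied to $\delta(\cdot)$-type expressions. Everything else is bookkeeping with the presentation of $\U(A)$ and the dictionary between $\U(A)$-modules and Poisson modules already set up in the excerpt.
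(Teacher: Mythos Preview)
Your proposal is correct and follows essentially the same approach as the paper: pick a free generator $m_0$, define $\delta(a)$ by $\{a,m_0\}_M=\delta(a)m_0$, verify $\delta\in\Der_P(A)$, and for uniqueness compare two generators differing by a unit $u\in A^\times$ to see that the resulting derivations differ by $u^{-1}\{u,-\}$. You phrase the verification via the relations of $\U(A)$ and invoke Lemma \ref{ModAut}/\eqref{DerInner}, whereas the paper works directly with Definition \ref{PMod}, but these are the same computation in equivalent languages.
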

\begin{proof}
Without loss of generality, we assume $M$ to be a left Poisson $A$-module. We can identify $M$ with $Am$ by choosing some generator $m\in M$. Hence it establishes an isomorphism $\varphi: M\to A$ of left $A$-modules given by $\varphi(am)=a$. For any $a\in A$, there is a unique element $x\in A$ such that $\{a,m\}_M=xm$. Denote $\delta(a)=x$. It is straightforward to check that $\delta\in \Der_P(A)$ and $M\cong \!^\delta A$ as Poisson modules via $\varphi$.

Now let $m'\in M$ be another generator of $M$. Similarly, we have $M\cong \!^{\delta'}A$ where $\delta'\in \Der_P(A)$ is defined by $am'=\delta'(a)m'$ for any $a\in A$. After writing $m'=um$ for some $u\in A^\times$, it is an easy exercise to show that $\delta-\delta'=u^{-1}\{u,-\}$. This implies that $\delta$ is uniquely determined up to some log-Hamiltonian derivation. Hence the class of $\delta$ is unique in $\mathscr{HP}^1(A)$.
\end{proof}

Now we list some results of the behavior of Poisson modules under the tensor and Hom functors in the category $A\text{-Mod}$ via the forgetful functor. These results hold more generally for modules over Lie-Rinehart algebras \cite[pp. 111-112]{Hue99} and for any Lie algebroid \cite[Proposition 4.2.1]{Chem04}. We also state their derived versions.

\begin{prop}\label{P:THPoisson}
\
\begin{itemize}
\item[(a)] Let $M$ be a right Poisson $A$-module and $N$ be a left Poisson $A$-module. Then $M\otimes_AN$ is a right Poisson $A$-module via
\begin{align*}
(m\otimes n)a=(ma)\otimes n=m\otimes (an),\quad \{ m\otimes n, a\}=\{m, a\}_M\otimes n-m\otimes \{a,n\}_N
\end{align*}
for any $m\in M,n\in N$ and $a\in A$. Moreover, there are two left derived functors
\[
M\otimes_A^L(-): \text{D}^{\text{b}}(A\text{-PMod})\to \text{D}^{\text{b}}(A^{op}\text{-PMod}),\, (-)\otimes_A^LN: \text{D}^{\text{b}}(A^{op}\text{-PMod})\to \text{D}^{\text{b}}(A^{op}\text{-PMod}).
\]
\item[(b)] Let $M,N$ be two right Poisson $A$-modules. Then $\Hom_A(M,N)$ is a left Poisson $A$-module via
\begin{align*}
(a\phi)(m)=\phi(ma)=\phi(m)a,\quad \{a,\phi\}(m)=\phi(\{m,a\}_M)-\{\phi(m),a\}_N
\end{align*}
for any $m\in M, a\in A$ and $\phi\in \Hom_A(M,N)$. Moreover, there are two right derived functors
\[
\RHom_A(M,-): \text{D}^{\text{b}}(A^{op}\text{-PMod})\to \text{D}^{\text{b}}(A\text{-PMod}),\, \RHom_A(-,N): \text{D}^{\text{b}}(A^{op}\text{-PMod})\to \text{D}^{\text{b}}(A\text{-PMod}).
\]
\end{itemize}
\end{prop}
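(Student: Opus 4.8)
The plan has two parts: first I would verify the four concrete Poisson-module structures by checking them directly against Definition~\ref{PMod}, and then promote each to a derived functor using the ring-theoretic properties of $\U(A)$ recorded in Corollary~\ref{PropAPE}. Throughout I identify $A\text{-PMod}$ with $\U(A)\text{-Mod}$ and $A^{op}\text{-PMod}$ with $\U(A)^{op}\text{-Mod}$ by Theorem~\ref{T:Equi} and Lemma~\ref{TensorU}, noting that since $A^{op}$ is again an affine smooth Poisson algebra, Corollary~\ref{PropAPE} applies verbatim to $\U(A)^{op}=\U(A^{op})$.

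For the non-derived statement in (a) I would first check that the proposed bracket descends to $M\otimes_AN$, i.e. that $\{(ma)\otimes n,b\}$ and $\{m\otimes(an),b\}$ agree for all $a,b\in A$; expanding both using the Leibniz rules for the right module $M$ and the left module $N$, the discrepancy is a sum of terms of the shape $m\{a,b\}\otimes n$ and $m\otimes a\{b,n\}_N$ that cancel across the identification $ma\otimes n=m\otimes an$. Then the three axioms of Definition~\ref{PMod} for $M\otimes_AN$ (as a left $A^{op}$-module) split term by term into the corresponding axioms for $M$ and for $N$ together with the Leibniz rules, the one point requiring care being the relative sign between $\{m,a\}_M\otimes n$ and $m\otimes\{a,n\}_N$, which is exactly what is needed for the Jacobi-type identity to hold. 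Part (b) is the same: one checks that $\{a,\phi\}$ is again $A$-linear and that the three axioms reduce to those for $M$ and $N$; here the subtractive mixed term is moreover forced by requiring the tensor--Hom adjunction $\Hom_A(M\otimes_AN,-)\cong\Hom_A(M,\Hom_A(N,-))$ to be an isomorphism of Poisson modules, which gives a conceptual reading of the formula. Alternatively, since $\U(A)\cong V(A,\Omega_A)$ as recalled before Proposition~\ref{PBWP}, both statements are instances of the monoidal and internal-Hom structure on modules over a Lie--Rinehart algebra; see \cite{Hue99, Chem04}.

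For the derived versions the key input is Corollary~\ref{PropAPE}: $\U(A)$ is noetherian, of finite global dimension (being Auslander--regular), and projective --- hence flat --- over its subalgebra $A$. Thus every projective object of $A\text{-PMod}=\U(A)\text{-Mod}$ is flat, indeed projective, when regarded as an $A$-module, and every bounded complex of Poisson modules admits a bounded resolution by such objects. To derive $M\otimes_A(-)$ I replace $N\in\text{D}^{\text{b}}(A\text{-PMod})$ by a bounded complex $P_\bullet$ of projective $\U(A)$-modules: then $M\otimes_AP_\bullet$ is a bounded complex whose terms are Poisson modules by part (a), it computes the ordinary $M\otimes_A^LN$ after forgetting to $\text{D}^{\text{b}}(A\text{-Mod})$ because each $P_i$ is $A$-flat, and the comparison theorem for projective resolutions in $\U(A)\text{-Mod}$ makes it independent of $P_\bullet$ and functorial in the derived category. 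This yields $M\otimes_A^L(-)\colon\text{D}^{\text{b}}(A\text{-PMod})\to\text{D}^{\text{b}}(A^{op}\text{-PMod})$, and $(-)\otimes_A^LN$ is obtained symmetrically by resolving the first variable. For the $\Hom$ functors one resolves the first argument: a bounded projective resolution $P_\bullet\to M$ in $\U(A)^{op}\text{-Mod}$ consists of $A$-projective modules, so $\Hom_A(P_\bullet,N)$ is a bounded complex of Poisson modules by part (b) and computes $\RHom_A(M,N)$, giving $\RHom_A(M,-)$ and $\RHom_A(-,N)$ as functors into $\text{D}^{\text{b}}(A\text{-PMod})$.

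The only genuinely non-formal point --- and the one I would watch most carefully --- is the compatibility of these derived functors with the forgetful functors to $\text{D}^{\text{b}}(A\text{-Mod})$: everything hinges on having a single resolution that is at once $\U(A)$-projective, so that its terms carry canonical Poisson structures and the resulting complex is well defined in the derived category of Poisson modules, and $A$-flat (or $A$-projective), so that it also computes the underlying derived tensor or Hom over $A$. That both hold simultaneously is precisely the projectivity of $\U(A)$ over $A$ from Corollary~\ref{PropAPE}, while finite global dimension of $\U(A)$ is what confines the output to the bounded derived category.
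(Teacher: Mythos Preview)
Your argument is correct and complete; the only substantive difference from the paper is the side on which you resolve. The paper handles (b) first and uses \emph{injective} resolutions in the target variable: for an injective $\U(A)^{op}$-module $I$ it writes
\[
\Hom_A(-,I)\ \cong\ \Hom_A\bigl(-,\Hom_{\U(A)}({}_A\U(A),I)\bigr)\ \cong\ \Hom_{\U(A)}\bigl(-\otimes_A\U(A),I\bigr),
\]
and since $\U(A)$ is flat over $A$ this shows $I$ is also $A$-injective; then the existence criterion of \cite[Theorem~5.1]{Hart} produces $\RHom_A(M,-)$. You instead resolve the \emph{source} by projectives in $\U(A)$-Mod (or $\U(A)^{op}$-Mod), using that projective $\U(A)$-modules are $A$-projective because $\U(A)$ itself is $A$-projective, and invoke finite global dimension of $\U(A)$ to keep everything bounded. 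Both routes rest on Corollary~\ref{PropAPE}; the paper's needs only flatness of $\U(A)$ over $A$ together with Hartshorne's abstract existence theorem, whereas yours uses the stronger projectivity plus finite global dimension but in exchange yields explicit bounded models and avoids any appeal to an external existence result. The non-derived verifications are treated the same way in both --- the paper also calls them straightforward and points to the Lie--Rinehart literature \cite{Hue99,Chem04}.
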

\begin{proof}
We only prove (b) and (a) follows in the same fashion. First of all, it is straightforward to see that the Poisson left $A$-module structure is well-defined on $\Hom_A(M,N)$ in the sense of Definition \ref{PMod}. Generally speaking, let $I^\bullet$ be a acyclic complex consisting of injective modules in the category $\U(A)^{op}\text{-Mod}$. For each term $I^i$ in the complex $I^\bullet$, we have
\[\Hom_A(-,I^i_A)=\Hom_A(-,\Hom_{\U(A)}(\, \!_A\U(A),I^i))=\Hom_A(-\otimes_A\U(A),I^i).
\]
Since $\U(A)$ is projective hence flat over $A$ by Corollary \ref{PropAPE}, one sees that $\Hom_A(-,I^i_A)$ is an exact functor. Hence $I^\bullet$ a acyclic complex consisting of injective modules in the category $A\text{-Mod}$ via the forgetful functor. Hence the complex $\Hom_A(M,I^\bullet)$ is acyclic after applying $\Hom_A(M,-)$ to $I^\bullet$. Thus the right derived functor of $\Hom_A(M,-)$ exists by \cite[Theorem 5.1]{Hart}. The argument for $\RHom_A(-,N)$ is same.
\end{proof}

\begin{cor}\label{P:PMTwist}
Let $\delta_1,\delta_2\in \Der_P(A)$ be two Poisson derivations of $A$.
\begin{itemize}
\item[(a)] Let $M$ be a right Poisson $A$-module and $N$ be a left Poisson $A$-module. Then
\[(M^{\delta_1})\otimes_A^L (\, \!^{\delta_2} N)\cong (M\otimes_A^LN)^{(\delta_1-\delta_2)}\]
in $\text{D}^{\text{b}}(A^{op}\text{-PMod})$.
\item[(b)] Let $M,N$ be two right Poisson $A$-modules. Then
\[\RHom_A(M^{\delta_1},N^{\delta_2})\cong\, \!^{(\delta_1-\delta_2)}\, \RHom_A(M,N)\]
in $\text{D}^{\text{b}}(A\text{-PMod})$.
\end{itemize}
\end{cor}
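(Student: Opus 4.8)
The plan is to deduce both parts from \Cref{P:THPoisson} by comparing the two Poisson module structures on the \emph{same} underlying derived $A$-module. Since $M^{\delta_1}$ and $N^{\delta_2}$ agree with $M$ and $N$ as $A$-modules (and as $\U(A)$-modules the twist only modifies the action of the $H_a$), the derived tensor products $M^{\delta_1}\otimes_A^L(\!^{\delta_2}N)$ and $M\otimes_A^LN$ are canonically identified in $\text{D}^{\text{b}}(A\text{-Mod})$ via the forgetful functor; likewise for $\RHom_A$. So the content of the corollary is purely about the twisted Poisson bracket on the cohomology, and the identity map of complexes is the candidate isomorphism. I would first prove the statement at the level of a single module (undotted version of the derived statement), then promote it to the derived category by choosing functorial (co)resolutions.

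For part (b): fix $\phi\in\Hom_A(M,N)$ and compute $\{a,\phi\}$ using the formula in \Cref{P:THPoisson}(b) but with the brackets $\{-,-\}_{M^{\delta_1}}=\{-,-\}_M+(-)\delta_1$ and $\{-,-\}_{N^{\delta_2}}=\{-,-\}_N+(-)\delta_2$. The correction terms are $\phi(m\,\delta_1(a)) - (\phi(m))\,\delta_2(a) = (\delta_1(a)-\delta_2(a))\,\phi(m)$ using that $\phi$ is $A$-linear; comparing with the bracket in $\!^{(\delta_1-\delta_2)}\RHom_A(M,N)$, which adds $(\delta_1-\delta_2)(a)\cdot\phi$ to $\{a,\phi\}_{\Hom_A(M,N)}$, shows the two Poisson structures on $\Hom_A(M,N)$ literally coincide. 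Part (a) is the mirror computation: for $m\otimes n$ the two correction terms are $(m\,\delta_1(a))\otimes n - m\otimes(\delta_2(a)\,n) = (\delta_1(a)-\delta_2(a))(m\otimes n)$ by the balancing of $\otimes_A$, which is exactly the twist by $\delta_1-\delta_2$ applied to $M\otimes_A N$. So at the module level both isomorphisms are the identity map.

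To pass to the derived category, recall from the proof of \Cref{P:THPoisson} that for $\RHom_A$ one takes an injective resolution $I^\bullet$ of $N$ in $\U(A)^{op}\text{-Mod}$, which remains a complex of $A$-injectives via the forgetful functor (using that $\U(A)$ is $A$-flat, \Cref{PropAPE}); the point is that $(I^\bullet)^{\delta_2}$ is again such an injective resolution of $N^{\delta_2}$, because twisting by an automorphism of $\U(A)$ is an autoequivalence of $\U(A)^{op}\text{-Mod}$ and hence preserves injectives and acyclicity. Then $\RHom_A(M^{\delta_1},N^{\delta_2})$ is computed by $\Hom_A(M,I^\bullet)$ with the twisted Poisson structure, and the module-level identity above applied termwise gives the claimed isomorphism in $\text{D}^{\text{b}}(A\text{-PMod})$, compatible with differentials since the differentials are $\U(A)^{op}$-linear and twisting does not change them. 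For (a) one argues symmetrically with a flat (e.g. the bar- or Rinehart-type) resolution of $M$ or $N$ in $\U(A)\text{-Mod}$, again noting that the $\delta_i$-twist of such a resolution is a resolution of the twisted module.

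The only genuinely delicate point, and the one I would write out carefully, is the claim that twisting a projective/injective resolution in $\U(A)\text{-Mod}$ (resp.\ $\U(A)^{op}\text{-Mod}$) by $\varphi(\delta_i)\in\Id_{\mathscr F}\U(A)$ yields a resolution of the twisted module \emph{that still forgets to an acyclic complex of $A$-(co)acyclics}. This is immediate once one observes that the functor $\!^{\varphi(\delta)}(-)$ is an exact autoequivalence of $\U(A)\text{-Mod}$ restricting to the identity functor on $A\text{-Mod}$ (since $\varphi(\delta)$ fixes $M_A$ pointwise by \Cref{ModAut}); hence everything the proof of \Cref{P:THPoisson} needed about the untwisted resolution transports verbatim. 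Everything else is the bookkeeping of the two bracket formulas above.
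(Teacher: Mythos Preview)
Your proposal is correct and follows essentially the same route as the paper's proof: a direct bracket computation at the level of $\Hom_A$ (resp.\ $\otimes_A$) showing the identity map intertwines the two Poisson structures, followed by promotion to the derived category using that $(-)^{\delta}$ is an exact autoequivalence of $\U(A)^{op}\text{-Mod}$ preserving injectives, so that $(I^\bullet)^{\delta_2}$ is an injective resolution of $N^{\delta_2}$. The paper writes out only part (b) and leaves (a) as analogous, exactly as you do.
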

\begin{proof}
We will prove (b) and (a) follows similarly. First of all, we show the isomorphism on the Hom level. Simply write $X=\Hom_A(M^{\delta_1},N^{\delta_2})$, $Y=\!^{(\delta_1-\delta_2)}\Hom_A(M,N)$ and $Z=\Hom_A(M,N)$. It is clear that we have natural isomorphisms of $A$-modules $X\cong Z\cong Y$. For any $a\in A$, $\phi\in X$ and $m\in M^{\delta_1}$, we have
\begin{align*}
\{a,\phi\}_X(m)=&\, \phi(\{m,a\}_{\delta_1})-\{\phi(m),a\}_{\delta_2}=\phi(\{m,a\}_M+m\delta_1(a))-\left(\{\phi(m),a\}_N+\phi(m)\delta_2(a)\right)\\
=&\, \phi(\{m,a\}_M)-\{\phi(m),a\}_N+\phi(m)(\delta_1-\delta_2)(a)=\{a,\phi\}_Z(m)+[(\delta_1-\delta_2)(a)\phi](m)\\
=&\, \{a,\phi\}_Y(m).
\end{align*}
Hence $X\cong Y$ as left Poisson $A$-modules.

More generally, let $N\to I^\bullet$ be an injective resolution of $N$ in the category $\U(A)^{op}\text{-Mod}$. Note that $I$ is injective in the category $\U(A)^{op}\text{-Mod}$ if and only if $I^\delta$ is injective in the category $\U(A)^{op}\text{-Mod}$ for some $\delta\in \Der_P(A)$. Hence $N^{\delta_2}\to (I^\bullet)^{\delta_2}$ is an injective resolution of $N^{\delta_2}$ in the category $\U(A)^{op}\text{-Mod}$. By the argument above, we have the following isomorphisms in  $\text{D}^{\text{b}}(A\text{-PMod})$.
\begin{align*}
\RHom_A(M^{\delta_1},N^{\delta_2})&\, =\RHom_A(M^{\delta_1},(I^\bullet)^{\delta_2})=\Hom_A(M^{\delta_1},(I^\bullet)^{\delta_2})\\
&\, =\!^{(\delta_1-\delta_2)}\Hom_A(M,I^\bullet)=\!^{(\delta_1-\delta_2)}\RHom_A(M,N).
\end{align*}

\end{proof}

\section{Homological unimodularity}\label{S:HomM}
%It is a well-known fact due to Bernstein that $\Omega_A$ is endowed with a left $\mathcal D$-module structure (see \cite[p. 226]{Bo}).
We still suppose $A$ is an affine smooth Poisson algebra of dimension $\ell$. In this section, we study the module of differential forms of maximal degree for $A$, that is $\omega_A=\wedge^\ell\, \Omega_A$. Note that $\omega_A$ is a locally free $A$-module of rank one. It is a well-known fact that $\omega_A$ is equipped with a Poisson $A$-module structure, where any element $H_a\in \U(A)$ acts on $\omega_A$ as a Lie derivation by the adjoint action; see \cite{Bo, Chem94}. We will explain this Poisson structure using homological algebra.

Let $M$ be a Poisson $A$-bimodule, or equivalently, a $\U(A)$-bimodule. The left and right Poisson brackets on $M$ are given by $H_am$ and $mH_a$ for any $m\in M$ and $a\in A$. The following lemma lies in the same fashion of Proposition \ref{P:THPoisson}.
\begin{lem}\label{PAExt}
Let $M$ be a Poisson $A$-bimodule. Then $\Hom_{A^e}(A,M)$ is a right Poisson $A$-module via
\begin{align*}
m\cdot a=ma=am,\quad \{m,a\}=mH_a-H_am
\end{align*}
for any $m\in \Hom_{A^e}(A,M)$ and $a\in A$. Moreover, there exists a right derived functor
\[\RHom_{A^e}(A,-): \text{D}^{\text{b}}(A^e\text{-PMod})\to \text{D}^{\text{b}}(A^{op}\text{-PMod}).\]
\end{lem}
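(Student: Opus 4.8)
The plan is to mimic the proof of \Cref{P:THPoisson}(b), which already handled $\RHom_A(M,-)$ on the one-sided module categories, now adapted to the bimodule situation with $A^e = A\otimes A^{op}$ playing the role of $A$. The first step is to check, by a direct computation in the sense of \Cref{PMod}, that the prescribed formulas $m\cdot a = ma = am$ and $\{m,a\} = mH_a - H_a m$ do define a right Poisson $A$-module structure on $\Hom_{A^e}(A,M)$. Here one uses that an element $m \in \Hom_{A^e}(A,M)$ is, concretely, an element of the $A$-bimodule $M$ that is central (killed by $M_a \otimes 1 - 1\otimes M_a$ for all $a$), so that $ma = am$ holds in $M$ and the two brackets $mH_a$ and $H_a m$ again land in $\Hom_{A^e}(A,M)$; the Poisson module axioms then reduce to the defining relations of $\U(A)$ in \Cref{UnivP} together with the Jacobi-type identity for the commutator $[H_a, -]$ acting on the centralizer of $A$ inside the $\U(A)$-bimodule $M$. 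This is the bimodule analogue of the one-sided verification already carried out, and is routine.

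The second step is to produce the right derived functor. Following the template in the proof of \Cref{P:THPoisson}, let $I^\bullet$ be a complex of injective objects in $\U(A)^e\text{-Mod}$ (equivalently $A^e\text{-PMod}$); I would show each term $I^i$, when restricted along the forgetful functor to $A^e\text{-Mod}$, remains injective. The key identity is the adjunction
\[
\Hom_{A^e}(-, I^i_{A^e}) = \Hom_{A^e}\bigl(-, \Hom_{\U(A)^e}({}_{A^e}\U(A)^e, I^i)\bigr) = \Hom_{\U(A)^e}\bigl(-\otimes_{A^e}\U(A)^e, I^i\bigr),
\]
so exactness of $\Hom_{A^e}(-,I^i_{A^e})$ follows once $\U(A)^e = \U(A)\otimes\U(A)^{op}$ is flat over $A^e = A\otimes A^{op}$. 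By \Cref{TensorU}, $\U(A^e) \cong \U(A)^e$, and $A^e$ is again an affine smooth Poisson algebra (of dimension $2\ell$), so \Cref{PropAPE} applied to $A^e$ gives that $\U(A^e)$ is projective, hence flat, over $A^e$. Therefore $I^\bullet$ restricts to a complex of injectives in $A^e\text{-Mod}$, and applying $\Hom_{A^e}(A,-)$ preserves acyclicity; the existence of $\RHom_{A^e}(A,-)$ on $\text{D}^{\text{b}}(A^e\text{-PMod})$ then follows from \cite[Theorem 5.1]{Hart}, exactly as before. One checks that the target lands in $\text{D}^{\text{b}}(A^{op}\text{-PMod})$ because the Poisson structure constructed in the first step is a right $A$-module structure.

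The only genuine point requiring care—rather than a true obstacle—is verifying that the Poisson bracket $\{m,a\} = mH_a - H_a m$ is well defined, i.e. that it sends the centralizer $\Hom_{A^e}(A,M) \subseteq M$ back into itself: one must confirm that $mH_a - H_a m$ is again annihilated by $M_b\otimes 1 - 1\otimes M_b$ for all $b$, which follows from the relation $M_{\{a,b\}} = [H_a, M_b]$ in $\U(A)$ and the centrality of $m$. Everything else is a transcription of the one-sided argument with $A$ replaced by $A^e$, so I expect the proof to be short, with the bulk of the verification deferred to "it is straightforward to check" in the style of \Cref{P:THPoisson}.
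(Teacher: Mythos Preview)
Your proposal is correct and follows essentially the same route as the paper's own proof: both defer the verification of the right Poisson module axioms on $\Hom_{A^e}(A,M)$ to a direct check, and both establish the derived functor by showing injectives in $\U(A)^e\text{-Mod}$ restrict to injectives in $A^e\text{-Mod}$ via the flatness of $\U(A)^e=\U(A^e)$ over $A^e$ (from \Cref{TensorU} and \Cref{PropAPE}), then invoking \cite[Theorem 5.1]{Hart}. Your write-up is in fact slightly more explicit than the paper's---you spell out the adjunction and the reason $A^e$ is again affine smooth---but the argument is the same.
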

\begin{proof}
It is direct to check that the right Poisson module structure is well-defined on $\Hom_{A^e}(A,M)=\{m\in M\,|\,am=ma\, \forall\, a\in A\}$ with respect to Definition \ref{PMod}. Now consider a acyclic complex $I^\bullet$ consisting of injective objects in the category $\U(A)^e\text{-Mod}$. By Corollary \ref{PropAPE}, $\U(A)^e=\U(A^e)$ is projective hence flat over $A^e$ via the forgetful functor. Hence the complex $\Hom_{A^e}(A,I^\bullet)$ is acyclic. Thus the right derived functor of $\Hom_{A^e}(A,-)$ exists by \cite[Theorem 5.1]{Hart}.
\end{proof}

Note that the tensor algebra $A^e=A\otimes A^{op}$ is a Poisson $A$-bimodule, or equivalently, a left module over $\U(A^e)=\U(A)^e$ by Lemma \ref{TensorU}. Therefore we can take $M=A^e$ in Lemma \ref{PAExt}. It is clear that the right Poisson bracket on $\Hom_{A^e}(A,A^e)$ is given by the adjoint action. Next apply \eqref{SerreB} to get
\[
\Ext_{A^e}^i(A,A^e)=
\begin{cases}
0  &  i\neq \ell\\
\wedge^\ell\, \Der_k(A)=\omega_A^*&  i=\ell
\end{cases}.
\]
Hence $\omega_A^*$ is a right Poisson $A$-module, where the Poisson bracket is induced by the adjoint action. It follows from Proposition \ref{P:THPoisson} (b), one sees that $\omega_A=\Hom_A(\omega_A^*,A)$ is a left Poisson $A$-module.

Now choose any $p\in \Spec(A)$. We know $\Omega_{A_p}$ is a free $A_p$-module of rank $\ell$, and $\omega_{A_p}\cong A_{p}$ as $A_{p}$-modules. Hence $\omega_{A_p}\cong \!^{\delta} A_{p}$ for some $\delta\in \Der_P(A_p)$ by Lemma \ref{L:APDer}.
\begin{deff}\label{LMD}
The {\it local modular derivation} of $A$ at some $p\in \Spec(A)$ is defined to be the Poisson derivation $\delta\in \Der_P(A_p)$ such that $\omega_{A_p}\cong \!^\delta A_p$ as left Poisson $A_p$-modules. Moreover, the {\it local modular class} of $A$ is the class of $\delta$ in $\mathscr{HP}^1(A_p)$.
\end{deff}
As a consequence, the local Poisson structure on $\omega_A$ is uniquely determined by the local modular class of $A$.

\begin{lem}\label{LMDF}
Let $A$ be an affine smooth Poisson algebra, and $p\in \Spec(A)$. Suppose $x_1,x_2,\dots,x_\ell$ is a regular system of parameters of $A_p$. Then the local modular derivation $\delta$ of $A$ at $p$ is uniquely determined, up to some log-Hamiltonian derivation, by its values on $x_1,\dots,x_\ell$. Moreover, we have $\delta(x_i)=\sum_{1\le j\le \ell} a_{ijj}$, where  coefficients $a_{ijk}\in A_p$ are given by $d\{x_i,x_j\}=\sum_{1\le k\le \ell}a_{ijk}dx_k$ in the module of K\"ahler differentials of $A_p$.
\end{lem}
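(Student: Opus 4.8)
The plan is to compute $\omega_{A_p}$ and its Poisson module structure directly from the explicit local generator of $\omega_{A_p}$, using the adjoint action description of the Poisson bracket on $\omega_A^{**}\cong\omega_A$. Since $x_1,\dots,x_\ell$ is a regular system of parameters of the smooth local ring $A_p$, the elements $dx_1,\dots,dx_\ell$ form a free $A_p$-basis of $\Omega_{A_p}$, so $e:=dx_1\wedge\cdots\wedge dx_\ell$ is a free generator of $\omega_{A_p}=\wedge^\ell\Omega_{A_p}$. By Lemma \ref{L:APDer}, writing $M=\omega_{A_p}$ and choosing the generator $e$, the local modular derivation $\delta$ is the unique Poisson derivation with $\{a,e\}_{\omega}=\delta(a)\,e$ for all $a\in A_p$; since $\delta\in\Der_k(A_p)$ is a derivation it is determined by its values $\delta(x_1),\dots,\delta(x_\ell)$, and by the last assertion of Lemma \ref{L:APDer} the class of $\delta$ in $\mathscr{HP}^1(A_p)$ is independent of the choice of generator, i.e. $\delta$ is well-defined up to a log-Hamiltonian derivation. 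This gives the first (uniqueness) statement.

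Next I would identify the Poisson action of $H_a$ on $\omega_A$ as the Lie derivative $\mathcal L_{\{a,-\}}$ along the Hamiltonian vector field $\{a,-\}$, which is exactly the adjoint action referred to just after Lemma \ref{PAExt} (this is the classical Cartan formula statement; one can also read it off from the $A^e$-module structure on $\Ext^\ell_{A^e}(A,A^e)=\omega_A^*$ and the duality $\omega_A=\Hom_A(\omega_A^*,A)$ of Proposition \ref{P:THPoisson}(b)). So for the top form $e=dx_1\wedge\cdots\wedge dx_\ell$ one has
\[
\{a,e\}_{\omega}=\mathcal L_{\{a,-\}}(e)=\sum_{1\le i\le\ell} dx_1\wedge\cdots\wedge d\{a,x_i\}\wedge\cdots\wedge dx_\ell .
\]
Taking $a=x_i$ and expanding $d\{x_i,x_j\}=\sum_{1\le k\le\ell}a_{ijk}\,dx_k$ in the $j$-th slot, only the $k=j$ term survives the wedge (all other terms contain a repeated $dx_j$), giving $\{x_i,e\}_\omega=\big(\sum_{1\le j\le\ell}a_{ijj}\big)e$. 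Comparing with $\{x_i,e\}_\omega=\delta(x_i)e$ yields $\delta(x_i)=\sum_{1\le j\le\ell}a_{ijj}$, as claimed.

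The only real point requiring care is the identification of the $\U(A)$-action on $\omega_A$ with the Lie-derivative/adjoint action, so that the Cartan-type formula for $\mathcal L_{\{a,-\}}$ on top forms is available; everything after that is the short wedge-product computation above. An alternative that avoids invoking Cartan's formula as a black box is to feed the Koszul resolution of $A$ over $A_p^e$ (built from $dx_1,\dots,dx_\ell$, as in Proposition \ref{PBWP} applied to $A_p^e$) into the description of the right Poisson structure on $\Ext^\ell_{A^e}(A,A^e)$ from Lemma \ref{PAExt}, track the adjoint action $m\mapsto mH_{x_i}-H_{x_i}m$ through the identification with $\wedge^\ell\Der_k(A_p)$, and then dualize via $\omega_{A_p}=\Hom_{A_p}(\omega_{A_p}^*,A_p)$ using Proposition \ref{P:THPoisson}(b); this reduces the claim to the same sign/coefficient bookkeeping, now purely at the level of the chosen bases. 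Either way the localization compatibility in Lemma \ref{Local}(c) guarantees the computation may be carried out entirely in $A_p$.
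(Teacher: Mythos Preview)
Your proposal is correct and follows essentially the same approach as the paper: both identify the $\U(A_p)$-action on $\omega_{A_p}$ with the adjoint/Lie-derivative action on the generator $e=dx_1\wedge\cdots\wedge dx_\ell$, then compute $\{x_i,e\}_\omega$ by expanding $d\{x_i,x_j\}$ in the basis $dx_1,\dots,dx_\ell$ and observing only the diagonal term survives. The paper phrases the key identification via the Koszul resolution (your ``alternative'' route) rather than quoting Cartan's formula directly, but the resulting formula and the subsequent wedge bookkeeping are identical.
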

\begin{proof}
Note that $A_p$ is a regular local algebra, whose unique maximal ideal is generated by its regular system of parameters $x_1,x_2,\dots,x_\ell$. Since every element of $A_p$ can be represented as a rational function in terms of $x_1,x_2,\dots,x_\ell$, it is clear that any derivation of $A_p$ is uniquely determined by its values on $x_1,\dots,x_\ell$. This applies to the local modular derivation $\delta\in \Der_P(A_p)$.

Since $dx_1,\dots,dx_\ell$ forms a free basis for the K\"ahler differential $\Omega_{A_p}$ of $A_p$, one sees that $dx_1\wedge dx_2\wedge \cdots \wedge dx_\ell$ is a basis for the differential forms of maximal order $\omega_A$; and hence is a dual basis for $\omega_{A_p}^*$. By applying the standard Koszul resolution of $A_p$, one concludes that the local modular derivation $\delta$ is determined, up to some log-Hamiltonian derivation, by the adjoint action such that
\begin{align*}%\label{Modular}
\delta(a) dx_1\wedge dx_2\wedge \cdots \wedge dx_\ell=\sum_{1\le i\le \ell} (-1)^{i+1} d(\{a,x_i\}\, dx_1\wedge \cdots \wedge \widehat{dx_i}\wedge \cdots \wedge dx_\ell)
\end{align*}
for any $a\in A_p$. In particular, we have
\begin{align*}
\delta(x_i) dx_1\wedge dx_2\wedge \cdots \wedge dx_\ell&\, =\sum_{1\le j\le \ell}  (-1)^{j+1} d(\{x_i,x_j\}\, dx_1\wedge \cdots \wedge \widehat{dx_j}\wedge \cdots \wedge dx_\ell)\\
&\, =\sum_{1\le j\le \ell} (-1)^{j+1} d(\sum_{k=1}^\ell a_{ijk}\, x_kdx_1\wedge \cdots \wedge \widehat{dx_j}\wedge \cdots \wedge dx_\ell)\\
&\, =\sum_{1\le j\le \ell} (-1)^{j+1} a_{ijj}\,dx_j\wedge dx_1\wedge \cdots \wedge \widehat{dx_j}\wedge \cdots \wedge dx_\ell\\
&\, =(\sum_{1\le j\le \ell}  a_{ijj})\, dx_1\wedge dx_2\wedge \cdots \wedge dx_\ell.
\end{align*}
Hence $\delta(x_i)=\sum_{1\le j\le \ell} a_{ijj}$.
\end{proof}
In particular when $A$ is CY, then $A$ has trivial canonical bundle by Proposition \ref{ComCY}. Hence the Poisson structure on $\omega_A\cong \!^\delta A$ is uniquely determined, up to some log-Hamiltonian derivation, by one single $\delta\in \Der_P(A)$. In \cite[\S 2.2]{LWW}, the Poisson derivation $\delta$ is said to be the {\it modular derivation} of $A$. And the unique class of $\delta$ in $\mathscr{HP}^1(A)$ is called the {\it modular class} of $A$ \cite{Dol}.

\begin{deff}\label{Hunimod}
Let $A$ be an affine smooth Poisson algebra. We say $A$ is {\it homologically unimodular} if $\omega_A\cong A$ as left Poisson $A$-modules, or equivalently, $\omega_A=A$ in the Poisson Picard group $\Pic_P(A)$ of $A$.
\end{deff}

\begin{lem}\label{HomU}
Let $A$ be an affine smooth Poisson algebra of dimension $\ell$. Then the following are equivalent.
\begin{enumerate}
\item $A$ is homologically unimodular.
\item $A$ is CY and is unimodular in the sense of \cite[\S 2.2]{LWW}.
\item $\RHom_{A^e}(A,A^e)=A[-\ell]$ in $\text{D}^{\text{b}}(A^{op}\text{-PMod})$.
\end{enumerate}
\end{lem}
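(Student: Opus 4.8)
The plan is to translate each of the three conditions into a statement about the single Poisson-module class $\omega_A$, and then to run the two equivalences (a)$\Leftrightarrow$(b) and (a)$\Leftrightarrow$(c). The two structural inputs are: the identification of $\RHom_{A^e}(A,A^e)$ with $\omega_A^*[-\ell]$ in $\text{D}^{\text{b}}(A^{op}\text{-PMod})$, coming from Lemma \ref{PAExt} together with \eqref{SerreB}; and the dictionary of Lemma \ref{ModAut} between twists by Poisson derivations and automorphisms in $\Id_{\mathscr F}\U(A)$, under which log-Hamiltonian derivations correspond to inner automorphisms.

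\emph{Step 1: (a)$\Leftrightarrow$(c).} By Lemma \ref{PAExt}, $\RHom_{A^e}(A,A^e)$ is a bounded complex in $\text{D}^{\text{b}}(A^{op}\text{-PMod})$, and by \eqref{SerreB} its cohomology vanishes outside degree $\ell$, where it is the right Poisson module $\omega_A^*$, with bracket induced by the adjoint action. A bounded complex whose cohomology is concentrated in a single degree is isomorphic, in the derived category, to that cohomology placed in that degree (apply the truncation functors); hence $\RHom_{A^e}(A,A^e)\cong\omega_A^*[-\ell]$ in $\text{D}^{\text{b}}(A^{op}\text{-PMod})$. Therefore (c) holds if and only if $\omega_A^*\cong A$ as right Poisson modules. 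Now $\omega_A^*$ is the inverse of $\omega_A$ in the Poisson Picard group (Definition \ref{D:PicardG}, \Cref{RankonePMod}); equivalently, $\omega_A=\Hom_A(\omega_A^*,A)$ by Proposition \ref{P:THPoisson}(b). In either description, $\omega_A^*\cong A$ as Poisson modules if and only if $\omega_A\cong A$ as Poisson modules, which is exactly (a) by Definition \ref{Hunimod}.

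\emph{Step 2: (a)$\Leftrightarrow$(b).} Assume (a). Then $\omega_A\cong A$ as $A$-modules, so $A$ has trivial canonical bundle; being affine and smooth, $A$ is Calabi-Yau by Proposition \ref{ComCY}. Hence the modular derivation $\delta\in\Der_P(A)$ of \cite[\S 2.2]{LWW} is defined, and by Lemma \ref{L:APDer} (together with the discussion following Lemma \ref{LMDF}) its class in $\mathscr{HP}^1(A)$ is the unique one with $\omega_A\cong{}^{\delta}A$ as left Poisson modules. Since also $\omega_A\cong A={}^{0}A$, the uniqueness clause of Lemma \ref{L:APDer} forces the class of $\delta$ in $\mathscr{HP}^1(A)$ to be zero, i.e. $A$ is unimodular in the sense of \cite[\S 2.2]{LWW}; this is (b). Conversely, assume (b). By Proposition \ref{ComCY} a Calabi-Yau $A$ is smooth with trivial canonical bundle, so $\omega_A\cong A$ as $A$-modules and Lemma \ref{L:APDer} gives $\omega_A\cong{}^{\delta}A$ with $\delta$ the modular derivation. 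Unimodularity says the class of $\delta$ in $\mathscr{HP}^1(A)$ vanishes, so $\delta$ is log-Hamiltonian, and by Lemma \ref{ModAut} and the correspondence \eqref{DerInner} the automorphism $\varphi(\delta)$ of $\U(A)$ is inner; hence ${}^{\delta}A\cong A$ as $\U(A)$-modules, that is, $\omega_A\cong A$ as left Poisson modules, which is (a).

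All of the ingredients — well-definedness of the twisted Poisson structures, and the short computation identifying the twisting unit with a log-Hamiltonian derivation — are routine from the earlier sections. The one point that genuinely needs care is the left/right bookkeeping in Step 1, namely that $\omega_A^*\cong A$ as a \emph{right} Poisson module is equivalent to $\omega_A\cong A$ as a \emph{left} Poisson module; this is exactly the place where one invokes that $\Pic_P(A)$ is a group with $\omega_A^*=\omega_A^{-1}$ (equivalently, the $\Hom_A(-,A)$-duality of Proposition \ref{P:THPoisson}(b)), so I expect that to be the only real obstacle.
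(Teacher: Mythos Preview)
Your proof is correct and follows essentially the same approach as the paper: both arguments hinge on the identification $\RHom_{A^e}(A,A^e)\cong\omega_A^*[-\ell]$ and the observation that $\omega_A^*\cong A$ as right Poisson modules if and only if $\omega_A\cong A$ as left Poisson modules. The only difference is organizational---the paper runs the cycle (a)$\Rightarrow$(b)$\Rightarrow$(c)$\Rightarrow$(a) and dismisses the last two implications as ``clear'', whereas you prove the two equivalences (a)$\Leftrightarrow$(c) and (a)$\Leftrightarrow$(b) separately and spell out the details (including the inner-automorphism argument for (b)$\Rightarrow$(a), which the paper does not make explicit).
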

\begin{proof}
(a)$\Rightarrow$(b) We have $\Ext_{A^e}^\ell(A,A^e)=\Hom_A(\omega_A,A)=A$. So $A$ is CY by \Cref{ComCY}. Moreover, one sees that the modular derivation of $A$ ($A$ has trivial canonical bundle) is zero in $\mathscr{HP}^1(A)$. Hence $A$ is unimodular in the sense of \cite[\S 2.2]{LWW}.

(b)$\Rightarrow$(c) and (c)$\Rightarrow$(a) are clear since $\omega_A^* \cong A$ as right Poisson $A$-modules if and only if $\omega_A\cong A$ as left Poisson $A$-modules.
\end{proof}

\section{Poincar\'e duality between Poisson homology and Poisson cohomology}\label{S:PoinP}
In this section, we study Poincar\'e duality between Poisson homology and cohomology following the idea in Van den Bergh duality for Hochschild homology and cohomology \cite{VDB2}. This involves the Serre invertible bimodule $\omega_A^*=\Ext_{A^e}^\ell(A,A^e)$, i.e., the dual module of the differential forms of maximal degree, which plays the same role as the Berezinian module in the general context of Lie-Rinehart algebras \cite{Chem94, Hue99}.

\begin{prop}\label{ExtEq}
Let $A$ be an affine smooth Poisson algebra. Then we have the following isomorphism
\[
\RHom_{\U(A)}(A,\U(A))=\RHom_{A^e}(A,A^e)
\]
in $\text{D}^{\text{b}}(A^{op}\text{-PMod})$.
\end{prop}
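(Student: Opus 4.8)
The plan is to pin down $\RHom_{\U(A)}(A,\U(A))$ through the Rinehart resolution of $A$ and to recognise its unique cohomology, \emph{together with its right Poisson $A$-module structure}, as $\omega_A^*$. The right-hand side is already understood: by \eqref{SerreB} one has $\Ext^i_{A^e}(A,A^e)=0$ for $i\neq\ell$ and $\Ext^\ell_{A^e}(A,A^e)=\omega_A^*$, so $\RHom_{A^e}(A,A^e)\simeq\omega_A^*[-\ell]$ in $\text{D}^{\text{b}}(A^{op}\text{-PMod})$, where $\omega_A^*$ carries the adjoint (Lie-derivative) right Poisson structure singled out after Lemma \ref{PAExt}. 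Thus it suffices to prove that $\RHom_{\U(A)}(A,\U(A))\simeq\omega_A^*[-\ell]$ in $\text{D}^{\text{b}}(A^{op}\text{-PMod})$ too, bearing in mind that the right Poisson $A$-structure on $\RHom_{\U(A)}(A,\U(A))$ is nothing but the right $\U(A)$-module structure induced by right multiplication on $\U(A)$, read through the equivalence $A^{op}\text{-PMod}\equiv\U(A)^{op}\text{-Mod}$.

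Since $A$ is affine smooth of dimension $\ell$, each $\Omega_A^i$ is a finitely generated projective $A$-module and, by \cite[Lemma 4.1]{Rinehart}, the Rinehart complex $P_\bullet=(\U(A)\otimes_A\Omega_A^\bullet,\partial_\bullet)$ (with $\partial_\bullet$ as in \eqref{PHom} for $M=\U(A)$) is a length-$\ell$ resolution of $A$ by finitely generated projective left $\U(A)$-modules. Applying $\Hom_{\U(A)}(-,\U(A))$ and using that each $P_i=\U(A)\otimes_A\Omega_A^i$ is an induced module with $\Omega_A^i$ finitely generated projective over $A$, I obtain the complex of right $\U(A)$-modules $C^\bullet=\bigl(\wedge^\bullet\Der_k(A)\bigr)\otimes_A\U(A)$ computing $\RHom_{\U(A)}(A,\U(A))$; in particular $\Ext^\ell_{\U(A)}(A,\U(A))=H^\ell(C^\bullet)=\operatorname{coker}\!\bigl(\wedge^{\ell-1}\Der_k(A)\otimes_A\U(A)\to\omega_A^*\otimes_A\U(A)\bigr)$. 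That $C^\bullet$ has no cohomology in degrees $<\ell$ either follows from standard facts about filtered algebras: $\U(A)$ is Auslander-regular (Corollary \ref{PropAPE}) with Cohen-Macaulay associated graded (Proposition \ref{PBWP}), and $\GKdim_{\U(A)}(A)=\ell$, so the grade of $A$ equals $\ell$; alternatively this drops out of the local computation below. There is now an obvious candidate morphism $\psi\colon\omega_A^*\to\Ext^\ell_{\U(A)}(A,\U(A))$, namely the cokernel projection restricted along $\omega_A^*=\omega_A^*\otimes_AA\hookrightarrow\omega_A^*\otimes_A\U(A)$ (inclusion $A\hookrightarrow\U(A)$ via $M$), and $\psi$ is clearly a map of the underlying right $A$-modules.

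The crux of the argument — and the step I expect to be the main obstacle — is to show that $\psi$ is bijective \emph{and} compatible with the two Poisson brackets. Both assertions may be checked one prime at a time: source and target are finitely generated over $A$, so Lemma \ref{Local}(d),(e) reduces injectivity, surjectivity, and the Poisson-morphism property to the localisations $\psi_p$ for $p\in\Spec A$, and by Lemma \ref{Local}(a),(c) localisation commutes with forming $\Ext^\ell_{\U(A)}(A,\U(A))$ and with forming $\omega_A^*$. Over the regular local ring $A_p$, a regular system of parameters $x_1,\dots,x_\ell$ trivialises $\Omega_{A_p}$, turning $P_{\bullet,p}$ into the ordinary Koszul-type complex of free $\U(A_p)$-modules and $C^\bullet_p$ into its $\U(A_p)$-dual; evaluating the cokernel is then the explicit computation already carried out in the proof of Lemma \ref{LMDF}. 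It shows both that $\Ext^\ell_{\U(A_p)}(A_p,\U(A_p))$ is free of rank one over $A_p$ (so $\psi_p$ is bijective) and that its right Poisson structure is the twist of the trivial one by the local modular derivation of $A$ at $p$, which by Definition \ref{LMD} is precisely the local Poisson structure of $\omega_{A_p}^*$ (so $\psi_p$ is Poisson). Hence $\psi$ is an isomorphism of right Poisson $A$-modules, and therefore $\RHom_{\U(A)}(A,\U(A))\simeq\omega_A^*[-\ell]\simeq\RHom_{A^e}(A,A^e)$ in $\text{D}^{\text{b}}(A^{op}\text{-PMod})$. (Alternatively, one could invoke the general Lie--Rinehart duality of Huebschmann \cite{Hue99} and Chemla \cite{Chem04} via the identification $\U(A)\cong V(A,\Omega_A)$, but the localisation route keeps the argument self-contained.)
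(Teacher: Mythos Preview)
Your strategy coincides with the paper's: compute $\RHom_{\U(A)}(A,\U(A))$ via the Rinehart resolution, manufacture a global comparison map $\omega_A^*\to\Ext^\ell_{\U(A)}(A,\U(A))$ along the inclusion $A\hookrightarrow\U(A)$, and verify by localisation (Lemma \ref{Local}) that it is a Poisson isomorphism. The paper even uses the same spectral-sequence/local argument you sketch for the vanishing in degrees $\neq\ell$.

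The one step that does not go through as written is your appeal to Lemma \ref{LMDF} for the local computation. That lemma analyses the \emph{adjoint} Poisson action on $\omega_{A_p}^*=\Ext^\ell_{A_p^e}(A_p,A_p^e)$ via the Koszul resolution of $A_p$ over $A_p^e$; it never touches $\U(A_p)$ and in particular says nothing about the cokernel of the dual Rinehart complex $C^\bullet_p$. A separate (though entirely analogous) calculation is needed: one must compute the top differential in $C^\bullet_p$ explicitly and obtain $\partial(e_i)/e=(-1)^{i+1}\bigl(H_{x_i}-\delta(x_i)\bigr)$, whence $\Ext^\ell_{\U(A_p)}(A_p,\U(A_p))\cong\U(A_p)/(H_{x_i}-\delta(x_i))_{1\le i\le\ell}\cong A_p^{\delta}$ as right $\U(A_p)$-modules. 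This computation is the substantive content of the paper's proof, and it is exactly what your outline is missing; once you supply it, your argument and the paper's are the same.
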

\begin{proof}
Throughout our proof, let $p\in \Spec(A)$, and write $B=A_p$ and $R=\gr_\mathscr F \U(B)\cong B[dx_1,\dots,dx_\ell]$ by \Cref{PBWP}. First of all, we show that the left hand side concentrates in degree $\ell$, which reduces our proof to the highest degree $\ell$. We use the standard spectral sequence
$$ \Ext^{*}_{R}(B,R) \Rightarrow \Ext^{*}_{\U(B)}(B, \U(B)).$$
It is easy to see that $\Ext^i_R(B, R)=0$ for $i\neq \ell$. Apply \Cref{Local} to get $\Ext_{\U(B)}^i(B,\U(B))=\Ext_{\U(A)}^i(A,\U(A))_p=0$ for all $p\in \Spec(A)$ if $i\neq \ell$, which implies that $\Ext_{\U(A)}^i(A,\U(A))=0$ for all $i\neq \ell$.

Next we will compute $\Ext_{\U(B)}^\ell(B,\U(B))$. Let $x_1,\dots,x_\ell$ be a regular system of parameters for $B$. We denote by $a_{ijk}\in B$ the coefficients determined by $d\{x_i,x_j\}=\sum_{1\le k\le \ell}a_{ijk}dx_k$. Then the local modular derivation $\delta\in \Der_P(B)$ is given by $\delta(x_i)=\sum_{1\le j\le \ell} a_{ijj}$ by Lemma \ref{LMDF}. In the following, write
\[e=dx_1\wedge dx_2\wedge \cdots \wedge dx_\ell,\quad e_i=dx_1\wedge \cdots \wedge \widehat{dx_i}\wedge \cdots \wedge dx_\ell\quad \text{for}\, 1\le i\le \ell\]
as free bases for $\Omega_B^\ell$ and $\Omega_B^{\ell-1}$. We use the complex $\U(B)\otimes_B \Omega_B^\bullet\to B$ with differentials described in \eqref{PHom} ($M=\U(B)$) as a projective resolution of $B$ in the category $\U(B)\text{-Mod}$. Therefore, we obtain the following commutative diagram of exact rows
\[
\xymatrix{
\Hom_B(\Omega_B^{\ell-1},\U(B))\ar[r]^-{\partial}\ar@{=}[d] &  \Hom_B(\Omega_B^\ell,\U(B))\ar[r]\ar@{=}[d]  &   \Ext_{\U(B)}^\ell(B,\U(B))\ar[r]\ar[r]\ar@{=}[d]  &  0\\
\bigoplus_{i=1}^\ell e_i\, \U(B)\ar[r]^-{\partial} & e\, \U(B) \ar[r]  &   \Ext_{\U(B)}^\ell(B,\U(B))\ar[r] &  0,
}
\]
where the differential $\partial$ can be explicitly given as follows
\begin{align*}
\partial(e_i)/e&\,=\delta_{e_i}(\partial(e))\\
&\,=\delta_{e_i}(\sum_{1\le j\le \ell}(-1)^{j+1}e_jH_{x_j}+\sum_{1\le j<k\le \ell} (-1)^{j+k} d\{x_j,x_k\}\wedge dx_1\wedge \cdots \wedge \widehat{dx_j}\wedge \cdots \wedge \widehat{dx_k}\wedge \cdots \wedge dx_\ell)\\
&\,=(-1)^{i+1}H_{x_i}+\delta_{e_i}(\sum_{\substack{1\le j<k\le \ell\\ 1\le m\le \ell}} (-1)^{j+k} a_{jkm}dx_m\wedge dx_1\wedge \cdots \wedge \widehat{dx_j}\wedge \cdots \wedge \widehat{dx_k}\wedge \cdots \wedge dx_\ell)\\
&\,=(-1)^{i+1}H_{x_i}+\delta_{e_i}(\sum_{1\le j<k\le \ell} (-1)^{j+k} a_{jkj}dx_j\wedge dx_1\wedge \cdots \wedge \widehat{dx_j}\wedge \cdots \wedge \widehat{dx_k}\wedge \cdots \wedge dx_\ell\\
&\quad\quad+ \sum_{1\le j<k\le \ell} (-1)^{j+k} a_{jkk}dx_k\wedge dx_1\wedge \cdots \wedge \widehat{dx_j}\wedge \cdots \wedge \widehat{dx_k}\wedge \cdots \wedge dx_\ell)\\
&\,=(-1)^{i+1}H_{x_i}+\delta_{e_i}(\sum_{1\le j<k\le \ell} (-1)^{k+1} a_{jkj}e_k+ \sum_{1\le j<k\le \ell} (-1)^{j} a_{jkk}e_j)\\
&\,=(-1)^{i+1}H_{x_i}+\sum_{1\le j<i\le \ell} (-1)^{i+1} a_{jij}+ \sum_{1\le i<k\le \ell} (-1)^{i} a_{ikk}\\
&\,=(-1)^{i+1}(H_{x_i}-\sum_{1\le j\le\ell} a_{ijj})\\
&\,=(-1)^{i+1}(H_{x_i}-\delta(x_i)).
\end{align*}
In the above calculation, we use the fact that $a_{ijk}=-a_{jik}$. Note that $\U(B)$ is generated by $B$ and $H_{x_1},\dots, H_{x_\ell}$ by Proposition \ref{PBWP} and the local modular derivation $\delta$ is uniquely determined by $\delta(x_1),\dots,\delta(x_\ell)$ by Lemma \ref{LMDF}. As a conclusion, we have
\[
\Ext_{B^e}^\ell(B,B^e) \cong B^\delta \cong \U(B)/(H_{x_i}-\delta(x_i))_{1\le i\le \ell}\cong \Ext_{\U(B)}^\ell(B,\U(B)).
\]
as right Poisson $B$-modules.

Finally, we define the global $A$-module map $\varphi: \Ext_{A^e}^i(A,A^e)\to  \Ext_{\U(A)}^i(A,\U(A))$ as $\varphi=0$ when $i\neq \ell$ and when $i=\ell$, $\varphi$ is the composition of the following maps.
\[
\xymatrix{
\Ext_{A^e}^\ell(A,A^e)=\Hom_A(\omega_A,A)\ar@{^{(}->}[r] & \Hom_A(\omega_A,\U(A))\ar@{->>}[r] &  \Ext_{\U(A)}^\ell(A,\U(A)).
}
\]
By the arguments above, any localization of $\varphi$ is a Poisson module isomorphism over the localized Poisson algebra. Hence the statement follows from Lemma \ref{Local}, and the derived version is obvious.
\end{proof}

In the theory of Lie-Rinehart algebras, there is a correspondence (analogous to BernsteinÕs correspondence for $\mathcal D$-modules) between left and right Lie-Rinehart modules using the Berezinian module of the dual sheaf of the Lie algebras \cite{Chem94}. We reformulate it in the context of Poisson algebras by using any Poisson $A$-modules which are line bundles over $A$.

\begin{lem}\label{EquivMod}
Let $S$ be a right Poisson $A$-module. If $S$ is a locally free $A$-module of rank one, then the following two functors $F: A\text{-PMod}\to A^{op}\text{-PMod}$ and $G:A^{op}\text{-PMod}\to A\text{-PMod}$ defined by $F(-)=S\otimes_A(-)$ and $G(-)=\Hom_A(S,-)$ with Poisson $A$-modules structures given by Proposition \ref{P:THPoisson} is an equivalence of categories. As a consequence, there is an equivalence of bounded derived categories
\begin{align*}
\xymatrix{
\text{D}^{\text{b}}(A\text{-Mod})\ar@<+3pt>[rr]^-{F} &&\text{D}^{\text{b}}(A^{op}\text{-Mod})\ar@<+3pt>[ll]^-{G}.
}
\end{align*}
\end{lem}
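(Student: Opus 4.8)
Let $S$ be a right Poisson $A$-module that is a locally free $A$-module of rank one. Then $F = S\otimes_A(-)\colon A\text{-PMod}\to A^{op}\text{-PMod}$ and $G = \Hom_A(S,-)\colon A^{op}\text{-PMod}\to A\text{-PMod}$ form an equivalence, and they induce an equivalence of the corresponding bounded derived categories.

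The plan is to reduce the assertion to a single fact: the tensor--hom adjunction unit and counit for the pair $(F,G)$ are isomorphisms of Poisson modules, not merely of $A$-modules. Everything else is either classical invertible-module theory or formal homological algebra.

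First I would record that $S$, being locally free of rank one over the noetherian ring $A$, is an invertible $A$-module: it is finitely generated projective and $S\otimes_A S^{*}\cong A$ for $S^{*}=\Hom_A(S,A)$. Consequently, for $N\in A\text{-PMod}$ and $M\in A^{op}\text{-PMod}$ the canonical morphisms
\[
\eta_N\colon N\longrightarrow \Hom_A\!\big(S,\;S\otimes_A N\big)=GF(N),\qquad
\varepsilon_M\colon FG(M)=S\otimes_A\Hom_A(S,M)\longrightarrow M
\]
are isomorphisms of underlying $A$-modules, using finite generation and projectivity of $S$ to move $\Hom_A(S,-)$ across $S\otimes_A(-)$ together with $\Hom_A(S,S)\cong A$. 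By Proposition \ref{P:THPoisson} the domains and codomains here are Poisson modules of the expected variance, so the only thing left is to see that $\eta_N$ and $\varepsilon_M$ are Poisson module morphisms; by Lemma \ref{Local}(e) it suffices to check this after localizing at each $p\in\Spec(A)$.

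Next I would analyze the local situation. Fix $p$; then $S_p$ is a right Poisson $A_p$-module that is free of rank one as an $A_p$-module, so Lemma \ref{L:APDer} gives $S_p\cong A_p^{\delta}$ for some $\delta\in\Der_P(A_p)$. Since $F_p=S_p\otimes_{A_p}(-)\cong A_p^{\delta}\otimes_{A_p}(-)$ and $G_p=\Hom_{A_p}(S_p,-)\cong\Hom_{A_p}(A_p^{\delta},-)$, Corollary \ref{P:PMTwist} identifies $F_p$ and $G_p$ --- after the tautological passage between left and right Poisson $A_p$-modules, which only changes the sign of the bracket --- with the twisting functors $(-)^{\delta}$ and $(-)^{-\delta}$ of Section \ref{S:Twisted}. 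For twisting functors the adjunction unit and counit are manifestly Poisson module isomorphisms, since twisting by $\delta$ followed by twisting by $-\delta$ restores the original Poisson structure (twisting being additive in the derivation). Hence $(\eta_N)_p$ and $(\varepsilon_M)_p$ are Poisson $A_p$-module morphisms for every $p$, and Lemma \ref{Local}(d),(e) upgrades this to: $\eta_N$ and $\varepsilon_M$ are isomorphisms in $A\text{-PMod}$ and $A^{op}\text{-PMod}$ respectively. Thus $F$ and $G$ are mutually quasi-inverse equivalences $A\text{-PMod}\simeq A^{op}\text{-PMod}$.

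Finally, for the derived statement I would observe that $F=S\otimes_A(-)$ is exact (as $S$ is flat over $A$) and $G=\Hom_A(S,-)$ is exact (as $S$ is finitely generated projective over $A$), where exactness in $A\text{-PMod}\cong\U(A)\text{-Mod}$ and $A^{op}\text{-PMod}\cong\U(A)^{op}\text{-Mod}$ is tested on underlying $A$-modules. An exact functor passes termwise to a triangulated functor on bounded derived categories and carries the natural isomorphisms $GF\cong\Id$ and $FG\cong\Id$ along; hence $F$ and $G$ induce an equivalence $\text{D}^{\text{b}}(A\text{-PMod})\simeq\text{D}^{\text{b}}(A^{op}\text{-PMod})$, and a fortiori of $\text{D}^{\text{b}}(A\text{-Mod})$ and $\text{D}^{\text{b}}(A^{op}\text{-Mod})$ --- this last being just the classical self-equivalence of the derived category induced by an invertible module. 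The one genuinely non-formal step is the bracket-compatibility of $\eta$ and $\varepsilon$, which is exactly what the reduction to the rank-one twisted module $A_p^{\delta}$ and the twisting calculus of Corollary \ref{P:PMTwist} take care of.
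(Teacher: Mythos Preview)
Your proof is correct and follows essentially the same approach as the paper's: reduce to the local situation via Lemma \ref{Local}, where $S_p\cong A_p^{\delta}$ and the functors become twisting by $\pm\delta$. The paper's proof merely asserts that ``it is straightforward to check that $GF=\Id$ and $FG=\Id$ by applying the method of localization,'' so your version is a faithful and more detailed unpacking of exactly that sketch, with the derived statement handled by the exactness of $F$ and $G$ just as the paper implicitly intends.
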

\begin{proof}
Since $S$, considered as a right $A$-module, is locally free of rank one, it is straightforward to check that $GF=\Id$ on $A\text{-PMod}$ and $FG=\Id$ on $A^{op}\text{-PMod}$ by applying the method of localization in \Cref{Local}. The derived version follows immediately.
\end{proof}
When $S=A$, we will simply identify $(-)=A\otimes_A(-)$ and $(-)=\Hom_A(A,-)$ between left and right Poisson modules. For instance, the original right Poisson module $\omega_A^*$ can be considered as a left Poisson module, where $\Hom_A(A, \omega_A^*)=\omega_A^*$ as vector spaces and $am=ma$, $\{a,m\}=-\{m,a\}$ for any $a\in A$ and $m\in \omega_A^*$. In the remaining of the paper, we will freely switch between left and right Poisson modules when we apply Proposition \ref{P:THPoisson}. In particular, it is an easy exercise to show that the corresponding left module for a twisted right Poisson module $M^\delta$ is given by $\!^{-\delta}M$ and vice visa.

\begin{thm}\label{TwistedP}
Let $A$ be an affine smooth Poisson algebra of dimension $\ell$. Then for any right Poisson $A$-module $M$, we have
\[
\HP^{i}(M)=\HP_{\ell-i}(M\otimes_A \omega_A^*).
\]
In particular if $A$ is homologically unimodular, then we have the Poincar\'e duality between Poisson homology and cohomology
\[
\HP^{i}(M)=\HP_{\ell-i}(M).
\]
\end{thm}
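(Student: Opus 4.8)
The plan is to realize this as the Poisson analogue of Van den Bergh's Poincar\'e duality for Hochschild (co)homology \cite{VDB2}, with the roles of an algebra and its enveloping algebra, $R\rightsquigarrow A$ and $R^e\rightsquigarrow\U(A)$, played by the trivial Poisson module $A$ and by $\U(A)$. By Proposition~\ref{Poissonext} the claim is equivalent to $\Ext^i_{\U(A)^{op}}(A,M)\cong\Tor^{\U(A)}_{\ell-i}(M\otimes_A\omega_A^*,A)$. Two inputs drive the argument. First, $A$ is homologically smooth over $\U(A)$: the Rinehart resolution $\U(A)\otimes_A\Omega^\bullet_A\to A$ has length $\ell$ and finitely generated projective terms, since $\Omega^n_A$ is a direct summand of a free $A$-module and $\U(A)$ is $A$-projective by Corollary~\ref{PropAPE} (applying the same to $A^{op}$ gives a finite, finitely generated projective resolution of $A$ as a right $\U(A)$-module). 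Second, combining Proposition~\ref{ExtEq} with the commutative Serre computation \eqref{SerreB} yields $\RHom_{\U(A)^{op}}(A,\U(A))\cong\omega_A^*[-\ell]$, where $\omega_A^*$ is the invertible Serre bimodule carrying its adjoint Poisson module structure as in Lemma~\ref{PAExt}.

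Granting these, the mechanism is standard. The identity ``$\Hom$ out of finitely generated projectives commutes with tensor'' together with the computation above gives, up to the usual left/right bookkeeping,
\[
\RHom_{\U(A)^{op}}(A,M)\;\cong\;M\otimes^L_{\U(A)}\RHom_{\U(A)^{op}}(A,\U(A))\;\cong\;M\otimes^L_{\U(A)}\omega_A^*[-\ell],
\]
whence $\HP^i(M)\cong\Tor^{\U(A)}_{\ell-i}(M,\omega_A^*)$. It then remains to prove the projection formula $\Tor^{\U(A)}_{*}(M,\omega_A^*)\cong\Tor^{\U(A)}_{*}(M\otimes_A\omega_A^*,A)=\HP_*(M\otimes_A\omega_A^*)$; this holds precisely because $\omega_A^*$ is a line bundle over $A$, i.e.\ an invertible object of $\Pic_P(A)$ via Lemma~\ref{EquivMod}, exactly as the invertibility of the bimodule $U$ is used in \cite{VDB2}. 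I would verify it by localization: both sides commute with localizing at each $p\in\Spec(A)$ by Lemma~\ref{Local}, over the regular local ring $A_p$ the line bundle $\omega_{A_p}^*$ trivializes as $\,{}^{-\delta_p}A_p$ for the local modular derivation $\delta_p$ (Lemma~\ref{L:APDer}, Lemma~\ref{LMDF}), and Corollary~\ref{P:PMTwist} together with the correspondence ``twisting the $\U(A_p)$-action by $\varphi(\delta_p)$ equals twisting the Poisson module by $\delta_p$'' identifies both localizations with $\HP_*((M_p)^{\delta_p})$; one constructs the evident global comparison map and checks it is a local isomorphism, then concludes by Lemma~\ref{Local}. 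Finally, if $A$ is homologically unimodular then $\omega_A^*\cong A$ as right Poisson $A$-modules by Definition~\ref{Hunimod} and Lemma~\ref{HomU}, so $M\otimes_A\omega_A^*\cong M$ and the formula collapses to the untwisted duality $\HP^i(M)=\HP_{\ell-i}(M)$.

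The genuine content of the argument --- identifying $\RHom_{\U(A)}(A,\U(A))$ with the Serre bimodule concentrated in degree $\ell$ --- is already supplied by Proposition~\ref{ExtEq}, so the remaining work is essentially organizational. The main care is in tracking the left/right Poisson module conventions through the derived-category manipulations, since the tensor and $\Hom$ functors of Proposition~\ref{P:THPoisson} interchange handedness; and the step I expect to require the most attention is the projection formula, for which the clean route is the localization argument above, where $\omega_A^*$ becomes free up to a twist, rather than a direct chain-level identification over $A$.
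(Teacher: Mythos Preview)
Your proof is correct and follows the same route as the paper's: convert to $\Ext$/$\Tor$ via Proposition~\ref{Poissonext}, use the finite projective Rinehart resolution to swap $\RHom$ with a derived tensor, invoke Proposition~\ref{ExtEq} to identify $\RHom_{\U(A)}(A,\U(A))$ with $\omega_A^*[-\ell]$, and then apply the projection formula $M\otimes_{\U(A)}^L\omega_A^*\cong(M\otimes_A\omega_A^*)\otimes_{\U(A)}^LA$. The only difference is in that last step: the paper dispatches it in one line by noting that $(-)\otimes_A\omega_A^*$ is an equivalence on $\U(A)^{op}$-modules (Lemma~\ref{EquivMod}), hence preserves projective resolutions, whereas your localization argument is a valid but longer way to reach the same conclusion.
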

\begin{proof}
In view of Proposition \ref{Poissonext}, it is convenient to make use of the derived category.
\begin{align*}
\HP^i(M)&\, =\Homology^i(\RHom_{\U(A)^{op}}(A,M))=\Homology^i(\RHom_{\U(A)}(A,M))\\
&\, =\Homology^i(\RHom_{\U(A)}(A,\U(A))\otimes_{\U(A)}^L M)=\Homology^i(\RHom_{A^e}(A,A^e)\otimes_{\U(A)}^L M)\\
&\,=\Homology^i(\omega_A^*[-\ell]\otimes_{\U(A)}^L M)=\Homology^{i-\ell}(\omega_A^*\otimes_{\U(A)}^L M)=\Homology^{i-\ell}(M\otimes_{\U(A)}^L \omega_A^*)\\
&\, =\Homology^{i-\ell}((M\otimes_A^L \omega_A^*)\otimes_{\U(A)}^L A)=\HP_{\ell-i}(M\otimes_A\omega_A^*).
\end{align*}
We use the fact that $M\otimes_{\U(A)}\omega_A^*\cong (M\otimes_A \omega_A^*)\otimes_{\U(A)}A$ and $(-)\otimes_A \omega_A^*: \U(A)^{op}\text{-Mod}\to \U(A)^{op}\text{-Mod}$ is an equivalence of categories by a left module version of Lemma \ref{EquivMod}.

In particular if $A$ is homologically unimodular, then $\omega_A\cong A$ as left Poisson modules. Hence $\omega_A^*=\Hom_A(\omega_A,A)\cong A$ as left Poisson modules, which implies the Poincar\'e duality.
\end{proof}

The twisted Poincar\'e duality between Poisson homology and cohomology was studied  by Launois-Richard \cite{LR07} for polynomial Poisson algebras with quadratic Poisson structures. Following their ideas, Zhu \cite{Zhu} obtained a twisted Poincar\'e duality for affine Poisson algebras, and later it was proved for any polynomial Poisson algebras with values in an arbitrary Poisson module by Luo-Wang-Wu \cite{LWW}. The following result provides a generalization of the twisted Poincar\'e duality for all CY Poisson algebras.

\begin{cor}\label{C:TwistD}
Let $A$ be a CY Poisson algebra of dimension $\ell$, and $M$ a right Poisson $A$-module. Then we have the twisted Poincar\'e duality between Poisson homology and cohomology
\[
\HP^{i}(M)=\HP_{\ell-i}(M^\delta),
\]
where $\delta$ is the modular derivation of $A$.
\end{cor}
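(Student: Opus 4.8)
The plan is to deduce Corollary \ref{C:TwistD} directly from Theorem \ref{TwistedP} by identifying the tensor factor $\omega_A^*$ in terms of the modular derivation. Since $A$ is CY, Proposition \ref{ComCY} tells us $A$ is smooth with trivial canonical bundle, so $\omega_A$ is free of rank one as an $A$-module. Then Lemma \ref{L:APDer} applies: as a left Poisson $A$-module we have $\omega_A\cong\,{}^\delta A$ for some $\delta\in\Der_P(A)$, whose class in $\mathscr{HP}^1(A)$ is the modular class of $A$ (this is exactly the modular derivation of \cite[\S 2.2]{LWW}, as recalled after Lemma \ref{LMDF}).

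The key computation is then to dualize. Since $\omega_A\cong\,{}^\delta A$ as left Poisson modules, applying $\Hom_A(-,A)$ and using Proposition \ref{P:THPoisson}(b) gives $\omega_A^*=\Hom_A(\omega_A,A)\cong \Hom_A({}^\delta A, A)$. By Corollary \ref{P:PMTwist}(b) (with the roles of left/right modules handled as in the discussion following Lemma \ref{EquivMod}, so that $\Hom_A({}^\delta A,A)\cong A^{-(-\delta)}=A^{\delta}$ — more directly, one checks on the Hom level that $\{a,\phi\}$ picks up the twist by $\delta$), we get $\omega_A^*\cong A^\delta$ as right Poisson $A$-modules. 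Feeding this into Theorem \ref{TwistedP} yields
\[
\HP^i(M)=\HP_{\ell-i}(M\otimes_A\omega_A^*)=\HP_{\ell-i}(M\otimes_A A^\delta)=\HP_{\ell-i}(M^\delta),
\]
where the last identification $M\otimes_A A^\delta\cong M^\delta$ is again an instance of Corollary \ref{P:PMTwist}(a) (tensoring the untwisted $M$ with $A^\delta$ shifts the Poisson bracket by $\delta$), or can be verified directly from the formulas in Proposition \ref{P:THPoisson}(a).

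The only real point requiring care is the bookkeeping of signs and of the left/right conventions when passing between $\omega_A$, $\omega_A^*$, and their twists — i.e.\ making sure the derivation that appears is $\delta$ and not $-\delta$, and matching it with the normalization of the modular derivation in \cite{LWW}. This is routine given Corollary \ref{P:PMTwist} and the remark after Lemma \ref{EquivMod} that the right-module counterpart of ${}^\delta M$ is $M^{-\delta}$; since $\omega_A\cong{}^\delta A$ on the left corresponds to $\omega_A^*\cong A^{\delta}$ on the right after dualizing, everything is consistent. There is no substantial obstacle: the corollary is essentially the specialization of Theorem \ref{TwistedP} to the trivial-canonical-bundle case, combined with the already-established interpretation of $\omega_A$ via the modular derivation.
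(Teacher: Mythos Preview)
Your proposal is correct and follows essentially the same route as the paper's proof: both arguments use Proposition \ref{ComCY} and Lemma \ref{L:APDer} to write $\omega_A\cong{}^\delta A$, dualize to identify $\omega_A^*$ with a twist of $A$, and then invoke Theorem \ref{TwistedP} together with Corollary \ref{P:PMTwist}. The only cosmetic difference is that the paper carries out the left/right conversion explicitly---writing $\omega_A^*\cong{}^{-\delta}A$ as a \emph{left} Poisson module before forming $M\otimes_A{}^{-\delta}A\cong M^\delta$---whereas you tensor with the right-module form $A^\delta$ and defer the sign bookkeeping to a remark; since Proposition \ref{P:THPoisson}(a) requires the second tensor factor to be a left module, the paper's phrasing is slightly cleaner, but the content is identical.
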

\begin{proof}
Now $A$ is CY. By Proposition \ref{ComCY} and Lemma \ref{L:APDer}, we know $\omega_A\cong \!^\delta A$ as left Poisson modules, where $\delta$ is the modular derivation of $A$. By a right version of Proposition \ref{P:THPoisson} (b), one sees that $\omega_A^*=\Hom_A(\!^\delta A,A)\cong A^\delta$ as right Poisson modules. In view of Lemma \ref{EquivMod} ($S=A$), we have $\omega_A^*\cong \!^{-\delta}A$ as left Poisson modules. Now it is an easy exercise to check that $M\otimes_A \!^{-\delta}A\cong M^\delta$ as right Poisson modules following Proposition \ref{P:THPoisson} (a). Hence $\HP^i(M)=\HP_{\ell-i}(M\otimes_A \!^{-\delta}A)=\HP_{\ell-i}(M^\delta)$ by Theorem \ref{TwistedP}.
\end{proof}

\section{Calabi-Yau condition}\label{S:CY}
In this section, we use the rigid dualizing complex of the enveloping algebra $\U(A)$ to explore its Calabi-Yau property. We show that there is a strong connection between the unimodularity of $A$ and the Calabi-Yau property of $\U(A)$.

\begin{lem}\label{GradedCY}
For any affine smooth Poisson algebra $A$ of dimension $\ell$, the associated graded algebra $\gr_{\mathscr F}\U(A)$ has a rigid dualizing complex
\[
\gr_{\mathscr F}\U(A)\otimes_A (\omega_A\otimes_A\omega_A)[2\ell].
\]
In particular if $A$ is CY, then $\gr_{\mathscr F}\U(A)$ is CY.
\end{lem}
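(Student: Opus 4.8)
The plan is to identify $R:=\gr_{\mathscr F}\U(A)$ with a smooth affine commutative algebra of dimension $2\ell$ and then invoke the commutative rigid‑dualizing‑complex statement recalled right after \eqref{SerreB}. By Proposition \ref{PBWP}, $R$ is the symmetric $A$‑algebra $S_A(\Omega_A)$, which is commutative. Since $\Omega_A$ is a finitely generated projective $A$‑module of rank $\ell$, the algebra $R$ is affine over $k$ and smooth over $A$ (locally the polynomial ring $A_p[dx_1,\dots,dx_\ell]$ by Proposition \ref{PBWP}), hence smooth over $k$ of Krull dimension $2\ell$. Therefore $R$ satisfies the Van den Bergh condition and its rigid dualizing complex is $\wedge^{2\ell}\Omega_R\,[2\ell]$.

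The remaining step is to compute the canonical module $\omega_R:=\wedge^{2\ell}\Omega_R$. Here I would use the cotangent exact sequence of $k\to A\to R$: because $A\to R$ is smooth it is short exact and locally split,
\[
0\longrightarrow R\otimes_A\Omega_A\longrightarrow \Omega_R\longrightarrow \Omega_{R/A}\longrightarrow 0,
\]
while $\Omega_{R/A}=\Omega_{S_A(\Omega_A)/A}\cong R\otimes_A\Omega_A$ since the module of K\"ahler differentials of a symmetric algebra $S_A(P)$ over $A$ is the free $S_A(P)$‑module $S_A(P)\otimes_A P$. Taking top exterior powers of this short exact sequence of finitely generated projective $R$‑modules of ranks $\ell$, $2\ell$, $\ell$ gives a canonical isomorphism
\begin{align*}
\wedge^{2\ell}\Omega_R &\;\cong\;\wedge^{\ell}(R\otimes_A\Omega_A)\otimes_R\wedge^{\ell}(R\otimes_A\Omega_A)\\
&\;\cong\;(R\otimes_A\omega_A)\otimes_R(R\otimes_A\omega_A)\;\cong\;R\otimes_A(\omega_A\otimes_A\omega_A).
\end{align*}
Substituting this into the previous paragraph yields the rigid dualizing complex $\gr_{\mathscr F}\U(A)\otimes_A(\omega_A\otimes_A\omega_A)[2\ell]$. (Alternatively, one could verify the isomorphism after localizing at each $p\in\Spec(A)$, where $\Omega_{R_p}$ is free of rank $2\ell$, and patch as in Lemma \ref{Local}.)

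For the last assertion: if $A$ is CY then $\omega_A\cong A$ by Proposition \ref{ComCY}, so $\omega_A\otimes_A\omega_A\cong A$ and the rigid dualizing complex of $R$ becomes $R[2\ell]$; since $R$ is affine commutative and smooth, the equivalence of (a) and (d) in Proposition \ref{ComCY} shows $R=\gr_{\mathscr F}\U(A)$ is CY. I do not anticipate a genuine obstacle, as the argument simply combines the Rinehart PBW isomorphism, the standard description of $\Omega$ for a symmetric algebra, and the commutative duality theory already recorded in Section \ref{S:Pre}; the one point worth stating carefully is the splitting of the cotangent sequence, which is what lets the top exterior power factor as a tensor product of line bundles.
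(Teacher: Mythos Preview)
Your proposal is correct and follows essentially the same strategy as the paper: identify $\gr_{\mathscr F}\U(A)\cong S_A(\Omega_A)$ as a smooth affine commutative algebra of dimension $2\ell$, invoke the commutative rigid dualizing complex $\wedge^{2\ell}\Omega_R[2\ell]$, and then compute $\wedge^{2\ell}\Omega_R\cong R\otimes_A(\omega_A\otimes_A\omega_A)$. The only difference is in how $\Omega_R$ is handled: the paper writes down an explicit $B$-module map $\Omega_B\to B\otimes_A(\Omega_A\oplus\Omega_A)$ on generators and checks it is an isomorphism after localizing at each $p\in\Spec(A)$, whereas you use the relative cotangent sequence $0\to R\otimes_A\Omega_A\to\Omega_R\to\Omega_{R/A}\to 0$ together with $\Omega_{R/A}\cong R\otimes_A\Omega_A$ and the determinant-of-an-extension formula; your route is slightly more conceptual and avoids the explicit localization check, while the paper's explicit splitting is what your parenthetical alternative alludes to.
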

\begin{proof}
It suffices to show for $B:= S_A(\Omega_A)$ by Proposition \ref{PBWP}. We define the following map
\[\varphi: \Omega_B\to B\otimes_A\left(\Omega_A\oplus \Omega_A\right)\]
first on the generators of $\Omega_B$ such that $\varphi(da)=1\otimes (da+0)$ and $\varphi(d\alpha)=1\otimes (0+\alpha)$ for any $a\in A$ and $\alpha\in\Omega_A$. Then one checks that $\varphi$ can be extended to a well-defined $B$-module map on $\Omega_B$. Now consider $\varphi$ as an $A$-module map. The localization of $B$ at any $p\in \Spec(A)$ implies that $B_p=S_{A_p}(\Omega_{A_p})\cong A_p[dx_1,\dots,dx_\ell]$ by Proposition \ref{PBWP}. Hence $\Omega_{B_p}$ is a free $B_p$-module of rank $2\ell$ and the localization $\varphi_p$ yields $\varphi_p: B_p^{\oplus 2\ell}\to B_p^{\oplus 2\ell}$, which is easy to be checked as an isomorphism. Therefore $\varphi$ is an isomorphism of $A$-modules; and hence it is an isomorphism of $B$-modules. Since $B$ is affine smooth, by the comment above Proposition \ref{ComCY}, the rigid dualizing complex of $B$ is given by $2\ell$ shifting of
\begin{align*}
\wedge^{2\ell}\, \Omega_B&\, =\wedge^{2\ell} B\otimes_A(\Omega_A\oplus \Omega_A)=B\otimes _A \wedge^{2\ell}(\Omega_A\oplus \Omega_A)\\
&\,=B\otimes _A (\wedge^{\ell}\Omega_A \otimes _A \wedge^\ell \Omega_A)=B\otimes_ A (\omega_A\otimes_A\omega_A).
\end{align*}
Moreover, when $A$ is CY, we know $\omega_A=A$, then it follows from Proposition \ref{ComCY} since the rigid dualizing complex of $B$ is $B[2\ell]$.
\end{proof}

Generally, suppose $S$ is a Poisson $A$-module that is a locally free $A$-module of rank one. In the following, we denote by $S^*=\Hom_A(S,A)$ the dual module of $S$, which is also a locally free $A$-module of rank one; and its has again a Poisson module structure due to Proposition \ref{P:THPoisson}. We will not specify on which side is the Poisson module structure of $S$ or $S^*$, but will leave to the context making use of the comment below Lemma \ref{EquivMod}. The next result verifies \Cref{D:PicardG}.

\begin{lem}\label{RankonePMod}
Let $S$ or $S_i$ be Poisson modules over $A$, which are locally free $A$-modules of rank one. We have the following isomorphisms of Poisson $A$-modules.
\begin{enumerate}
\item $S\otimes A\cong S\cong A\otimes_AS$;
\item $S\otimes_A S^*\cong A\cong S^*\otimes_A S$;
\item $(S_1\otimes_A S_2)^*\cong S_2^*\otimes_A S_1^*$;
\item $\Hom_A(S_1,S_2)\cong S_1^*\otimes_A S_2$;
\item $\Hom_A(S_1,S_2)^*\cong \Hom(S_2,S_1)$.
\end{enumerate}
Moreover if $U$ is an invertible Poisson bimodules over $A$, then $(S\otimes_A U)^{-1}=U^{-1}\otimes_A S^*$.
\end{lem}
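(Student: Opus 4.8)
The plan is to verify each of the six assertions by the same mechanism used throughout Section \ref{S:Twisted}: every claimed map is a natural isomorphism of underlying $A$-modules (these are the familiar identities for rank-one projective modules over a commutative ring), so by \Cref{Local}(e) it suffices to check that the Poisson module structures match locally, or in fact one can simply write down the candidate map and check directly that it intertwines the brackets using the formulas of \Cref{P:THPoisson} and \Cref{PAExt}. So the strategy is: produce the obvious $A$-module map in each case, observe it is an isomorphism of $A$-modules (locally free rank one is preserved by $\otimes_A$ and $\Hom_A$, and the evaluation/coevaluation maps are isomorphisms), then match Poisson brackets.

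First I would dispatch (a): the canonical maps $s\otimes 1\mapsto s$ and $1\otimes s\mapsto s$ are $A$-linear isomorphisms, and comparing $\{s\otimes 1,a\}=\{s,a\}_S\otimes 1 - s\otimes\{a,1\}=\{s,a\}_S\otimes 1$ (since $\{a,1\}=0$ as $A$ is the trivial Poisson module) with $\{s,a\}_S$ shows compatibility; the left-hand identity is symmetric. For (d), the standard $A$-module isomorphism $S_1^*\otimes_A S_2\xrightarrow{\sim}\Hom_A(S_1,S_2)$, $\xi\otimes s_2\mapsto (s_1\mapsto \xi(s_1)s_2)$, must be checked against the two bracket formulas in \Cref{P:THPoisson}: the bracket on $S_1^*$ (which is $\Hom_A(S_1,A)$ with its induced structure), the bracket on the tensor product, and the bracket on $\Hom_A(S_1,S_2)$ — expanding both sides on an element $s_1$ and using the Leibniz identity for $\{a,\xi(s_1)s_2\}$ in $S_2$ reduces the two expressions to the same thing. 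Then (b) is the special case $S_1=S_2=S$ of (d) combined with (a), since $\Hom_A(S,S)\cong S^*\otimes_A S$ and the trace/evaluation map $S^*\otimes_A S\to A$ is a Poisson isomorphism onto the trivial module (again a local rank-one check, or a direct bracket computation); the other side $S\otimes_A S^*\cong A$ follows by symmetry (or by applying (b) to $S^*$ and using the canonical $S^{**}\cong S$). For (c), the map $S_2^*\otimes_A S_1^*\to (S_1\otimes_A S_2)^*$ sending $\xi\otimes\eta$ to $s_1\otimes s_2\mapsto \xi(s_2)\eta(s_1)$ — or more cleanly $(s_1\otimes s_2)\mapsto \eta(s_1)\xi(s_2)$ — is an $A$-isomorphism, and the bracket compatibility follows from the tensor-product bracket formula applied on both sides together with the definition of the bracket on a dual module. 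Finally (e) is (d) applied twice together with (b) and (c): $\Hom_A(S_1,S_2)^*\cong(S_1^*\otimes_A S_2)^*\cong S_2^*\otimes_A S_1^{**}\cong S_2^*\otimes_A S_1\cong\Hom_A(S_2,S_1)$.

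For the last sentence, $S\otimes_A U$ is an $A$-bimodule (left and right actions through the two copies of $A$ inside $A^e$, twisted by the Poisson bimodule structure of $U$), and it is invertible over $A$ because $U$ is and $S$ is a line bundle; one computes $(S\otimes_A U)\otimes_A(U^{-1}\otimes_A S^*)\cong S\otimes_A(U\otimes_A U^{-1})\otimes_A S^*\cong S\otimes_A A\otimes_A S^*\cong S\otimes_A S^*\cong A$ using associativity of $\otimes_A$, the invertibility of $U$, and part (b); the other composite is symmetric. Here one must be slightly careful that all these $\otimes_A$'s are taken with respect to the correct (left or right) module structures, which is exactly the bookkeeping the comment after \Cref{EquivMod} handles.

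The main obstacle is not any single step but the sheer amount of notational bookkeeping: each of the six isomorphisms involves keeping straight which side the Poisson structure lives on (the paper's convention, per the remark after \Cref{EquivMod}, lets one pass a twisted right module $M^\delta$ to the left module $\!^{-\delta}M$, and one must use this consistently), and verifying bracket compatibility requires expanding the three-term formulas of \Cref{P:THPoisson} on test elements and collapsing them with Leibniz. I expect the cleanest write-up to lean on \Cref{Local}(e) to reduce every verification to the local free rank-one case, where $S\cong A$, $S^*\cong A$, and all the maps become the obvious identifications on $A$ itself, so the only genuine content is the handful of sign/bracket checks in part (a) and part (d), with everything else formal.
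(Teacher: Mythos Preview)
Your proposal is correct and follows essentially the same localization strategy as the paper: define the natural $A$-module map globally, then check it is a Poisson isomorphism after localizing at each $p\in\Spec(A)$ via \Cref{Local}. The only refinement the paper adds is that, rather than expanding brackets by hand in the local case, it invokes \Cref{L:APDer} to write $S_p\cong A_p^{\delta}$ for some $\delta\in\Der_P(A_p)$ and then applies \Cref{P:PMTwist} to make the twist calculus do the bookkeeping automatically (e.g.\ $A_p^{\delta}\otimes_{A_p}\!{}^{\delta}\Hom_{A_p}(A_p,A_p)\cong A_p$); this replaces your direct bracket verification with a one-line reduction, but the content is the same.
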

\begin{proof}
For any $p\in \Spec(A)$, the localization $S_p\cong A_p$  is equipped with the Poisson module structure given by some local Poisson derivation $\delta\in \Der_P(A_p)$ by Lemma \ref{L:APDer}. Hence we can prove all the claims by localization. For instance in (b), there is natural map $\varphi: S\otimes_A S^*\to A$ given by evaluation. Let $S_p\cong A^\delta_p$ for some local Poisson derivation $\delta\in\Der_P(A_p)$. By Corollary \ref{P:PMTwist}, we have
\begin{align*}
\varphi_p: S_p \otimes_A \Hom_{A_p}(S_p,A_p)&\, =A_p^{\delta} \otimes_A \Hom_{A_p}(A_p^\delta,A_p)=A_p^{\delta} \otimes_A \!^\delta\, \Hom_{A_p}(A_p,A_p)\\
&\, =A_p\otimes_A \Hom_{A_p}(A_p,A_p)\to A_p
\end{align*}
is clearly an isomorphism of Poisson modules. Hence $\varphi$ is an isomorphism of Poisson modules by Lemma \ref{Local}. We can prove $S^*\otimes_A S\cong A$ analogously.
\end{proof}

Before we state our results, let us set up the convection for the structure of Poisson bimodules concerning the tensor product $S\otimes_A \U(A)$ (resp. $\U(A)\otimes_A S$) for any Poisson module $S$ that is a line bundle over $A$.  We require that the right Poisson structure of $S\otimes \U(A)$ (resp. $\U(A)\otimes S$) is derived from the right (resp. left) multiplication of $\U(A)$ and the left (resp. right) $\U(A)$-module structure is determined by the tensor product with possible switching side of Poisson module structures regarding $S$ and Proposition \ref{P:THPoisson} by applying the comment below Lemma \ref{EquivMod}.

Now according to \Cref{EquivMod}, one sees that $(S\otimes_A \U(A))[d]$ and $(\U(A)\otimes_A S)[d]$ for any $d\in \mathbb Z$ are all dualizing complexes in $\text{D}^{\text{b}}(\U(A)^e\text{-Mod})$. As suggested by Lemma \ref{GradedCY}, the line bundle $S=\mathcal L_A$ such that
\begin{align}\label{CYdualmod}
\mathcal L_A:=\omega_A\otimes_A\omega_A=\Hom_A(\omega_A^*,\omega_A)
\end{align}
plays a significant role in the rigid dualizing complex of $\U(A)$. Clearly, $\mathcal L_A$ is equipped with a left Poisson module structure with respect to Proposition \ref{P:THPoisson} (b) when we treat both $\omega_A^*$ and $\omega_A$ as right Poisson modules. By Lemma \ref{RankonePMod}, the dual module of $\mathcal L_A$ is given by
\begin{align}\label{CYdualdualmod}
\mathcal L_A^*=\omega_A^*\otimes_A\omega_A^*=\Hom_A(\omega_A,\omega_A^*).
\end{align}
In particular when $A$ is CY, then $\mathcal L_A=\Hom_A(A^{\delta},A^{-\delta})=\!^{2\delta }A$ as left Poisson modules, or $\mathcal L_A=A^{-2\delta}$ as right Poisson modules, where $\delta$ is the modular derivation of $A$.

\begin{prop}\label{VDBP}
For any affine smooth Poisson algebra $A$ of dimension $\ell$, we have
\[
\RHom_{\U(A)^{op}}(A,\RHom_{A^e}(A,\U(A)^e))=\U(A)\otimes_A\mathcal L_A^*[-2\ell]
\]
in $\text{D}^{\text{b}}(\U(A)^e\text{-Mod})$.
\end{prop}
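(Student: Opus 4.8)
The plan is to compute the iterated $\RHom$ by passing to the enveloping algebra $\U(A)$, reducing to the already-established identification of $\RHom_{A^e}(A,A^e)$ from \Cref{ExtEq}, and then bookkeeping the Poisson module twists coming from $\omega_A$. The starting point is Lemma \ref{TensorU}, which gives $\U(A)^e = \U(A^e)$, so that an $A^e$-bimodule structure on a $\U(A)$-bimodule becomes available; in particular I would use $\RHom_{A^e}(A,\U(A)^e)$ where the inner $A^e$ acts through the subalgebra $A^e \subseteq \U(A)^e$ via the algebra map $M \colon A^e \to \U(A)^e$ of Definition \ref{UnivP}. The key intermediate claim is the analogue of \Cref{ExtEq} for the pair $(A, \U(A)^e)$, namely
\[
\RHom_{A^e}(A, \U(A)^e) \;=\; \omega_A^* \otimes_A \U(A)^e\,[-\ell]
\]
in $\text{D}^{\text{b}}(\U(A)^e\text{-Mod})$, where the right $\U(A)^e$-structure comes from right multiplication and the left $\U(A)^e$-structure is assembled from the Poisson module structure on $\omega_A^*$ via Proposition \ref{P:THPoisson} together with the conventions fixed above. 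This should follow by the same localization argument as in \Cref{ExtEq}: since $\U(A)^e$ is flat (indeed projective) over $A^e$ by Corollary \ref{PropAPE}, the graded/Koszul computation giving $\Ext_{A^e}^\ell(A,A^e) = \omega_A^*$ tensors up correctly, and one checks on each localization $A_p$ using the Koszul resolution and Lemma \ref{LMDF} that the Poisson bracket induced by the adjoint action produces precisely the twist $\omega_A^*$.

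Granting this, the second step is to apply $\RHom_{\U(A)^{op}}(A, -)$. Using the projective resolution $\U(A)\otimes_A\Omega_A^\bullet \to A$ from \cite[Lemma 4.1]{Rinehart} and the fact that $\omega_A^* \otimes_A \U(A)^e$ is free (hence acyclic) as a right $\U(A)^{op}$-module, this reduces to computing $\RHom_{\U(A)^{op}}(A, \U(A)) \otimes$ (the appropriate twist). Here I invoke \Cref{ExtEq} again — $\RHom_{\U(A)}(A,\U(A)) = \RHom_{A^e}(A,A^e) = \omega_A^*[-\ell]$ — and combine the two shifts to get the total shift $[-2\ell]$. The residual content is a Poisson module calculation: the two copies of $\omega_A^*$ that appear — one from the inner $\RHom_{A^e}$ and one from the outer $\RHom_{\U(A)^{op}}$ — combine, after identifying left and right module conventions via the comment below \Cref{EquivMod} and using \Cref{RankonePMod}(c), into $\omega_A^* \otimes_A \omega_A^* = \mathcal L_A^*$ by \eqref{CYdualdualmod}. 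This is where one must be careful about which of $\omega_A$, $\omega_A^*$ appears and on which side; the clean way to track it is to work locally, where $\omega_{A_p} \cong \!^\delta A_p$ for the local modular derivation $\delta$, apply Corollary \ref{P:PMTwist}, and observe that the twists add to $2\delta$ on $\omega_A \otimes_A \omega_A$, equivalently $-2\delta$ on $\mathcal L_A^*$, matching \eqref{CYdualdualmod}, and then globalize by Lemma \ref{Local}.

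The main obstacle I anticipate is not the homological reduction — that is routine given \Cref{ExtEq} — but rather keeping the Poisson bimodule structures straight through the two successive $\RHom$'s: the intermediate object $\RHom_{A^e}(A,\U(A)^e)$ carries a left $\U(A)^e$-structure that is a hybrid of an adjoint-type Poisson action on the $\omega_A^*$ factor and the outer $\U(A)^e$-multiplication, and one must verify that applying $\RHom_{\U(A)^{op}}(A,-)$ interacts correctly with both. I would handle this by systematically reducing every such verification to a localization $A_p$ (using Lemma \ref{Local}(c),(e)), where all line bundles trivialize to twisted copies $\!^\delta A_p$ and the bracket computations become explicit Koszul-complex manipulations as in the proof of \Cref{ExtEq}; the global statement then follows because the natural comparison map is a Poisson module isomorphism after every localization. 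A secondary point to check is that $(S \otimes_A \U(A))[d]$-type complexes are genuinely dualizing in $\text{D}^{\text{b}}(\U(A)^e\text{-Mod})$, which is exactly the content of \Cref{EquivMod} applied on both sides, so that the rigidity statement in the subsequent theorem is well-posed.
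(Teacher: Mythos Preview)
Your proposal is correct and follows essentially the same route as the paper: use flatness of $\U(A)^e$ over $A^e$ to identify the inner $\RHom_{A^e}(A,\U(A)^e)$ with $\omega_A^*$ tensored up and shifted by $-\ell$, then apply the outer $\RHom_{\U(A)^{op}}(A,-)$ and invoke \Cref{ExtEq} a second time to pick up another $\omega_A^*[-\ell]$, finally collapsing $\omega_A^*\otimes_A\omega_A^*=\mathcal L_A^*$. The paper's argument is slightly more streamlined in that it chains the derived-category isomorphisms directly (writing the intermediate object as $(\U(A)\otimes_A\omega_A^*)\otimes_A\U(A)\,[-\ell]$) without re-running a localization argument; the Poisson-structure bookkeeping you worry about is absorbed into the earlier results (\Cref{ExtEq}, \Cref{P:THPoisson}) rather than re-verified here.
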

\begin{proof}
In view of \Cref{PAExt}, it is easy to check that the left hand side above is well-defined. Since $\U(A)^{e}=\U(A^e)$ is flat over $A^e$ by Corollary \ref{PropAPE}, we have
\begin{align*}
\RHom_{A^e}(A,\U(A)^e)&\, =\RHom_{A^e}(A,A^e\otimes_{A^e} \U(A)^e)\\
&\, =\RHom_{A^e}(A,A^e)\otimes_{A^e}^L \U(A)^e\\
&\, =\omega_A^*\,[-\ell]\otimes_{A^e}^L \U(A)^e\\
&\, =(\U(A)\otimes_A \omega_A^*)\otimes_A \U(A)\, [-\ell].
\end{align*}
Note that for the last equality above, the right $\U(A)$-module structure is determined by Proposition \ref{P:THPoisson} (b) when $\omega_A^*$ is considered as a left $\U(A)$-module. Next we get
\begin{align*}
\RHom_{\U(A)^{op}}(A,\RHom_{A^e}(A,\U(A)^e))&\, =\RHom_{\U(A)^{op}}(A,(\U(A)\otimes_A \omega_A^*)\otimes_A \U(A)\,[-\ell])\\
&\, =(\U(A)\otimes_A \omega_A^*)\otimes_A^L\RHom_{\U(A)^{op}}(A, \U(A))\,[-\ell]\\
&\, =(\U(A)\otimes_A \omega_A^*)\otimes_A (\omega_A^*\,[-\ell])\,[-\ell]\\
&\, =(\U(A)\otimes_A \omega_A^*)\otimes_A \omega_A^*\,[-2\ell]\\
&\, =\U(A)\otimes_A (\omega_A^*\otimes_A \omega_A^*)\,[-2\ell]\\
&\, =\U(A)\otimes_A \mathcal L_A^*\,[-2\ell].
%&\, =\U(A)\otimes_A \Hom_A(\omega_A,\omega^*_A)[-2\ell]\\
%&\, =\U(A)\otimes_A \chi[-2\ell].
\end{align*}
%In order to get the third and fourth equality, we use the fact that $(-)\otimes_A \omega_A^*: \U(A)^{op}\text{-Mod}\to \U(A)^{op}\text{-Mod}$ is an equivalence of categories by Lemma \ref{EquivMod}. Regarding the $\U(A)$-bimodule structure for the last equality, it is clear that the left $\U(A)$-module structure is given by the left multiplication of $\U(A)$ and the right $\U(A)$-module structure is determined by applying Proposition \ref{P:THPoisson} (b) twice.
%
%Finally we use Lemma \ref{RankonePMod} noticing that $ \U(A)\otimes_A\mathcal L_A^*\cong \mathcal L_A^*\otimes_A\U(A)$ as Poisson $A$-bimodules. Thus
%\begin{align*}
%\RHom_{\U(A)^e}(\U(A),\U(A)^e)&\,  =\U(A)\otimes_A (\omega_A^*\otimes_A \omega_A^*)[-2\ell]=\U(A)\otimes_A \Hom_A(\omega_A,\omega^*_A)[-2\ell]\\
%&\, =\U(A)\otimes_A \mathcal L_A^*[-2\ell]=\mathcal L_A^*\otimes_A \U(A) [-2\ell].
%\end{align*}
\end{proof}

The following result has been proved in a general setting for Lie-Rinehart Lie (super)algebras \cite{Chem94, Chem99}. We reformulate its Poisson version making it compatible with our notations.
\begin{thm}\label{RigidDCP}%\cite[Theorem 4.4.1]{Chem99}
For an affine smooth Poisson algebra $A$ of dimension $\ell$, the rigid dualizing complex of $\U(A)$ is
\[
\U(A)\otimes_A\mathcal L_A\, [2\ell].
%=\left(\U(A)\otimes_A \mathcal L_A^*\,[-2\ell]\right)^{-1}.
\]
In particular if $A$ is CY, the rigid dualizing complex of $\U(A)$ is $\U(A)^{-2\delta}\,[2\ell]$, where $\delta$ is the modular derivation of $A$.
\end{thm}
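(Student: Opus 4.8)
The plan is to derive the theorem by combining Proposition \ref{VDBP} with Proposition \ref{CYRigid}, the bridge between the two being a change-of-rings identity that reduces the Hochschild-type duality of $\U(A)$ over $\U(A)^e$ to the iterated duality already computed in Proposition \ref{VDBP}.

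First I would assemble the ring-theoretic input. By Corollary \ref{PropAPE} the algebra $\U(A)$ is a noetherian Auslander-regular domain, and by Proposition \ref{PBWP} (or Lemma \ref{GradedCY}) its global, hence injective, dimension on each side equals $2\ell=\dim S_A(\Omega_A)$; applying Corollary \ref{PropAPE} to the affine smooth Poisson algebra $A^e=A\otimes A^{op}$ of dimension $2\ell$ (using that $A\otimes A$ is smooth \cite{VDB3}) together with Lemma \ref{TensorU}, the algebra $\U(A)^e=\U(A^e)$ is noetherian of finite global dimension. Thus the hypotheses of Proposition \ref{CYRigid} are in force, and it remains to compute $\RHom_{\U(A)^e}(\U(A),\U(A)^e)$.

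The key step is the natural isomorphism
\[
\RHom_{\U(A)^e}(\U(A),N)\;\cong\;\RHom_{\U(A)^{op}}\bigl(A,\RHom_{A^e}(A,N)\bigr)
\]
in $\text{D}^{\text{b}}(\U(A)^e\text{-Mod})$ for a $\U(A)$-bimodule $N$. On the level of ordinary Hom-functors this is elementary: $\Hom_{A^e}(A,N)$ is the submodule of $N$ of elements commuting with $M_A$, carrying the right Poisson structure of Lemma \ref{PAExt}, and applying $\Hom_{\U(A)^{op}}(A,-)$ to it — with $A\cong\U(A)/\U(A)\Omega_A$ the augmentation module — cuts out those elements that in addition commute with $H_A$; since $M_A$ and $H_A$ generate $\U(A)$, this is precisely $\Hom_{\U(A)^e}(\U(A),N)=\{n\in N:\, xn=nx\ \forall x\in\U(A)\}$. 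To pass to derived functors one uses that $\U(A)^e=\U(A^e)$ is projective, hence flat, over $A^e$ (Corollary \ref{PropAPE}), so that injective $\U(A)^e$-modules restrict to injective $A^e$-modules (as in the proof of Lemma \ref{PAExt}) and $\Hom_{A^e}(A,-)$ carries them to objects acyclic for $\Hom_{\U(A)^{op}}(A,-)$; hence the composite of the two derived functors computes $\RHom_{\U(A)^e}(\U(A),-)$. Taking $N=\U(A)^e$ and invoking Proposition \ref{VDBP} gives
\[
\RHom_{\U(A)^e}(\U(A),\U(A)^e)\;=\;\U(A)\otimes_A\mathcal L_A^*\,[-2\ell],
\]
an invertible $\U(A)$-bimodule (Lemmas \ref{EquivMod} and \ref{RankonePMod}) concentrated in cohomological degree $2\ell$. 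Hence $\U(A)$ satisfies the Van den Bergh condition with $d=2\ell$ and $U=\U(A)\otimes_A\mathcal L_A^*$, so by Proposition \ref{CYRigid} it has rigid dualizing complex $U^{-1}[2\ell]$, where $U^{-1}=\U(A)\otimes_A\mathcal L_A$ by Lemma \ref{RankonePMod}. When $A$ is CY one has $\mathcal L_A\cong A^{-2\delta}$ as right Poisson modules by the discussion preceding the theorem, and by the identification of Poisson-derivation twists with automorphisms of $\U(A)$ (Lemma \ref{ModAut}) this becomes $\U(A)\otimes_A\mathcal L_A=\U(A)^{-2\delta}$, yielding the rigid dualizing complex $\U(A)^{-2\delta}[2\ell]$.

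The main obstacle is the change-of-rings isomorphism — concretely, verifying that $\Hom_{A^e}(A,-)$ sends injective $\U(A)^e$-modules to objects acyclic for $\Hom_{\U(A)^{op}}(A,-)$, so that the two derived functors genuinely compose (equivalently, building an explicit resolution of $\U(A)$ over $\U(A)^e$ out of the Rinehart resolution $\U(A)\otimes_A\wedge^\bullet\Omega_A$ of $A$ over $\U(A)$ and an $A^e$-projective resolution of $A$). This can be sidestepped in two ways: one may appeal directly to Chemla's formula for the rigid dualizing complex of an algebra of generalized differential operators \cite{Chem99} — noting that $\U(A)=V(A,\Omega_A)$ is such an algebra, since $A$ is affine smooth and $\Omega_A$ is finitely generated projective of rank $\ell$ — the remaining work then being to match Chemla's normalization with $\mathcal L_A=\omega_A\otimes_A\omega_A$; or one may lift the rigid dualizing complex of $\gr_{\mathscr F}\U(A)$ from Lemma \ref{GradedCY} through the Zariskian filtration \cite{HO}, using the obvious filtration on $\U(A)\otimes_A\mathcal L_A[2\ell]$ whose associated graded is $\gr_{\mathscr F}\U(A)\otimes_A\mathcal L_A[2\ell]$.
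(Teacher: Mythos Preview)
Your primary line of attack --- reducing $\RHom_{\U(A)^e}(\U(A),\U(A)^e)$ to the iterated $\RHom$ of Proposition \ref{VDBP} via a Grothendieck composite-functor spectral sequence --- is precisely the route the paper was \emph{unable} to make work. Look at the ``Question'' placed immediately after Corollary \ref{VDBCY}: the authors write down the same underived factorization $\Hom_{\U(A)^e}(\U(A),-)=\Hom_{\U(A)^{op}}(A,\Hom_{A^e}(A,-))$ and then explicitly ask whether one can derive Corollary \ref{VDBCY} from Proposition \ref{VDBP} without invoking \cite{Chem99}, noting that $\Hom_{A^e}(A,-)$ does not preserve injectives. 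Your assertion that ``$\Hom_{A^e}(A,-)$ carries them to objects acyclic for $\Hom_{\U(A)^{op}}(A,-)$'' is exactly the missing step; flatness of $\U(A)^e$ over $A^e$ gives you injectivity over $A^e$, but says nothing about the right $\U(A)$-module you land in after applying $\Hom_{A^e}(A,-)$. You yourself flag this as the main obstacle, so the proposal is honest --- but be aware that the authors regard it as open.

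The paper's actual proof is just your first ``sidestep'': it identifies $\U(A)$ with the algebra of generalized differential operators $\mathcal D(\mathcal L_X)$ for $\mathcal L_X=\widetilde{\Omega_A}$, matches the objects (anchor map, rank $d_{\mathcal L_X}=\ell$, $\wedge^\ell\mathcal L_X^*=\widetilde{\omega_A^*}$, and $\omega_X=\omega_A$), and invokes \cite[Theorem 4.4.1]{Chem99} verbatim to get $\U(A)\otimes_A\Hom_A(\omega_A^*,\omega_A)[2\ell]=\U(A)\otimes_A\mathcal L_A[2\ell]$. The CY specialization is then the one-line computation you give. So if you are willing to take Chemla's theorem as a black box, your proposal reduces to the paper's proof; your change-of-rings argument, as written, does not stand on its own.
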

\begin{proof}
We compare the notations with \cite{Chem99} concerning the Lie-Rinehart pair $(A,\Omega_A)$. The affine variety is $X=\Spec(A)$. The sheaf of Lie algebras is given by $\mathcal L_X=\widetilde{\Omega_A}$, which is a locally free $\mathcal O_X$-module of rank $d_{\mathcal L_X}=\ell$. The anchor map $\omega: \mathcal L_X\to \Theta_X$ is given by $da\mapsto \{a,-\}$ for any $a\in A$. The sheaf of differential operators $\mathcal D(\mathcal L_X)$ is the $\mathcal O_X$-algebra $\widetilde{\U(A)}$ and we have
\[
\wedge^{d_{\mathcal L_X}} \mathcal L_X^*=\left(\wedge^\ell \Hom_A(\Omega_A,A)\right)^{\widetilde{}}=\widetilde{\omega_A^*}.
\]
Note that both $\wedge^{d_{\mathcal L_X}} \mathcal L_X^*$ and $\omega_X=\omega_A$ are $\mathcal D(\mathcal L_X)$-modules, where any $D\in \mathcal L_X$ acts as a Lie derivation by adjoint action. Hence we apply \cite[Theorem 4.4.1]{Chem99} to conclude that the rigid dualizing complex of $\U(A)$ is given by
\[\U(A)\otimes_A \mathcal \Hom_A(\omega_A^*,\omega_A)\,[2\ell]=\U(A)\otimes_A \mathcal L_A\,[2\ell].\]

Now suppose $A$ is CY. By the comment above \Cref{VDBCY}, $\mathcal L_A\cong \!^{2\delta}A$ as left Poisson $A$-modules, where $\delta$ is the modular derivation of $A$. Hence we have
\[
\U(A)\otimes_A \mathcal L_A\,[2\ell]=\U(A)\otimes_A  \!^{2\delta}A\, [2\ell]=\U(A)^{-2\delta}\,[2\ell].
\]
\end{proof}

\begin{cor}\label{VDBCY}
For any affine smooth Poisson algebra $A$ of dimension $\ell$, we have
\[
\Ext_{\U(A)^e}^i(\U(A),\U(A)^e)=
\begin{cases}
0   &   i\neq 2\ell\\
\mathcal L_A^*\otimes_A\U(A)&  i=2\ell
\end{cases}
\]
as $\U(A)$-bimodules.
\end{cor}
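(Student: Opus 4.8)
The plan is to read \Cref{VDBCY} off from the rigid dualizing complex of $\U(A)$ computed in \Cref{RigidDCP}, using the dictionary between rigid dualizing complexes and the Van den Bergh condition recorded in \Cref{CYRigid}. The hypotheses are in place: by \Cref{PropAPE}, $\U(A)$ is a noetherian Auslander-regular domain, so it is left and right noetherian and has finite global dimension, hence finite injective dimension on both sides. Thus \Cref{VDBcondition} applies to $B=\U(A)$, and so does the first part of \Cref{CYRigid}.

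By \Cref{RigidDCP}, the rigid dualizing complex of $\U(A)$ is $V[2\ell]$ with $V:=\U(A)\otimes_A\mathcal L_A$. The key step is to check that $V$ is an \emph{invertible} $\U(A)$-bimodule, with inverse $V^{-1}=\mathcal L_A^*\otimes_A\U(A)$. Since $\mathcal L_A=\omega_A\otimes_A\omega_A$ is a line bundle over $A$, \Cref{L:APDer} gives, for each $p\in\Spec(A)$, an isomorphism of Poisson $A_p$-modules $(\mathcal L_A)_p\cong \!^{\delta_p}A_p$ for some $\delta_p\in\Der_P(A_p)$, while $(\mathcal L_A^*)_p\cong \!^{-\delta_p}A_p$ by \Cref{RankonePMod}. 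Using \Cref{Local}(a) to localize ($\U(A)_p=\U(A_p)$), together with the bimodule convention fixed before \Cref{VDBP}, $V$ localizes to $\!^{\sigma_p}\U(A_p)$ and $\mathcal L_A^*\otimes_A\U(A)$ localizes to $\U(A_p)^{\sigma_p}$, where $\sigma_p\in\Id_{\mathscr F}\U(A_p)$ is the automorphism attached to $\delta_p$ by \Cref{ModAut}; these two are visibly mutually inverse invertible $\U(A_p)$-bimodules. The evaluation morphisms
\[
V\otimes_{\U(A)}(\mathcal L_A^*\otimes_A\U(A))\longrightarrow\U(A),\qquad (\mathcal L_A^*\otimes_A\U(A))\otimes_{\U(A)}V\longrightarrow\U(A)
\]
are $\U(A)$-bimodule maps of $A$-modules which become isomorphisms after localizing at every $p\in\Spec(A)$, hence are isomorphisms; so $V$ is invertible with the asserted inverse. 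Carrying the left/right Poisson-module conventions correctly through this localization is the step that needs the most care, and is the main obstacle.

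With $V[2\ell]$ a rigid dualizing complex and $V$ invertible, the first part of \Cref{CYRigid} yields that $\U(A)$ satisfies the Van den Bergh condition with $d=2\ell$ and $U=V^{-1}=\mathcal L_A^*\otimes_A\U(A)$. Spelling out \Cref{VDBcondition} then gives exactly
\[
\Ext_{\U(A)^e}^i(\U(A),\U(A)^e)=\begin{cases}0 & i\neq 2\ell,\\ \mathcal L_A^*\otimes_A\U(A) & i=2\ell,\end{cases}
\]
as $\U(A)$-bimodules, which is \Cref{VDBCY}. One could instead try to extract the vanishing for $i\neq 2\ell$ and the shape $\U(A)\otimes_A\mathcal L_A^*$ of the top $\Ext$ directly from \Cref{VDBP}, but that would require the additional change-of-rings identification $\RHom_{\U(A)^e}(\U(A),\U(A)^e)\cong\RHom_{\U(A)^{op}}(A,\RHom_{A^e}(A,\U(A)^e))$; going through the rigid dualizing complex avoids this.
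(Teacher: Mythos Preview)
Your proposal is correct and follows essentially the same route as the paper: invoke the rigid dualizing complex from \Cref{RigidDCP}, apply the first part of \Cref{CYRigid}, and identify the Van den Bergh bimodule as the inverse of $\U(A)\otimes_A\mathcal L_A$. The paper's proof is terser—it cites \Cref{RankonePMod} directly for $(\U(A)\otimes_A\mathcal L_A)^{-1}=\mathcal L_A^*\otimes_A\U(A)$ rather than verifying invertibility by localization—but the strategy is identical, and your closing remark about avoiding the change-of-rings identification is exactly the content of the Question the paper poses immediately after \Cref{SCYP}.
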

\begin{proof}
Since $\U(A)$ is noetherian, we can apply \Cref{CYRigid} to obtain the Van den Bergh invertible bimodule $U=(\U(A)\otimes_A \mathcal L_A)^{-1}=\mathcal L^*_A\otimes_A \U(A)$ by \Cref{RankonePMod}.
\end{proof}

As a consequence, the Calabi-Yau property of the enveloping algebra $\U(A)$ follows from the unimodularity of $A$ when $A$ is CY.
\begin{cor}\label{SCYP}
Let $A$ be a CY Poisson algebra of dimension $\ell$. Then $\U(A)$ is skew CY of dimension $2\ell$. Moreover, the Nakayama automorphism of $\U(A)$ is given by $2\delta$, where $\delta$ is the modular derivation of $A$.
\end{cor}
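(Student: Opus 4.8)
The plan is to reduce everything to the second part of Proposition~\ref{CYRigid}, which says: if $B^e$ is noetherian of finite global dimension, then $B$ is skew CY of dimension $d$ precisely when $B$ admits a rigid dualizing complex $B^{\sigma}[s]$ with $\sigma\in\Aut(B)$, in which case $d=s$ and $\sigma^{-1}$ is the Nakayama automorphism of $B$. For $B=\U(A)$, Theorem~\ref{RigidDCP} already supplies such a complex: since $A$ is CY, the rigid dualizing complex of $\U(A)$ is $\U(A)^{-2\delta}[2\ell]$, where $-2\delta$ denotes the automorphism $\varphi(-2\delta)\in\Id_{\mathscr F}\U(A)$ attached to the Poisson derivation $-2\delta$ by Lemma~\ref{ModAut} and $\delta$ is the modular derivation of $A$. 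So one reads off $\sigma=\varphi(-2\delta)$ and $s=2\ell$, which will give dimension $2\ell$ and Nakayama automorphism $\varphi(-2\delta)^{-1}$.

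The first point I would verify is the hypothesis on the enveloping algebra of $\U(A)$. By Lemma~\ref{TensorU}, $\U(A)^e=\U(A)\otimes\U(A)^{op}=\U(A^e)$, where $A^e=A\otimes A^{op}$. Since $A$ is smooth affine and $A^{op}=A$ as commutative algebras, $A^e=A\otimes A$ is again smooth affine, of dimension $2\ell$; hence Corollary~\ref{PropAPE} applies to it and shows that $\U(A^e)$ is noetherian and Auslander-regular, in particular of finite global dimension. This verifies the standing assumptions of the second part of Proposition~\ref{CYRigid}; that $\U(A)$ itself is noetherian, also needed, is again Corollary~\ref{PropAPE}.

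It remains to identify $\varphi(-2\delta)^{-1}$ with the automorphism the statement calls $2\delta$, i.e.\ $\varphi(2\delta)$. This holds because $\varphi$ is a homomorphism from the additive group $\Der_P(A)$ into $\Aut(\U(A))$: from the formulas $\varphi(\delta')(M_a)=M_a$ and $\varphi(\delta')(H_a)=H_a+M_{\delta'(a)}$ in Lemma~\ref{ModAut} one checks $\varphi(\delta_1)\circ\varphi(\delta_2)=\varphi(\delta_1+\delta_2)$ on the generators $M_A,H_A$ of $\U(A)$, hence on all of $\U(A)$; in particular $\varphi(-2\delta)^{-1}=\varphi(2\delta)$. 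Assembling the pieces, $\U(A)$ is skew CY of dimension $2\ell$ with Nakayama automorphism $2\delta$.

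I do not expect a genuine obstacle here: Theorem~\ref{RigidDCP} and Proposition~\ref{CYRigid} carry the weight, and what remains is bookkeeping. The two places that demand care are confirming that smoothness of $A$ really does pass to $A^e$, so that Corollary~\ref{PropAPE} may legitimately be invoked for $\U(A^e)$, and keeping the inversion (and the sign) straight when converting the twist $-2\delta$ appearing in the rigid dualizing complex into the Nakayama automorphism $+2\delta$; a cross-check against Corollary~\ref{VDBCY}, which directly exhibits the Van den Bergh invertible bimodule $\mathcal L_A^*\otimes_A\U(A)=\U(A)^{2\delta}$ in the CY case, would confirm the final sign.
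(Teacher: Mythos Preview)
Your proposal is correct and follows essentially the same route as the paper: invoke Theorem~\ref{RigidDCP} to obtain the rigid dualizing complex $\U(A)^{-2\delta}[2\ell]$, then apply Proposition~\ref{CYRigid} after noting that $\U(A)^e=\U(A^e)$ is noetherian of finite global dimension via Corollary~\ref{PropAPE}. Your write-up simply makes explicit the verification that $A\otimes A$ is again affine smooth and the sign bookkeeping $\varphi(-2\delta)^{-1}=\varphi(2\delta)$, both of which the paper leaves implicit.
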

\begin{proof}
We apply Proposition \ref{CYRigid} in terms of Theorem \ref{RigidDCP} using the fact that $\U(A)^e=\U(A\otimes A)$ is noetherian smooth.
\end{proof}

\begin{question}
Consider the following diagram.
\[
\xymatrix{
\U(A)^e\text{-Mod}\ar[rrrr]^-{\Hom_{\U(A)^e}(\U(A),-)\quad\quad}\ar[drr]_-{\Hom_{A^e}(A,-)} & &&& \text{Ab}\\
&& \U(A)^{op}\text{-Mod}\ar[rru]_-{\quad\quad \Hom_{\U(A)^{op}}(A,-)}&&
}.
\]
It is clear to check that
\[
\Hom_{\U(A)^e}(\U(A),M)=\Hom_{\U(A)^{op}}(A,\Hom_{A^e}(A,M)),
\]
for any $\U(A)$-bimodule $M$. But the functor $\Hom_{A^e}(A,-)$ does not preserve the injective objects in general. Is it possible to derive \Cref{VDBCY} from \Cref{VDBP} directly without using the general results in \cite{Chem99}?
\end{question}

Now we can state the twisted Poincar\'e duality for Hochschild homology and cohomology regarding Poison bimodules.
\begin{cor}\label{VDBDualityH}
Let $A$ be an affine smooth Poisson algebra of dimension $\ell$. For any Poisson $A$-bimodule $M$, we have
\begin{align*}
\HH^i(M)=\HH_{2\ell-i}(\mathcal L_A^*\otimes M),
\end{align*}
where $\HH_i(M)$ (resp. $\HH^i(M)$) denotes the $i$-th Hochschild homology (resp. cohomology) of $\U(A)$ with coefficients in $M$. Moreover if $A$ is CY, then
\begin{align*}
\HH^i(M)=\HH_{2\ell-i}(\!^{-2\delta}M),
\end{align*}
where $\delta$ is the modular derivation of $A$.
\end{cor}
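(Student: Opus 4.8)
The plan is to deduce this from the Van den Bergh duality between Hochschild homology and cohomology of $\U(A)$, for which the enabling input—that $\U(A)$ satisfies the Van den Bergh condition—has already been recorded in Corollary~\ref{VDBCY}. Recall that for a $\U(A)$-bimodule $M$ one has $\HH^i(M)=\Ext^i_{\U(A)^e}(\U(A),M)$ and $\HH_i(M)=\Tor_i^{\U(A)^e}(\U(A),M)$. First I would check that $\U(A)$ is homologically smooth: since $A$ is affine smooth, so is $A^e=A\otimes A^{op}$ as a commutative algebra, hence $\U(A)^e=\U(A^e)$ is noetherian of finite global dimension by Corollary~\ref{PropAPE} applied to $A^e$, and therefore $\U(A)$ admits a finite resolution by finitely generated projective $\U(A)^e$-modules. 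Together with Auslander-regularity this also gives finite injective dimension of $\U(A)$ on both sides, so the hypotheses of the duality theorem are met.

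Next I would invoke the general duality theorem of Van den Bergh \cite{VDB2}: if $B$ is homologically smooth, of finite injective dimension on both sides, and $\Ext^i_{B^e}(B,B^e)$ vanishes for $i\neq d$ and equals an invertible bimodule $U$ for $i=d$, then $\HH^i(B,M)\cong\HH_{d-i}(B,U\otimes_B M)$ for every $B$-bimodule $M$ and every $i$. Applying this with $B=\U(A)$, $d=2\ell$, and—by Corollary~\ref{VDBCY} together with Lemma~\ref{RankonePMod}—with $U=\mathcal L_A^*\otimes_A\U(A)$ invertible, we obtain $\HH^i(M)\cong\HH_{2\ell-i}\bigl((\mathcal L_A^*\otimes_A\U(A))\otimes_{\U(A)}M\bigr)$. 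A short bookkeeping of the bimodule structures, using the conventions fixed before Proposition~\ref{VDBP} and the comment below Lemma~\ref{EquivMod} to switch sides of the Poisson-module structure on $\mathcal L_A^*$, identifies $(\mathcal L_A^*\otimes_A\U(A))\otimes_{\U(A)}M$ with $\mathcal L_A^*\otimes_A M$ as $\U(A)$-bimodules, which yields the first formula.

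For the Calabi-Yau case I would substitute the identification recorded above Theorem~\ref{RigidDCP}: when $A$ is CY, $\mathcal L_A\cong\!^{2\delta}A$ as left Poisson modules, so by Lemma~\ref{RankonePMod}(e) (or Corollary~\ref{P:PMTwist}(b)) $\mathcal L_A^*\cong\!^{-2\delta}A$ as left Poisson modules. Then, exactly as in the last step of the proof of Corollary~\ref{C:TwistD}, tensoring a $\U(A)$-bimodule $M$ on the left with the line bundle $\!^{-2\delta}A$ twists only the left $\U(A)$-module structure of $M$ by $-2\delta$, i.e.\ $\mathcal L_A^*\otimes_A M\cong\!^{-2\delta}M$ as $\U(A)$-bimodules; substituting into the first formula gives the second.

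The main obstacle I anticipate is not the homological input—that is essentially packaged in Corollary~\ref{VDBCY} and the homological smoothness of $\U(A)$—but the careful tracking of left/right module structures through the two identifications $(\mathcal L_A^*\otimes_A\U(A))\otimes_{\U(A)}M\cong\mathcal L_A^*\otimes_A M$ and $\mathcal L_A^*\otimes_A M\cong\!^{-2\delta}M$, since the Poisson-module structure of $\mathcal L_A^*$ may sit on either side and one must apply the side-switching convention consistently; one should also double-check the sign of the twisting derivation against the convention used in Corollary~\ref{P:PMTwist} so that it comes out to $-2\delta$ rather than $+2\delta$.
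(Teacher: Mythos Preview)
Your proposal is correct and follows essentially the same route as the paper: apply Van den Bergh's duality theorem \cite[Theorem 1]{VDB2} with the Van den Bergh condition for $\U(A)$ supplied by Corollary~\ref{VDBCY}. The paper's own proof is a single sentence to this effect; your version simply makes explicit the verification of hypotheses (homological smoothness and finite injective dimension via Corollary~\ref{PropAPE} applied to $A^e$), the identification $(\mathcal L_A^*\otimes_A\U(A))\otimes_{\U(A)}M\cong\mathcal L_A^*\otimes_A M$, and the CY specialization via $\mathcal L_A^*\cong\!^{-2\delta}A$, all of which the paper leaves to the reader.
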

\begin{proof}
The result is an application of \cite[Theorem 1]{VDB2} where we use the Van den Bergh condition in terms of Lemma \ref{VDBCY}.
\end{proof}

\begin{thm}\label{EqHUCY}
Let $A$ be a CY Poisson algebra of dimension $\ell$. Then the following are equivalent.
\begin{enumerate}
\item $A$ is unimodular.
\item The module class of $A$ is zero.
\item $\omega_A\cong A$ as left Poisson modules over $A$.
\item $\omega_A^*\cong A$ as right Poisson modules over $A$.
\item $\Ext_{\U(A)}^\ell (A,\U(A))\cong A$ as right Poisson modules over $A$.
\item $\Ext_{\U(A)^{op}}^\ell (A,\U(A))\cong A$ as left Poisson modules over $A$.
\item $\Ext_{A^e}(A,A^e)\cong A$ as right Poisson modules over $A$.
\item $\U(A)$ is CY of dimension $2\ell$ provided that for any $u\in A^\times$, $\sqrt{u}$ exits.
\end{enumerate}
\end{thm}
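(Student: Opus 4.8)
The plan is to funnel conditions (a)--(g) through the single statement ``the modular derivation $\delta$ of $A$ is log-Hamiltonian,'' and then handle (h) separately using Corollary~\ref{SCYP}. Since $A$ is CY, Proposition~\ref{ComCY} gives $\omega_A\cong A$ as $A$-modules, so by Lemma~\ref{L:APDer} we may write $\omega_A\cong {}^{\delta}A$ as left Poisson modules, with $\delta$ the modular derivation. Hence (c) holds precisely when ${}^{\delta}A\cong A$, i.e.\ when $\delta$ is log-Hamiltonian, i.e.\ when the modular class vanishes in $\mathscr{HP}^1(A)$, which is (a); and (b) is the same statement phrased through the Poisson Picard group, cf.\ Definition~\ref{Hunimod} and Lemma~\ref{HomU}. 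Condition (d) is equivalent to (c) because $\omega_A^*=\Hom_A(\omega_A,A)$ and Lemma~\ref{RankonePMod} gives $\omega_A\cong A$ as left Poisson modules iff $\omega_A^*\cong A$ as right Poisson modules. This also disposes of (g): by \eqref{SerreB} and the discussion in Section~\ref{S:HomM}, $\Ext_{A^e}^{i}(A,A^e)$ vanishes for $i\ne\ell$ and equals $\omega_A^*$ as a right Poisson $A$-module for $i=\ell$, so (g) $\Leftrightarrow$ (d). For (e), Proposition~\ref{ExtEq} identifies $\RHom_{\U(A)}(A,\U(A))$ with $\RHom_{A^e}(A,A^e)=\omega_A^*[-\ell]$ in $\text{D}^{\text{b}}(A^{op}\text{-PMod})$, so $\Ext_{\U(A)}^{\ell}(A,\U(A))\cong\omega_A^*$ as right Poisson modules and (e) $\Leftrightarrow$ (d). For (f), observe that $A^{op}$ is again a CY Poisson algebra with modular derivation $-\delta$, that $\U(A)^{op}=\U(A^{op})$ by Lemma~\ref{TensorU}, and that a right Poisson $A^{op}$-module is the same thing as a left Poisson $A$-module; applying the already-established equivalence (e)$\Leftrightarrow$(a) to $A^{op}$ then shows (f) holds iff $-\delta$, equivalently $\delta$, is log-Hamiltonian, i.e.\ iff (a) holds.

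It remains to prove (a)$\Leftrightarrow$(h). By Corollary~\ref{SCYP}, $\U(A)$ is skew CY of dimension $2\ell$ with Nakayama automorphism the automorphism $\varphi(2\delta)\in\Id_{\mathscr F}\U(A)$ associated to the Poisson derivation $2\delta$; hence, by Definition~\ref{CY} and the bijection \eqref{DerInner}, $\U(A)$ is CY iff $\varphi(2\delta)$ is an inner automorphism, and by the correspondence among items (a)--(c) of Lemma~\ref{ModAut} this holds iff $2\delta$ is log-Hamiltonian. If $A$ is unimodular, then $\delta=v^{-1}\{v,-\}$ for some $v\in A^\times$, and the identity $u^{-1}\{u,-\}+w^{-1}\{w,-\}=(uw)^{-1}\{uw,-\}$ gives $2\delta=(v^2)^{-1}\{v^2,-\}$, so $2\delta$ is log-Hamiltonian and $\U(A)$ is CY; note this implication needs no hypothesis on square roots. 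Conversely, if $\U(A)$ is CY then $2\delta=u^{-1}\{u,-\}$ for some $u\in A^\times$; choosing $w\in A$ with $w^2=u$ (such $w$ is automatically a unit of $A$, since $w\cdot w u^{-1}=1$), the Leibniz rule gives $\{u,-\}=\{w^2,-\}=2w\{w,-\}$, hence $2\delta=w^{-2}\cdot 2w\{w,-\}=2w^{-1}\{w,-\}$, and dividing by $2$ in characteristic zero yields $\delta=w^{-1}\{w,-\}$, so $A$ is unimodular.

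The only real obstacle is this last converse: the fact that $2\delta$ being log-Hamiltonian does not by itself force $\delta$ to be log-Hamiltonian, since $\mathscr{HP}^1(A)$ may carry $2$-torsion; the hypothesis that every unit of $A$ admits a square root is genuinely needed precisely to ``halve'' the log-Hamiltonian derivation $2\delta$. Everything else is routine bookkeeping with Lemma~\ref{L:APDer}, Proposition~\ref{ExtEq}, Lemma~\ref{RankonePMod}, Lemma~\ref{ModAut}, and Corollary~\ref{SCYP}, together with the remark that passing to the opposite Poisson algebra negates the modular derivation.
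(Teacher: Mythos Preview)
Your proof is correct and follows essentially the same route as the paper: both arguments funnel (a)--(g) through the single condition that the modular derivation $\delta$ is log-Hamiltonian (via Lemma~\ref{L:APDer}, Proposition~\ref{ExtEq}, and the identification $\Ext_{A^e}^\ell(A,A^e)=\omega_A^*$), and both handle (a)$\Leftrightarrow$(h) by combining Corollary~\ref{SCYP} with Lemma~\ref{ModAut} and the square-root hypothesis. The only cosmetic difference is that for (e)$\Leftrightarrow$(f) the paper invokes Lemma~\ref{EquivMod} directly, whereas you pass to $A^{op}$ and reapply the (e)$\Leftrightarrow$(a) equivalence there; these are the same idea, and your added remark that (a)$\Rightarrow$(h) requires no square-root assumption matches the content of Remark~\ref{CYUni}.
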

\begin{proof}
(a)$\Leftrightarrow$(b) comes from definition. (c)$\Leftrightarrow$(d) and (e)$\Leftrightarrow$(f) follows from duality of left and right Poisson modules regarding Lemma \ref{EquivMod}. (d)$\Leftrightarrow$(e) is derived from Proposition \ref{ExtEq}. Lemma \ref{HomU} implies that (a)$\Leftrightarrow$(e) since $A$ is CY. By Definition \ref{LMD} and the comment below Lemma \ref{LMDF}, we know $\Ext_{A^e}(A,A^e)=\omega_A^*$ as right Poisson modules, hence (d)$\Leftrightarrow$(g). Finally, it suffices to prove (a)$\Leftrightarrow$(h). By Corollary \ref{SCYP}, we know $\U(A)$ is skew CY with Nakayama automorphism given by $2\delta$ in the sense of Lemma \ref{ModAut}, where $\delta$ is the modular derivation of $A$. In this case by \Cref{CY}, $\U(A)$ is CY if and only if $2\delta$ is given by some inner automorphism of $\U(A)$ if and only if $2\delta=u^{-1}\{u,-\}$ for some $u\in A^\times$ if and only if $\delta=v^{-1}\{v,-\}$ for $v=\sqrt{u}$ if and only if $A$ is unimodular.
\end{proof}

\begin{remark}\label{CYUni}
For a polynomial Poisson algebra $A=\k[x_1,\dots,x_n]$, its units are given by $A^\times=\k^\times$. Thus we know a polynomial Poisson algebra is unimodular if and only if its enveloping algebra is CY.  Generally speaking, by the proof of \Cref{EqHUCY}, one sees that the unimodularity of the Poisson structure always implies the CY property of its enveloping algebra, but the inverse direction may not hold.
\end{remark}

\begin{Ack}
The second author is grateful for the hospitality of the first author at Zhejiang Normal University summer 2016 during the time the paper is written . The authors want to thank James Zhang, Xiaolan Yu and Guisong Zhou for helpful suggestions and correspondences. This work was supported by the National Natural Science Foundation of China (Grant Nos. 11571316, 11001245) and the Natural Science Foundation of Zhejiang Province (Grant No. LY16A010003).
\end{Ack}

\medskip

\end{document}